\documentclass[11pt]{amsart}
\usepackage[T1]{fontenc}
\usepackage[text={6in,8.5in},centering]{geometry}
\usepackage[colorlinks=true,citecolor=black!60!green,linkcolor=black!60!red,filecolor=black!60!cyan,urlcolor=black!60!magenta]{hyperref}
\usepackage{xcolor}
\usepackage{amssymb,amsmath,amsthm,mathtools,extpfeil}
\usepackage[all,cmtip]{xy}
\usepackage{tikz,tikz-3dplot}
\usepackage[shortlabels]{enumitem}

\usepackage{caption}
\usepackage{subcaption}

\newtheorem{thm}[equation]{Theorem}
\newtheorem*{thm*}{Theorem}
\newtheorem{thmA}{Theorem}

\newtheorem{lem}[equation]{Lemma}
\newtheorem{prop}[equation]{Proposition}
\newtheorem{cor}[equation]{Corollary}

\theoremstyle{definition}

\newtheorem*{rmk}{Remark}

\newtheorem*{qn}{Question}

\newtheorem{ex}[equation]{Example}

\numberwithin{equation}{section}

\DeclareMathOperator{\MC}{MC}
\DeclareMathOperator{\rk}{rk}
\DeclareMathOperator{\Sign}{sign}

\DeclareMathOperator{\ch}{ch}
\DeclareMathOperator{\ev}{ev}
\DeclareMathOperator{\U}{U}
\DeclareMathOperator{\SU}{SU}
\DeclareMathOperator{\SO}{SO}
\DeclareMathOperator{\SL}{SL}
\DeclareMathOperator{\GL}{GL}
\DeclareMathOperator{\Sp}{Sp}
\DeclareMathOperator{\Spin}{Spin}
\DeclareMathOperator{\Sym}{Sym}

\newcommand{\ZZ}{\mathbb{Z}}
\newcommand{\QQ}{\mathbb{Q}}
\newcommand{\RR}{\mathbb{R}}
\newcommand{\CC}{\mathbb{C}}

\newcommand*\strong[1]{\textbf{\textit{#1}}}

\makeatletter
\let\@wraptoccontribs\wraptoccontribs
\makeatother

\begin{document}
\title{Morse complexity of homology classes}
\author[F.~Manin]{Fedor Manin}
\author[B.~Tshishiku]{Bena Tshishiku}
\author[Sh.~Weinberger]{Shmuel Weinberger}
\address[F.~Manin]{Department of Mathematics, University of Toronto, ON, Canada}
\email{manin@math.toronto.edu}
\address[B.~Tshishiku]{Department of Mathematics, Brown University, Providence, RI, United States}
\email{bena\_tshishiku@brown.edu}
\address[Sh.~Weinberger]{Department of Mathematics, University of Chicago, IL, United States}
\email{shmuel@math.uchicago.edu}
\begin{abstract}
  The \emph{Morse complexity} of a manifold is the minimal number of handles
  required to build it.  We explore the Morse complexity of manifolds,
  bordisms, and homology classes, proving nontrivial upper bounds using
  surgery theory and lower bounds using index theory.  Our most involved result
  shows that for Lie groups which admit discrete series representations, the
  Morse complexity of their locally symmetric spaces grows linearly with
  volume.  This implies that such locally symmetric spaces do not admit open
  book decompositions.
\end{abstract}
\maketitle

\section{Introduction}

One way to measure the complexity of a smooth manifold $M$ is by the minimal number of critical points of a Morse function on $M$; equivalently, the number of handles needed to build $M$, starting from its boundary if it has one. We call this the Morse complexity of $M$, and denote it by $\MC(M)$.  Note that this is not a conventional complexity measure in that infinitely many manifolds can have the same Morse complexity.  This means, for example, that whether it grows without bound for a given class of manifolds can be an interesting question.

Gromov suggested studying the analogue of this invariant for homology classes, defined as follows.  If the homology class $\alpha \in H_i(X; \ZZ)$ is represented by a manifold mapping in, i.e.\ there is an $f:M \to X$ with $f_*([M]) = \alpha$, then we will say that $\MC(\alpha) \leq \MC(M)$.  From the work of Thom, we know that not every $\alpha$ is of this form, but that $k\alpha$ always is for some positive $k$. So, working rationally, we set
\[\MC(\alpha) = \inf \{\MC(M)/k \mid \text{there is an $f:M \to X$ with }f_*([M]) = k\alpha \}.\]
Note that even if the original $\alpha$ was the fundamental class of a manifold cycle, the Morse complexity of $\alpha$ may well be smaller, because multiples might simplify.  Indeed they always simplify at least a little, since connected sum has smaller complexity than disjoint union.

In dimension $2$, Morse complexity of homology classes is proportional to their simplicial norm. Our goal in this paper is to study Morse complexity in higher dimensions, still motivated by the analogy with simplicial norm and questions of Gromov outlined in \cite[\S8$\frac{1}{2}$]{GrPC} and \cite[\S3.2]{GrSing}.  We will show that this notion is trivial in odd dimensions (as Gromov had speculated), and is nontrivial in all even dimensions (as Gromov knew).

Our main theorems are strongly motivated by some of the results of Gromov in \cite{GrVol}, but the proofs are very different: the upper bounds rely on surgery theory and the work of Quinn on open book decompositions \cite{Quinn} and the lower bounds rely on index theory. They also contrast strongly with the nontriviality of simplicial norm is all dimensions $>1$.
\begin{thmA} \label{intro:vanish} \ 
  \begin{enumerate}[(i)]
  \item The Morse complexity of homology classes vanishes in all odd dimensions.
  \item The Morse complexity of $\alpha \in H_d(M)$ depends only on its image
    in $H_d(K(\pi_1(M), 1))$.
  \end{enumerate}
\end{thmA}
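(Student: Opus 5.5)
Both parts will come from one construction: modify a representing manifold $f\colon M\to X$ by bordism over $X$, without changing $f_*[M]$, until it has bounded Morse complexity, and only then pass to multiples. The basic tool is surgery below the middle dimension. Given $f\colon M^d\to X$, I first do $0$- and $1$-surgeries to make $M$ connected and $f_*\colon\pi_1(M)\to\pi_1(X)$ an isomorphism; this is possible when $\pi_1(X)$ is finitely presented, which we may arrange by passing to a finite subcomplex. The traces of these surgeries are bordisms over $X$, so $f_*[M]$ is unchanged. The point is that after surgery $M$ is "as simple as $X$ allows" below the middle dimension, but the middle-dimensional and higher handles remain uncontrolled. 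The remaining work is to kill those handles using multiples.

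\textbf{Part (ii).} Let $c\colon X\to B\pi=K(\pi_1(X),1)$ be the classifying map. Suppose $\alpha,\beta\in H_d(X)$ have the same image in $H_d(B\pi)$. Then $\alpha-\beta$ lies in the kernel of $c_*$. I claim this kernel is represented rationally by manifolds $N\to X$ whose composite to $B\pi$ is null-bordant. Rationally, oriented bordism of $X$ is $H_*(X;\QQ)\otimes\Omega_*\otimes\QQ$, so a rational class in the kernel is, after multiplying by $k$, realized by $g\colon N\to X$ such that $c\circ g$ extends over a bordism $W\to B\pi$. I then want to replace $M\to X$, which represents $k\alpha$, by $M\# N$ (or a disjoint union, which I connect by $1$-handles), and bound the complexity cost of adding $N$.

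The key claim is the following: a manifold $N\to X$ whose image in $B\pi$ is null-bordant can be traded, up to bordism over $X$, for one built from the same handle structure as a manifold over $B\pi$ plus a bounded number of extra handles. The extra handles come from surgery on the null-bordism $W$. The handles of $W$ of index at least $2$ can be realized in $X$, since $\pi_1$ matches; the issue is that they carry homotopy classes of $X$ that do not map to anything in $B\pi$.

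I think a cleaner route for (ii) is to show directly that $\MC(\alpha)$ can be computed from a bordism over $B\pi$. Given $M\to B\pi$ representing $k\,c_*\alpha$ with few handles, I lift it to $X$ by compressing the map. The map $M\to B\pi$ lifts to $X$ on the $2$-skeleton of a handle decomposition, since $X\to B\pi$ is $2$-connected. For the higher handles, the obstructions lie in $H^{j}(M;\pi_{j-1}(F))$, where $F$ is the homotopy fiber; rationally these are handled by taking covers or multiples. The handle count does not increase, because the lift uses the same handles. The lifted class differs from $k\alpha$ by an element of $\ker c_*$, and I absorb that element by a further rational surgery argument.

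This is the hardest step. To absorb the difference I would try showing that classes in $\ker c_*$ are rationally represented by manifolds mapping to $X$ through simply-connected pieces. Such pieces can be glued to $M$ via connected sum, and repeated multiples let Quinn-type cancellation keep the cost sublinear in $k$.

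\textbf{Part (i).} For odd $d$, I use Quinn's theorem that odd-dimensional manifolds, with the appropriate fundamental group hypothesis, admit open book decompositions. It applies here because the asymmetric signature obstruction vanishes in odd dimensions. So $M\to X$ is bordant over $X$ to $M'=\partial(V\times D^2)\cup\text{(mapping torus of the page)}$, in the form $F\times_\phi S^1\cup \partial F\times D^2$. Next I take the $k$-fold cyclic cover of the $S^1$ direction, giving monodromy $\phi^k$; this represents $k$ times the class, possibly after correcting the binding part. Its handle count is bounded by twice the number of handles of the page $F$ plus a constant, independent of $k$. Dividing by $k$ and letting $k\to\infty$ gives $\MC(\alpha)=0$. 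The main obstacle is ensuring the branched cyclic cover still maps to $X$ representing $k\alpha$; since the binding has codimension $2$, the branched cover maps to $M$ with degree $k$, and composing with $f$ gives exactly $k\alpha$.
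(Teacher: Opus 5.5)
Your part (i) is correct and is essentially the paper's Proposition~\ref{prop:openbook}. By Lawson's theorem every closed odd-dimensional $M$ is an open book; no fundamental-group hypothesis or asymmetric-signature input is needed. The $k$-fold cyclic branched cover along the binding has degree $k$ over $M$ and at most twice as many handles as the page, so $\MC(\alpha)\le C/k\to 0$. The detour through a bordism to $M'$ and ``correcting the binding part'' is unnecessary.

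Part (ii), however, has a genuine gap. Your first reduction is the right one. If $N\to X$ represents (a multiple of) $\beta-\alpha$ and is rationally null-bordant over $B\pi$ (a framed representative achieves this), it suffices to show that multiples of $[N]$ are represented with uniformly bounded, or sublinear, Morse complexity. But that is the entire content of (ii), and none of your routes establishes it.

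The lifting route breaks at the obstruction theory. The obstructions in $H^j(M;\pi_{j-1}F)$ need not be torsion, and covers or multiples do not kill them. For instance, $c\colon X=T^6\#\CC P^3\to T^6$ is a $\pi_1$-isomorphism, yet no finite cover $\hat T\to T^6$ lifts to $X$. A lift would have nonzero degree and so would inject $H^2(X;\QQ)\cong\QQ^{16}$ into $H^2(\hat T;\QQ)\cong\QQ^{15}$. Even granting a lift, you are left absorbing an element of $\ker c_*$, which is the original problem, so this route is circular.

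The ``simply-connected pieces'' idea is false for $d\ge 4$; in $d=2$ it is Hopf's theorem. A map from a simply connected manifold factors through $\tilde X$, so it only reaches the image of $H_d(\tilde X)\to H_d(X)$. But for $X=\Sigma_g\times S^{d-2}$, the class $[X]$ lies in $\ker c_*$ while $H_d(\tilde X;\QQ)\cong H_d(S^{d-2};\QQ)=0$.

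The missing input is surgery-theoretic, and it is what the paper uses.
\begin{enumerate}[(a)]
\item Represent multiples of homology classes by framed manifolds (rationally $\pi^s_d(X)\cong H_d(X)$). Then the higher signature class $c_*f_*(L(M)\cap[M])$ is just the image of the class in $H_d(B\pi;\QQ)$.
\item After surgery to make $\pi_1(M)\cong\pi_1(X)$, Ranicki's asymmetric signature, which is an image of the higher signature class, is exactly the obstruction (Quinn, Ranicki) to a bordism class over $X$ containing an open book.
\item Hence for $d\neq 4$ the bordism classes with vanishing higher signature are generated by open books (Theorem~\ref{thm:sig}). Multiples of open-book classes have uniformly bounded Morse complexity by Lemma~\ref{lem:openbook}.
\item In dimension $4$ this is replaced by Kreck's stable diffeomorphism classification together with $S^2\times S^2$-stabilization, which gives $O(\log^2 n)$ growth.
\end{enumerate}
Without this or an equivalent input, your argument for (ii) does not go through.
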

This follows from a more fine-grained result about the Morse complexity of bordism classes, see \S\ref{S:bordism}.

Our next theorem is about locally symmetric manifolds, and indeed these are the only aspherical spaces above dimension $2$ for which we have nontrivial information. To state it, we need some terminology. If $\Gamma$ is a lattice in a semisimple group $G$, then $H^*(K \backslash G/\Gamma)$ has some elements which are represented by forms whose lift to the universal cover is $G$-invariant.  These classes form a subalgebra isomorphic to the cohomology of
the compact dual of $K\backslash G$, denoted $K\backslash G_u$; see Matsushima \cite{Mat}, and e.g.\ \cite[\S4.2]{Serre} for a useful exposition. We call these classes Matsushima classes.
\begin{thmA} \label{intro:main}
  Any homology class which pairs nontrivially with a Matsushima class
  (i.e.\ maps to a nontrivial class in $H_*(K\backslash G_u; \RR)$)
  has positive Morse complexity.
\end{thmA}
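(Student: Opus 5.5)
The plan is to obtain a lower bound of the form
\[
\MC(M)\ \ge\ c\cdot\bigl|\langle f^*\omega,[M]\rangle\bigr|
\]
for some fixed Matsushima class $\omega$ and every $f\colon M\to X=\Gamma\backslash G/K$. Such a bound is linear in $k$, so it survives division by $k$ and gives $\MC(\alpha)\ge c\,|\langle\omega,\alpha\rangle|>0$. The lower bound will come from index theory, with the Morse function used to cut the index down to a contribution from a bounded number of pieces.

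First, I reduce to the case where $\alpha$ has even dimension $d$. Odd-degree Matsushima classes pair trivially with everything one can hope to detect, and Theorem~\ref{intro:vanish}(i) is consistent with this: by that theorem the statement is only meaningful in even degrees. The next step is to realize the Matsushima classes through characteristic classes of flat-twisted bundles. The cohomology $H^*(K\backslash G_u)$ is generated by characteristic classes of the homogeneous bundles $G_u\times_K V$ for $K$-representations $V$. On $X$ the corresponding bundles $\Gamma\backslash G\times_K V$ carry $G$-invariant connections, and Chern--Weil theory for these connections produces the Matsushima classes. Consequently some product of such classes, paired against $\alpha$, is a Chern character pairing
\[
\langle \ch(E)\cup(\text{universal class}),\alpha\rangle\neq 0 .
\]
After twisting by the spinor bundle of $G/K$, this pairing can be rewritten as the index of a Dirac-type operator on $M$ twisted by $f^*E_V$. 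I would use the discrete-series (Atiyah--Schmid) framework: the $L^2$-index on $G/K$ of the Dirac operator twisted by $V$ is the formal degree of a discrete series representation. The net result should be that the pairing with $\alpha$ equals the index of a twisted Dirac operator $D_{f^*E}$ on $M$, perhaps up to a normalizing constant.

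Second, I bound that index by the handle count. Given a handle decomposition of $M$ with $N$ handles, I decompose $M$ into $N$ pieces, each a handle $D^j\times D^{d-j}$. Over each piece the bundle $f^*E$ is trivialized up to bounded data, because each handle is contractible. I then need a relative index or excision argument, in the spirit of Gromov--Lawson's relative index theorem: the index should split as a sum of local contributions, one per handle, each bounded by a constant depending only on $\rk E$ and $d$. One way to obtain this is to note that on a handle the twisted operator is homotopic, through operators with controlled boundary behaviour, to an untwisted one. The index then reduces to a count that is bounded once one controls how the flat structure (the holonomy of the flat part) interacts across the attaching regions.

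The main obstacle is this uniform per-handle bound. The connection pulled back by $f$ has unbounded curvature, since $f$ is arbitrary. It is therefore not the analytic index that is local; only the topological quantity is. To handle this, my first attempt would be to use the structure of $E$ as flat-for-$G$ but unitary-for-$K$. Concretely, I would replace $f$ by a map that is cellular with respect to the handle structure, so that $f$ sends the $j$-handles into a fixed $j$-skeleton of $X$. The characteristic integral would then be computed cell by cell: each top-dimensional handle maps to a single $d$-cell of a fixed triangulation of $X$ with degree bounded by the handle's own combinatorics. This would bound $|\langle\omega,f_*[M]\rangle|$ by $C\cdot\#\{\text{$d$-handles}\}$.

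The gap in this approach is that degrees need not be bounded by the combinatorics. Closing it is where the index theory and the discrete series should enter. The positivity of the formal degree gives a lower bound on the $L^2$-index density, whereas the contribution of each handle, computed through an $L^2$ index on its $\Gamma$-cover, is controlled by the number of attaching spheres. I expect making this $L^2$ handle-by-handle estimate rigorous to be the heart of the proof.
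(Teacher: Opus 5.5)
There is a genuine gap, and it is the step you yourself flag as the heart of the proof: a uniform per-handle bound on the index of a Dirac operator twisted by $f^*E_V$. For the bundles you use, $\Gamma\backslash G\times_K V$ with $V$ a $K$-representation, this bound is false, not merely unproven. These bundles are not flat, so the index is just the characteristic number $\langle \ch(f^*E_V)\hat A(M),[M]\rangle$. That is essentially the quantity you want to bound, and nothing ties it to the topology of $M$ alone. For example, let $f_n\colon T^2\to T^2$ have degree $n$ and let $L$ be a line bundle of degree one. The $\bar\partial$-operator twisted by $f_n^*L$ has index $n$ by Riemann--Roch, while $\MC(T^2)=4$. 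The same example refutes your cellular bound on $|\langle\omega,f_*[M]\rangle|$, as you suspected. Passing to $L^2$-indices on $\Gamma$-covers does not help: by Atiyah's $L^2$-index theorem these equal the ordinary indices. The Atiyah--Schmid formal degrees play no role in the actual argument.

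The missing idea is to twist the \emph{signature} operator by a \emph{flat} bundle. Take a finite-dimensional representation $V$ of $G$ preserving an indefinite Hermitian form, and restrict it to $\Gamma$ to get a local system $\mathcal L$ on $X$. Then $\Sign(M;f^*\mathcal L)$ is the signature of a form on $H^{d/2}(M;f^*\mathcal L)$. The twisted cellular cochain complex of a handle decomposition has rank $\rk\mathcal L$ per handle, so $|\Sign(M;f^*\mathcal L)|\le\rk\mathcal L\cdot\MC(M)$. Flatness makes the per-handle bound trivial, and no analysis on handles is needed.

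The twisted signature theorem (Atiyah, Meyer, Lusztig) identifies this signature, up to standard normalizations, with $\langle f^*\ch(\mathbb V^+-\mathbb V^-)\cup L(M),[M]\rangle$, where $V=V^+\oplus V^-$ is the definite splitting under $K$. The real content of the paper is then representation-theoretic. The Chern signatures $\gamma(V)=\ch(\mathbb V^+-\mathbb V^-)$ span a dense subspace of $H^{\ev}(BK)$ (Theorem \ref{thm:rational-surjectivity}, checked case by case with exterior powers and Adams operations). Composing with the surjection $H^*(BK)\to H^*(K\backslash G_u)$, any degree-$d$ Matsushima class $\omega$ is realized by a rational combination of local systems whose Chern signature vanishes below degree $d$. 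The $L(M)$ cross-terms then drop out, giving $\MC(M)\ge c\,k\,|\langle\omega,\alpha\rangle|$ for every representative of $k\alpha$.

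Discrete series enters only through Harish-Chandra's criterion $\mathrm{rank}(G)=\mathrm{rank}(K)$, which yields that surjectivity. When the criterion fails, no positive-degree Matsushima class comes from flat bundles and the method breaks down.
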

As a consequence of the theorem and its proof, we obtain the following topological conclusion:
\begin{cor}
  If $\pi_1M=\Gamma$ is a lattice in a semisimple group $G$, with
  $\chi(K\backslash G/\Gamma) \neq 0$, and a higher signature of $M$ associated
  to a Matsushima class is nonzero, then $M$ does not have an open book
  decomposition. In particular, such $K\backslash G/\Gamma$
  (e.g.\ even-dimensional hyperbolic manifolds) do not admit open book
  decompositions.
\end{cor}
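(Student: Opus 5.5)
The plan is to argue by contradiction. Suppose $M$ admits an open book decomposition: $M = (\Sigma \times [0,1])/\phi \,\cup\, (B \times D^2)$, where $\Sigma$ is a page with $\partial\Sigma = B$ and $\phi$ is a monodromy fixing the boundary. The first step is to show that the Morse complexity of $M$, and of every multiple of it, is uniformly bounded. This is the mechanism behind Theorem~\ref{intro:vanish}. Take a $k$-fold branched cyclic cover along the binding $B$; it is again an open book, with monodromy $\phi^k$, and it maps to $M$ with degree $k$. A single open book can be built with a number of handles depending only on the page: two copies of a handle decomposition of $\Sigma \times I$, glued via the mapping torus structure, plus handles for $B\times D^2$. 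So $\MC$ of the $k$-fold cover is at most $C(\Sigma)$, independent of $k$.

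Composing with the classifying map $M \to K\backslash G/\Gamma$ (via $\pi_1 M = \Gamma$), the image of $k[M]$ is represented by a manifold of bounded complexity. Hence
\[
\MC(f_*[M]) \le C(\Sigma)/k \to 0 .
\]

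The second step is to check that $f_*[M]$ pairs nontrivially with a Matsushima class. This is exactly the hypothesis that the associated higher signature $\langle L(M)\cup f^*\omega,[M]\rangle$ is nonzero. The way the lower bound in Theorem~\ref{intro:main} is proved (by index theory, twisting the signature operator by a flat bundle coming from $G$ and the compact dual) shows that the nonvanishing quantity is a higher signature, bounded by a constant times the Morse complexity. For this I would need to record from the proof of Theorem~\ref{intro:main} that the invariant controlling $\MC$ from below is a higher signature attached to a Matsushima class, not merely the pairing with $f_*[M]$ on the nose.

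The condition $\chi(K\backslash G/\Gamma)\ne 0$ guarantees that the discrete series exists, so the index-theoretic estimate of Theorem~\ref{intro:main} is available (equal rank of $G$ and $K$). Positivity of $\MC$ then contradicts the vanishing obtained in the first step.

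For the ``in particular'' clause, take $M = K\backslash G/\Gamma$ itself and the Matsushima class $\omega$ dual to the top class, that is, the volume form, which is $G$-invariant. The higher signature is then a nonzero multiple of $\mathrm{vol}$ times the Hirzebruch proportionality constant. That constant is the signature of the compact dual $K\backslash G_u$ in the appropriate sense, and it is nonzero exactly when $\chi\neq 0$: for even-dimensional hyperbolic manifolds the relevant proportionality factor is $\chi(S^{n})=2$.

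The main obstacle I expect is the first step: the uniform handle bound for the branched covers, and making precise that branched covering multiplies the fundamental class by $k$ after mapping to $K(\Gamma,1)$. The branched cover changes $\pi_1$, but the composite to $M$ and then to $B\Gamma$ still makes sense, which suffices.
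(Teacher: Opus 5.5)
Your Step~1 is correct as a statement about homology. Write $p:\tilde M_k\to M$ for the $k$-fold cyclic branched cover along the binding and $\tilde B$ for the preimage of the binding. These covers have Morse complexity bounded independently of $k$ and represent $k f_*[M]$, so the homology class $f_*[M]$ has Morse complexity zero.

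The gap is in connecting this to the hypothesis. A higher signature attached to a Matsushima class $\omega\in H^{m-4i}$ is $\langle L(M)\cup f^*\omega,[M]\rangle$. This is the pairing of $\omega$ with the higher signature class $f_*(L(M)\cap[M])$, and for $i>0$ it is not a pairing with $f_*[M]$ at all. So it is false that the hypothesis ``is exactly'' that $f_*[M]$ pairs nontrivially with a Matsushima class. Homology-level Morse complexity is genuinely blind to these invariants, because every rational homology class is stably represented by a framed manifold, whose $L$-class is trivial. If $f_*[M]=0$ in $H_m(B\Gamma;\QQ)$ but $\langle L_i(M)\cup f^*\omega,[M]\rangle\neq 0$, your two steps give no contradiction.

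The twisted-signature bound $|\Sign(N;\mathcal V)|\le\dim V\cdot\MC(N)$ does apply to each $\tilde M_k$. But to conclude $\Sign(\tilde M_k;\mathcal V)=k\Sign(M;\mathcal V)$ you need the \emph{higher signature class} of $\tilde M_k$, not just its fundamental class, to be $k$ times that of $M$. A degree-$k$ map does not give this; think of the degree-$d^2$ self-maps of $\CC P^2$, which preserve the signature. So the obstacle you flag at the end, scaling of the fundamental class, is the easy part; the real issue is the $L$-class.

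The paper avoids this by working in bordism over $X=K(\Gamma,1)$ rather than with degree-$k$ maps. Lemma~\ref{lem:openbook} gives representatives of $n[M]\in\Omega_m(X)$ of uniformly bounded Morse complexity. The twisted signature $\Sign(\,\cdot\,;\mathcal V)$ is additive on $\Omega_m(X)$, is determined by pairing $\gamma(V)$ with the higher signature class, and is bounded by $\dim V$ times Morse complexity. Hence $\Sign(M;\mathcal V)=0$ for every Hermitian representation $V$ of $G$. Theorem~\ref{thm:rational-surjectivity} and Corollary~\ref{cor:dense-image} then show that the $\gamma(V)$ rationally exhaust the Matsushima classes, so all Matsushima higher signatures of $M$ vanish.

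Alternatively, your branched covers can be rescued, but you would have to supply the argument. Since the binding has trivial normal bundle, $T\tilde M_k$ and $p^*TM$ differ only by re-clutching a trivial $\RR^2$-summand along $\tilde B\times S^1$. The clutching map $S^1\to\SO(2)$ is independent of the $B$-coordinate, so the discrepancy comes from $\widetilde{KO}(S^2)\cong\ZZ/2$ and vanishes rationally. This gives $L(\tilde M_k)=p^*L(M)$, and the higher signature class does scale by $k$.

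Finally, in the ``in particular'' clause the higher signature of the top-degree Matsushima class is just $\langle f^*\omega,[M]\rangle$, since only $L_0=1$ contributes. It is a nonzero multiple of the volume; no proportionality constant or ``signature of the compact dual'' enters, and your remark there conflates signature with Euler characteristic. The hypothesis $\chi\neq0$ is used only to guarantee equal rank (discrete series), which is what Theorem~\ref{thm:rational-surjectivity} needs. In this top-degree case your homology argument does work as written.
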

According to a theorem of Lawson \cite{Lawson} (proved earlier by Alexander \cite{Alex2} in dimension 3), all closed odd-dimensional manifolds have open book decompositions, so some condition on the locally symmetric space is necessary.

We will show, as part of the proof of Theorem \ref{intro:vanish}, that in fact Morse complexity of a bordism class depends only on higher signatures.  Even then, not every higher signature gives a lower bound on Morse complexity, as one can see by considering tori.  Nevertheless, in all even dimensions we obtain nontrivial lower bounds from higher signatures associated to Chern characters of representations into isometry groups of quadratic or symplectic forms.  That Matsushima classes have this property is shown in Lusztig's thesis \cite[\S5]{Lusztig} for $\Sp(2n,\ZZ)$; Lusztig uses this to prove the Novikov conjecture for these classes, among other cases.  We extend this, unfortunately by considering all possible cases.\footnote{Gromov also had the idea of using Lusztig's thesis, and knew the nonvanishing of Morse complexity for the fundamental classes of the same locally symmetric spaces to which Theorem \ref{intro:main} applies. It seems that for these classes Gromov had some method not based on taxonomy in mind.}

However, when $\chi(K\backslash G/ \Gamma) = 0$, none of the cohomology of the symmetric space comes from flat bundles (see \cite[Appendix]{Milnor}) and the method completely fails (as it also must in odd dimensions).

We also note that when Morse complexity is positive, one can ask which of the Betti numbers of the representing manifolds must be positive. The answer is that it is exactly the middle-dimensional one---indicating that signature is inherently related to this problem. Moreover, if one represents homology classes using slightly singular manifolds---say, by stratified spaces whose singular set is $2$-dimensional, and defining Morse complexity by the sum of
the number of cells in the singular set and the Morse complexity of the complement---then, while the theory is unchanged in dimension 2, it vanishes above dimension 4 (we do not know what happens in the four dimensional case).

This paper was originally conceived of as an appendix to the paper \cite{MW}, which studies the simplicial complexity of nullcobordisms of PL manifolds (with bounded local complexity). The simplest case of the idea of this paper is actually the following result, showing that there is no relationship between the Morse complexity of a manifold and its nullcobordisms:
\begin{thmA} \label{intro:ex}
  There an infinite sequence of manifolds homotopy equivalent to $\RR P^7$ and
  cobordant to it over $B\ZZ/2\ZZ$, all of which have the same Morse complexity,
  such that the Morse complexity of any corresponding sequence of cobordisms is
  unbounded.
\end{thmA}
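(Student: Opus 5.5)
The plan is to build the sequence out of fake $\RR P^7$'s distinguished by a $\rho$-invariant type quantity, namely an eta or Atiyah--Singer $\tilde\rho$-invariant for the $\ZZ/2$ representations, together with a bound relating that invariant to the handle count of a cobordism.

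\emph{Step 1: the manifolds.} By surgery theory (Wall realization, or the classical Browder--Livesay / Lopez de Medrano classification of homotopy $\RR P^{7}$'s), there are infinitely many smooth manifolds $M_j \simeq \RR P^7$ that are normally cobordant to $\RR P^7$ over $B\ZZ/2$. I would obtain them by acting with elements of $L_8(\ZZ/2)$ on $\RR P^7$ in its structure set. Their $\tilde\rho$-invariants $\rho(M_j)\in \QQ(\ZZ/2)$ take infinitely many values, growing linearly in $j$, because the $\rho$-invariant of an $L$-group action is the multisignature of the acting form, and that multisignature can be taken unbounded. Each $M_j$ is a closed 7-manifold. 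By Theorem \ref{intro:vanish}, or more concretely by Quinn's open book theorem, any odd-dimensional closed manifold with $\pi_1=\ZZ/2$ can be handle-decomposed with a number of handles bounded in terms of $\pi_1$ alone. I would use the fact that after handle trading in dimension $7$ one may cancel down to a minimal configuration determined by $\pi_1$ and the homotopy type. Concretely, the plan is to show that each $M_j$ admits a handle decomposition with one handle in each index $0,\dots,7$, like $\RR P^7$ itself. This should follow from Smale-style handle cancellation, since the cellular chain complex of the universal cover is the same as for $\RR P^7$ and the Whitehead torsion is trivial. So $\MC(M_j)=8$ for all $j$.

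\emph{Step 2: the lower bound for cobordisms.} Let $W$ be a cobordism over $B\ZZ/2$ from $M_j$ to $\RR P^7$, or to $M_0$. Pass to the double covers. The Atiyah--Patodi--Singer / $G$-signature formula gives
\[\rho(M_j)-\rho(\RR P^7) = \text{multisignature defect of } W,\]
which is $\Sign(\tilde W)-2\Sign(W)$ up to normalization. The key estimate is that $|\Sign(\tilde W)|$ and $|\Sign(W)|$ are bounded by the rank of $H_4$ of the corresponding spaces. Those ranks are in turn bounded by the number of $4$-handles of $W$, and hence by $2\,\MC(W)$ for $\tilde W$, since each handle of $W$ lifts to two handles. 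It follows that $\MC(W)\ge c\,|\rho(M_j)-\rho(\RR P^7)| - C$, which is unbounded.

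The main obstacle is Step 1's claim that the $M_j$ all have equal (minimal) Morse complexity. Bounding it uniformly is a consequence of the surgery-theoretic upper bounds of \S\ref{S:bordism}. If exact equality is hard, I would pass to an infinite subsequence: a uniform bound gives finitely many possible values, and some value must repeat infinitely often.
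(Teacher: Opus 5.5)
Your overall strategy matches the paper's. You take a Browder--Livesay family of fake $\RR P^7$'s; your Wall-realization construction is a fine way to produce it, and it even hands you the cobordisms over $B\ZZ/2$ directly. You then bound any cobordism $W$ from below via $2\Sign(W)-\Sign(\tilde W)$, which is exactly Hirzebruch's description of the Browder--Livesay invariant that the paper uses. Step 2 is correct: $H_4(\,\cdot\,;\QQ)$ vanishes for the ends and their double covers, so $\rk H_4(W;\QQ)$ and $\rk H_4(\tilde W;\QQ)$ are bounded by one and two times the number of $4$-handles.

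The gap is the step you flag yourself, equality of $\MC(M_j)$, and two of the justifications you offer there are wrong. Theorem \ref{intro:vanish}, Quinn's theorem and the bounds of \S\ref{S:bordism} say only that each odd-dimensional \emph{bordism class} has \emph{some} representative of bounded Morse complexity. They do not say that every closed odd-dimensional manifold with $\pi_1=\ZZ/2$ has bounded Morse complexity. For example, $\RR P^7\mathbin{\#}k(S^3\times S^4)$ needs at least $2k+8$ critical points by the $\ZZ/2$ Morse inequalities. So your ``uniform bound plus pigeonhole'' fallback has no input. Nor can you replace $M_j$ by a cheap bordant representative, since that loses the homotopy type $\RR P^7$ that the statement requires.

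What is actually needed is the simple-homotopy invariance of Morse complexity in dimensions $>5$, which is Theorem \ref{thm:she}. Since $\mathrm{Wh}(\ZZ/2)=0$, it gives $\MC(M_j)=\MC(\RR P^7)=8$.

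Your sentence about ``Smale-style handle cancellation'' points at this, but it is not an off-the-shelf fact once $\pi_1\neq 1$. The Whitney trick is unavailable for handles of index $\le 2$ or $\ge m-2$. Also, the relative chain complex is not acyclic, so this is not just the $s$-cobordism theorem. The paper proves it in four steps:
\begin{enumerate}
\item Take a self-indexing Morse function on $M$.
\item Embed the $2$-skeleton and the dual $2$-skeleton into $M'$ by general position.
\item Pull back the remaining handles (indices $3$ to $m-3$) one at a time. Wall's splitting theorem for triads makes the map transverse to each handle, and simply connected surgery on triads identifies each preimage as a handle.
\item Conclude with the $s$-cobordism theorem.
\end{enumerate}
With that theorem in place of your Step 1 justification, your argument goes through and coincides with the paper's.
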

The invariant is simply the signature of the two-fold cover of the cobordism minus the signature of the cobordism. This can be thought of as a signature of local system, as used in the proof of Theorem \ref{intro:main}; however, since it is associated to a finite group, general principles imply that it will vanish for closed manifolds---indeed, that is why it is independent of the
cobordism between a given pair of manifolds.  So Theorem \ref{intro:main} will rest on more sophisticated flat bundles.

We close this introduction by pointing out that we feel that this paper leaves much more mysteries than it resolves. As Gromov points out, we do not know whether the fundamental classes of even-dimensional negatively curved manifolds always have positive Morse complexity. We know nothing about the Morse complexity of fundamental classes of irreducible even-dimensional locally symmetric manifolds not covered by Theorem \ref{intro:main}.  We also don't know whether homology classes of locally symmetric manifolds that do not survive in the homology of their compact duals have nontrivial Morse complexity, at least above dimension $2$.

\subsection*{Acknowledgements}
We thank the referees of \cite{MW} for making comments that ultimately contributed to improving the exposition of this paper.  The third author also thanks Gromov for useful email correspondence about this problem. The authors are supported by an NSERC Discovery Grant and NSF grants DMS-2236705 and DMS-2505738.

\section{Main results}
In this section we define a number of variations on Morse complexity and prove our main results.  We start with some remarks on (non-stable) Morse complexity of manifolds.

\subsection{Morse complexity of manifolds}
For a simply connected manifold $M$ of dimension $>4$, Smale
\cite[Theorem 6.1]{Smale} showed that the homology of $M$ determines $\MC(M)$ in
the most obvious way: the number of $i$-dimensional critical points is
\[p(i)+q(i)+q(i-1),\]
where $p(i)$ is the rank of the $i$th homology and $q(i)$ is the minimal number
of generators for the torsion.  Whitehead torsion shows that this cannot be
exactly correct in the non--simply connected case, at least for relative Morse
complexity: a manifold with boundary has relative Morse complexity zero if and
only if it is a product $N \times [0,1]$; but the property of being a product is
obstructed by the torsion and therefore isn't purely homotopy theoretic.  On the
other hand, we have the following result (which might be known):
\begin{thm}
  If $m=dim(M) > 5$, and $f:M' \to M$ is a simple homotopy equivalence, then
  $\MC(M') = \MC(M)$. \label{thm:she}
\end{thm}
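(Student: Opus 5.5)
By symmetry (a simple homotopy equivalence has a simple homotopy inverse), it suffices to show $\MC(M') \le \MC(M)$ when $f: M' \to M$ is a simple homotopy equivalence of closed manifolds of dimension $m>5$. The strategy is to transport a minimal handle decomposition of $M$ to $M'$ through a regular neighborhood argument. This works at the level of thickenings of a single CW complex.

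Fix a minimal handle decomposition of $M$. Its handles are dual to a CW structure, but it is cleaner to split along the middle. I will choose a Morse function realizing $\MC(M)$ in which all handles of index $\le \lfloor m/2\rfloor$ come before the rest; rearranging handles by index does not change the count. This writes $M = W \cup_\partial W^*$, where $W$ consists of the handles of index $\le \lfloor m/2 \rfloor$, and the upside-down decomposition of $W^*$ has handles of index $\le \lceil m/2\rceil - 1 < m/2$. Each of $W$ and $W^*$ is a thickening of a CW complex $K$, respectively $L$, of dimension $\le m/2$, with one cell per handle.

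Next, compose the spine inclusion $K \to M$ with a homotopy inverse $g$ of $f$ to obtain a map $K \to M'$. Since $2\dim K \le m$, and strictly less than $m$ in the odd case, general position and the Whitney trick in the metastable range (valid since $m>5$ and $M'$ has the same $\pi_1$) let me homotope this map to an embedding. When $\dim K = m/2$ exactly, I expect to need the $\pi_1$-isomorphism and handle-by-handle embedding instead. A regular neighborhood then gives a codimension-$0$ submanifold $W' \subset M'$ carrying a handle decomposition with the same handles as $W$. The thickening is determined by the stable normal data, which agree because $f$ preserves stable tangent bundles up to the usual normal-invariant considerations. To stay safe, I would only claim that $W'$ has the same handle count, not that it is diffeomorphic to $W$.

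The complement $C = M' \setminus \mathrm{int}\, W'$ has boundary $\partial W'$. The claim to prove is that $C$ has a handle decomposition relative to that boundary, or dually from empty, with exactly as many handles as $W^*$. Since $f$ is a homotopy equivalence and $W'$ matches $W$ homotopically, duality shows that $(C, \partial W')$ has the same relative chain complex over $\ZZ[\pi_1]$ as $(W^*, \partial W)$. Because $f$ is simple, the Whitehead torsion of that comparison vanishes. The main step is then the handle-trading, cancellation and realization theorem of Smale and Wall in the non-simply connected setting (this is the $s$-cobordism machinery, which needs $m>5$, or in relative form $\dim \ge 6$). It says that a relative handle decomposition can be modified to realize any based free chain complex simple-equivalent to its own, provided the handles sit in indices from $2$ to $m-3$. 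Low and high indices require separate care: $0$- and $1$-handles are determined by $\pi_1$, and top handles are determined dually. I expect this step to be the main obstacle. In particular, matching exactly the number of handles, rather than just a simple-equivalent complex, requires the torsion to vanish on the nose, and it requires handling the boundary $\partial W'$, whose $\pi_1$ may differ when $m/2$ is small. If the split-in-half approach becomes messy, the fallback is to argue directly. Form $M' \times I$ with the handle structure induced from $M$ via the $h$-cobordism obtained by gluing along $f$, and use the $s$-cobordism theorem to push the handles through.
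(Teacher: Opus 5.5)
There is a genuine gap, and it is where you suspected. The claim that $C=M'\setminus\mathrm{int}\,W'$ carries a relative handle structure with as many handles as $(W^*,\partial W)$ is the whole content of the theorem, and the tools you cite cannot deliver it in your set-up. Relative to $\partial W'$, the handles of $C$ have indices from $\lfloor m/2\rfloor+1$ up to $m$. The top ones, dual to the $0$-, $1$- and $2$-handles of $W^*$, lie outside the range $2\le k\le m-3$ where trading, sliding and Whitney-trick cancellation work. So the realization theorem does not apply, and ``top handles are determined dually'' has no mechanism behind it.

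The missing idea, which is the paper's, is to use general position at \emph{both} ends and nowhere else. Take a self-indexing Morse function with a single $0$- and $m$-handle. Embed the image of the $2$-complex of handles of index $\le 2$, and likewise the $2$-complex of the dual Morse function, disjointly (possible since $2+2<m$). What remains is a simple homotopy equivalence $(A',\partial_\pm A')\to(A,\partial_\pm A)$ in which every piece has fundamental group $\pi$ and $A$ has handles only of index $3,\dots,m-3$. The paper then pulls these handles back one at a time. Wall's splitting theorem \cite[Theorem 12.1]{Wall} gives a simple homotopy equivalence of triads onto $(A;H,H^c)$, and simply connected surgery on triads shows that $f^{-1}H$ is an honest handle. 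What is left at the end is an $s$-cobordism, hence a product; this is where simplicity of $f$ is used.

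Two further points.

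First, for $m$ even your spine $K$ has dimension $m/2$, where general position only gives an immersion. Removing the double points by the Whitney trick requires Wall's self-intersection invariant $\mu$ of the $m/2$-cells to vanish. The metastable embedding theorems (Irwin) need a simply connected target in this range. Nothing you say shows $g\circ i$ is homotopic to an embedding, and the paper never embeds anything above the $2$-skeleton precisely to avoid this.

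Second, your fallback is not available. A simple homotopy equivalence does not produce an $h$-cobordism between $M$ and $M'$: if it did, the $s$-cobordism theorem would make them diffeomorphic. That fails, for example, for the Browder--Livesay fake projective spaces discussed later in the paper.

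Once the set-up is repaired as above, your idea of realizing the based chain complex of $A$ on $A'$ by handle introductions, slides and cancellations in indices $3$ to $m-3$ would be a legitimate alternative to the paper's handle-by-handle splitting, and it would avoid surgery on triads. But that realization lemma would then carry the entire proof and would need its own proof, including the stabilization it requires.
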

\begin{proof}
  As in the proof of the $h$-cobordism theorem, we can assume that $M$ is given
  a self-indexing Morse function (i.e.\ a Morse function so that critical points
  of index $k$ arise before ones of index $k+1$).  We also assume that there is
  just one critical point each of indices $0$ and $m$.

  We construct a Morse function on $M'$ as follows.  In index $\leq 2$ we mimic
  the $2$-complex $M_2$ associated to the Morse function of $M$.  That is, we
  put $f(M_2)$ in general position, turning it into an embedding, and then take
  a suitable function on its regular neighborhood.  We do the same thing with
  the dual Morse function $-f$, creating a Morse function on a disjoint
  submanifold.

  Now what remains is a simple homotopy equivalence (which we still call $f$)
  between two manifolds with boundary, say $(A', \partial_\pm A')$ and
  $(A, \partial_\pm A)$, where the manifolds and their boundaries all have
  fundamental group $\pi$ and $A$ is given a Morse function (locally constant on
  the boundary) whose critical points all have index between $3$ and $m-3$.
  This Morse function yields a relative handlebody structure on the manifold.

  Now we perform an induction on the handles: at each step, we find a handle in
  $A'$ whose image is the lowest handle of $A$, and generate a new simple
  homotopy equivalence by cutting them both out.  Write
  $(H, \partial H) \subset A$ for this first handle, and
  $H^c=\overline{A \setminus H}$.  Now we apply \cite[Theorem 12.1]{Wall}.  By
  this theorem, $f$ can be homotoped so that it is transverse to $\partial H$,
  and so that it induces a simple homotopy equivalence of triads
  \[(A’; f^{-1}H, f^{-1}H^c) \to (A; H, H^c).\]
  Then $f^{-1}H$ is a manifold homotopy equivalent to
  \[(B^m; B^k \times S^{m-k-1} \cup S^{k-1} \times B^{m-k};  S^{k-1} \times S^{m-k-1});\]
  but by simply connected surgery on triads, any such homotopy equivalence is in
  fact a diffeomorphism, so $f^{-1}H \subset A'$ is a handle.

  Once one has pulled back all the handles of $A$ to $A'$, what lies between
  $\partial_+A'$ and this union is an $s$-cobordism, and therefore a product, so
  we can easily extend the Morse function to all of $A'$.
\end{proof}
\begin{rmk}
  If one does not first make the Morse function on $M$ self-indexing, it is
  possible to give examples where one cannot find a ``corresponding'' function
  on $M'$.
\end{rmk}

In the non--simply connected case, actually computing or estimating Morse complexity is not so easy.  If $N \to M$ is a $d$-fold cover, then obviously $\MC(N) \leq d\MC(M)$.  One can also use Betti numbers of finite covers to give lower bounds on $\MC(M)$.  These are typically hard to get information about except in characteristic $0$, where one usually gets this information from $L^2$ Morse inequalities, which describe the asymptotic behavior of finite covers.

One can ask when this obvious upper bound fails to be sharp, for instance:

\begin{qn}
  If $M$ is a locally symmetric space associated to a group $G$ without discrete
  series (equivalently $M=K \backslash G/\Gamma$ has Euler characteristic
  zero), is it true that $M$ has a sequence of finite covers $M_k$ of index
  $k$ such that $\MC(M_k)/k \to 0$?
\end{qn}

The optimistic conjecture that the answer is yes, even for general aspherical manifolds, whenever $L^2$ Betti numbers don't prevent this is disproved by the calculations in \cite{AvOS}.  On the other hand, a recent paper of Fr\k{a}czyk, Mellick and Wilkens \cite{FMW} shows that when $G$ has rank $>1$, the number of generators of the fundamental group grows sublinearly as a function of the covolume.  At the same time, Avramidi and Delzant \cite{AvDe} show that the Morse complexity of a hyperbolic manifold is bounded below by an increasing function of its injectivity radius.

If one asked instead about the number of simplices in a triangulation, of course
the theory of simplicial norm \cite{GrVol,LS} prevents the question from having
a positive answer.  In fact this theory shows that whenever there is a map
$M_k \to M$ of degree $k$, not necessarily a covering map, $M_k$ must have
$\Omega(k)$ simplices.

Similarly, one can ask:

\begin{qn}[{Gromov, cf.\ \cite[\S8$\frac{1}{2}$]{GrPC}}]
  When does a nice manifold $M$, e.g.\ negatively curved or locally symmetric,
  have a sequence $M_k \to M$ of degree $k$ manifolds mapping to it, with
  $\MC(M_k)/k \to 0$?
\end{qn}

The following proposition shows that the answer is \emph{always} for
odd-dimensional manifolds.  On the other hand, Gromov tried to explain to the third author why the opposite holds for the class of locally symmetric spaces with nonzero Euler characteristic; this result is included in Theorem \ref{intro:main}.

\begin{prop} \label{prop:openbook}
  If $\dim(M)$ is odd, then $M$ always has a sequence of $k$-fold finite
  branched covers (for any index $k$) with uniformly bounded Morse complexity.
\end{prop}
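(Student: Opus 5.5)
The plan is to use an open book decomposition of $M$. By Lawson's theorem \cite{Lawson} (Alexander \cite{Alex2} in dimension $3$), every closed odd-dimensional manifold $M$ has one. So we may write
\[M = (F \times_\phi S^1) \cup_{\partial F \times S^1} (\partial F \times D^2),\]
where $F$ is a compact page with boundary $B=\partial F$ (the binding) and $\phi$ is a monodromy diffeomorphism fixing a collar of $\partial F$. For each $k \ge 1$ I will take the $k$-fold cyclic branched cover $M_k \to M$ branched along $B$. It is obtained by replacing the mapping torus $F\times_\phi S^1$ with $F \times_{\phi^k} S^1$, unwrapping the circle $k$ times, and gluing back $B\times D^2$ by the map $z\mapsto z^k$ on the disk factor. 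Thus $M_k$ is again an open book, with the same page and monodromy $\phi^k$. It is a branched cover of degree $k$, and mapping $M_k \to M$ it represents $k[M]$.

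Next I will bound $\MC(M_k)$ independently of $k$ by building a Morse function adapted to the open book. Fix a Morse function $g$ on $F$ that is nice near the boundary, with a fixed number $c=\MC(F)$ of critical points.

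The obvious approach is to give $F\times_{\phi^k}S^1$ a circle-valued Morse-type structure, but $\phi^k$ need not preserve $g$. Instead, the cleaner route is to decompose $M_k$ directly into handles. Cut $S^1$ at one point, so that $F\times_{\phi^k}S^1$ becomes $F\times[0,1]$ with its ends glued by $\phi^k$. The binding neighborhood $B\times D^2$ together with $F\times[0,1/2]$ is built from $B\times D^2$ by attaching the handles of $F$ crossed with an interval; each handle of $F$ yields one handle of the same index. The piece $F\times[1/2,1]$ is then glued on along $F\times\{1/2\}\cup F\times\{1\}$, where the second identification is twisted by $\phi^k$. Dually, this piece contributes one handle of index one higher per handle of $F$.

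The monodromy enters only through the attaching maps of these handles, never through their number. This is because $\phi^k$ is a diffeomorphism of $F$ rel boundary, so it carries a handle decomposition of $F$ to another handle decomposition with the same count. Hence
\[\MC(M_k)\le \MC(B\times D^2)+2\,\MC(F,\partial F),\]
which is independent of $k$. More simply, $M_k$ is the boundary-union of two copies of $F\times I$ glued via the identity and $\phi^k$, so $\MC(M_k)\le 2\MC(F\times I)$ counted relative to a common piece.

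The main obstacle is this handle-counting step. Gluing two handlebodies along a diffeomorphism of their boundaries does not automatically add their Morse complexities unless one is read from its boundary upward. To handle this, I will view $M_k$ as $W \cup_h W'$ with $W=F\times[0,1/2]\cup N(B)$, and present $W'$ relative to its boundary, i.e. by turning its handle decomposition upside down. The count is then the number of handles of $W$ plus the number of handles of $W'$ relative to $\partial W'$, and both depend only on $F$.
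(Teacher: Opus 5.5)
Your proposal is correct and is essentially the paper's proof: you take Lawson's open book and then the cyclic branched covers along the binding, which are open books with the same page $F$ and monodromy $\phi^k$. The paper simply asserts that these covers have bounded Morse complexity, and your count is exactly the step it leaves as obvious: $M_k$ is two copies of $F\times I$ glued along their boundaries, with one copy read from its boundary inward, so the number of handles does not depend on the gluing map $\phi^k$.
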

\begin{proof}
  This is an immediate consequence of Lawson's result \cite{Lawson} that all
  odd-dimensional closed manifolds have open book decompositions (in dimension
  $3$, this actually goes back to Alexander \cite{Alex2}).  The cyclic branched
  covers whose branch set is the binding of the open book obviously have the
  desired property.
\end{proof}
\begin{rmk}
  Even in even dimensions, using branched covers of ``almost open books'', one
  can get maps of degree $k$ from manifolds whose Betti numbers are bounded
  except in the middle dimension.
\end{rmk}

\begin{qn}
  Which $K \backslash G/\Gamma$ have open book decompositions?
\end{qn}

Obviously negative answers to Gromov's question give negative answers to this
question as well.  Below, we will use this to see geometrically that products of surfaces do not have open book decompositions, and the same is true for more complicated representation-theoretic reasons for all $K \backslash G/\Gamma$ where $G$ has discrete series.  (In principle, this should be a calculation in an algebraic theory invented by Quinn \cite{Quinn}; unfortunately the groups of non-symmetric quadratic forms involved seem very difficult to understand.  Ranicki \cite[\S29B]{Ranicki} has shown that Quinn's obstruction can be viewed as an element of a homology surgery group, but these groups are much less well-understood than $L$-groups of group rings, for which the Farrell--Jones conjecture provides a beautiful conjectural picture.)

\subsection{Morse complexity of bordism classes} \label{S:bordism}
Gromov's question above is phrased in terms of Morse complexity of covers, but it can be interpreted as asking about MC-minimal representatives for the fundamental class in real homology, as outlined in the introduction.  One can then ask analogous questions about integral homology or even bordism.  We consider both, starting with bordism.  Let $X$ be a finite complex.\footnote{This is an assumption that it would be desirable to weaken.}  We consider the bordism group $\Omega_d(X)$ of smooth oriented $d$-dimensional manifolds mapping to $X$ and try to estimate the size of its elements.  In light of Proposition \ref{prop:openbook}, the following strengthening of Theorem \ref{intro:vanish}(i) is not surprising:

\begin{prop} \label{prop:odd-d}
  For any odd $d$, all of $\Omega_d(X)$ has representatives of uniformly bounded
  Morse complexity.
\end{prop}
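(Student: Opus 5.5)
Since $X$ is a finite complex, $\Omega_d(X)$ is a finitely generated abelian group. Choose representatives $f_1\colon M_1\to X,\dots,f_r\colon M_r\to X$ of a finite generating set. It then suffices to show that, for each generator, every multiple $n[M_i\to X]$ with $n\in\mathbb{Z}$ has a representative whose Morse complexity is bounded independently of $n$. Indeed, a general element $\sum n_i[M_i]$ can then be represented by a connected sum (carried out along arcs in $X$, or after making $X$ connected componentwise) of these $r$ representatives. Connected sum of manifolds with Morse complexity at most $c_i$ has Morse complexity at most $\sum c_i$, since we may cancel a $0$-handle against a $d$-handle.

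To handle multiples, I would use open books as in Proposition~\ref{prop:openbook}. By Lawson's theorem, $M=M_i$ (odd $d$) has an open book decomposition with binding $B$ and page $F$, so $M\setminus \nu B$ is the mapping torus of a monodromy $\phi$ of $F$. The handle count of $M$ is controlled by that of $F$: a Morse function on $F$ relative to $\partial F$ gives a circle-valued Morse function on the complement, and a bounded number of extra handles come from $\nu B\cong B\times D^2$. The $n$-fold cyclic branched cover $M^{(n)}$ along $B$ is the open book with the same page and monodromy $\phi^n$, so $\MC(M^{(n)})$ is bounded independently of $n$.

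The main obstacle is that $M^{(n)}\to M\to X$ has degree $n$ over $M$, but it is not obvious that it is bordant over $X$ to $n$ copies of $M$: branched covers are not bordisms. I would get around this by a different construction. The cyclic branched cover and the disjoint union $n\cdot M$ are both obtained from $B\times D^2$ together with pieces fibered over the circle. Concretely, $M^{(n)}$ and $n\cdot M$ differ by cutting along $n$ pages and regluing. So I would construct an explicit bordism over $X$: take $M\times[0,1]$ $n$ times and glue them along neighborhoods of pages in the top boundaries, in the standard ``pair of pants'' fashion, giving a bordism from $n\cdot M$ to an $n$-fold cyclic cover of the page structure. The map to $X$ extends because all the gluing identifies copies of the same subsets of $M$ by the identity. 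The resulting top boundary agrees with $M^{(n)}$ away from the binding. Near the binding the $n$ copies of $B\times D^2$ must be capped off to a single $B\times D^2$. This is done by $B\times P$, where $P$ is a planar surface bounding $n$ disks on one side and a single disk on the other (the branched cover of a disk restricted appropriately). Since $B\times P$ maps to $X$ via $B\to X$, this completes a bordism over $X$ between $n\cdot M$ and $M^{(n)}$.

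For negative $n$ I would reverse orientation. The delicate step, which I expect to take the most care, is checking that these pieces glue compatibly along corners and that the map to $X$ is well defined on the cap. If that fails, a fallback is to replace the branched cover by the manifold obtained by gluing the mapping torus of $\phi^n$ to $B\times D^2$, and to verify bordism over $X$ by a cobordism invariants argument. That argument would use the fact that in odd dimensions the relevant rational invariants vanish and that torsion is handled by the finite-order structure of $\Omega_d(X)$. The first approach is preferable, however.
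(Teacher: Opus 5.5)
Your argument is correct. It shares the paper's outer structure: finite generation of $\Omega_d(X)$, Lawson's theorem applied to generators, and bounded complexity for all multiples of a class represented by an open book. The difference is in how you prove that last step.

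\textbf{The paper's route.} It first treats an open book with empty binding. There the $k$-fold cyclic cover is bordant over $X$ to $k$ copies of $M$, via the pullback of the fibration along a map $h\colon\Sigma\to S^1$ from a surface cobordism. It then reduces the general case to this one. The open books on $f$ and $-f$ share a binding, so $2[f]$ is represented by a manifold fibering over the circle whose fiber is a union of two pages. Odd multiples are handled as $2n\alpha+\alpha$.

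\textbf{Your route.} You run the same pair-of-pants construction on the binding complement $W=M\setminus\nu B$, with fibration $\pi\colon W\to S^1$, and extend it over the binding. This gives $n\alpha=[M^{(n)}\to M\to X]$ in one step, using exactly the branched covers of Proposition~\ref{prop:openbook}. It needs no doubling trick and no parity split. The paper, by contrast, keeps the covering step inside honest fiber bundles and confines all binding considerations to the single cobordism that identifies the two copies of $B\times D^2$.

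\textbf{Fixes needed.} Three points in your write-up should be corrected.
\begin{itemize}
\item The cap cannot be $B\times P$ with $P$ a surface, since that is only $d$-dimensional. In fact no cap is needed. Attach $n-1$ page handles $F\times J\times[0,1]$ to $n$ copies of $M\times[0,1]$ along $\pi^{-1}(J)\times\{1\}$, using disjoint arcs in each copy. The $n$ binding disks $D^2\times\{1\}$ together with the band edges $J\times[0,1]$ then form a single disk. So the top boundary is literally $W^{(n)}\cup B\times D^2=M^{(n)}$. The map to $X$ extends as $f$ composed with the bundle map $h^*W\to W$.
\item If you prefer to first form $h^*W$ over the planar surface $\Sigma$, the correct cap is $B\times D^3$ with $\partial D^3=\Sigma\cup(n+1)D^2$. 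In that version you must first homotope $f$ so that it factors through $B$ on $\nu B$.
\item The bound on $\MC(M^{(n)})$ should be justified differently. A circle-valued Morse function on the complement does not by itself bound a real-valued handle count. Instead, write any open book as two thickened pages $F\times[0,1]$ glued along their boundaries. This gives $\MC(M^{(n)})\le 2\MC(F)$ regardless of the monodromy.
\end{itemize}
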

This follows from Lawson's theorem and the finite generation of $\Omega_d(X)$ together with the following lemma:
\begin{lem} \label{lem:openbook}
  Suppose that $M$ is a manifold with an open book decomposition and
  $f:M \to X$ represents an element $\alpha \in \Omega_d(X)$.  Then multiples
  of $\alpha$ have uniformly bounded Morse complexity.
\end{lem}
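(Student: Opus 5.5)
Write $M = (B^{d-2}\times D^2)\cup_\partial T_\phi$, where $B$ is the binding, $F$ is a page with $\partial F = B$, and $T_\phi$ is the mapping torus of the monodromy $\phi\colon F\to F$ (which fixes $\partial F$). For $k\ge 1$, the plan is to construct an explicit manifold $M_k$ with a map $f_k\colon M_k\to X$ such that $[M_k,f_k]=k\alpha$ in $\Omega_d(X)$ and $\MC(M_k)$ is bounded independently of $k$. The natural candidate is the $k$-fold cyclic branched cover of $M$ branched along $B$. Its open book has the same page $F$ and binding $B$, and monodromy $\phi^k$. A handle decomposition of an open book has roughly $2\cdot\MC(F)+\MC(B)$ handles, independent of the monodromy: build $B\times D^2\cup F\times[0,\epsilon]$, then the thickened page, then close up. The main issue is that the composite of the branched cover with $f$ represents $k\alpha$ only up to a correction. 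Rather than analysing that correction, I will build a different bordism representative by hand.

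Let $N=\overline{M\setminus (F\times(-\epsilon,\epsilon))}$, which is a copy of $F\times[0,1]$ rel the binding region, and set $W = M\setminus F\times(-\epsilon,\epsilon)$, so that $W\cong F\times I$ with corners along $B$. Then $M$ is obtained from $W$ by gluing $F\times\{0\}$ to $F\times\{1\}$ via $\phi$. Take $k$ copies of $M$ and cut each along a page. Regluing the $k$ cut pieces cyclically gives a connected manifold $M_k$ equal to the mapping torus of $\phi^k$ together with the binding. There is a map $M_k\to X$: on each piece use $f$, and the pieces agree on the cut pages because all copies carry the same map $f$ restricted to $F$. Cut-and-paste along a fixed codimension-one submanifold with identical boundary data preserves bordism class over $X$. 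The standard argument builds a bordism from $\bigsqcup_k M$ to $M_k$ out of $k$ copies of $M\times I$ glued along $F\times I\times\{1\}$ regions, using a "pair of pants" along the page. More precisely, $k$ copies of $F\times[-\epsilon,\epsilon]$ reglued cyclically differ from the original by a bordism $F\times P$, where $P$ is a planar surface, and the binding must be handled compatibly. Hence $[M_k]=k[M]=k\alpha$ in $\Omega_d(X)$.

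The complexity bound comes from the cyclic regluing. Because the cut is along pages, $M_k$ again has an open book structure with page $F$ and binding $B$, so it is exactly the cyclic branched cover, but now with the map to $X$ defined piecewise. By the handle count above, $\MC(M_k)\le 2\MC(F\text{ rel }\partial)+\MC(B)+C$, uniformly in $k$. To check this, choose a Morse function on $F\times S^1$-type pieces as the sum of a Morse function on $F$ and the angle coordinate. This fails globally because of the monodromy, but the standard fix applies: place all critical points in one sector $F\times[0,\epsilon]$ and use the product elsewhere.

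The main obstacle is making the bordism $k\alpha\sim[M_k]$ rigorous near the binding, where the cut pages all meet. I would first try to avoid this by choosing the cut differently. Cut each copy of $M$ along a single page $F$ union a collar of $B$, obtaining a manifold with boundary $F\cup_B F$ (the double $DF$). Then $M_k$ is $k$ copies glued along $DF$ in a cyclic pattern. Cutting and regluing along a closed hypersurface $DF$ with the same maps on both sides is a standard bordism (via $DF\times$ pair-of-pants), which avoids the corner issue. The induced map and the open-book structure of $M_k$ should then be checked directly.
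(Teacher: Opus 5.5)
There is a genuine gap, and it sits at the one step that carries the content of the lemma: showing that your $M_k$, with its piecewise map, represents $k\alpha$.

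\textbf{Your hand-built representative is the branched cover.} The cyclically reglued manifold with the piecewise-defined map is exactly the $k$-fold branched cover $p\colon M_k\to M$ with the map $f\circ p$. Each piece maps by $f$ through the identification of that piece with $M$ cut along $F$. So switching from the branched cover to a ``hand-built'' representative gains nothing: whatever correction you were worried about must still be ruled out.

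\textbf{The pair-of-pants argument does not reach the binding.} That argument works for cyclic regluing along a \emph{closed}, two-sided, non-separating hypersurface, that is, in the empty-binding case. There the cyclic cover is the pullback of $M\to S^1$ along $z\mapsto z^k$. One pulls the fibration back over a surface $h\colon\Sigma\to S^1$ that cobounds $k$ copies of the identity and the connected $k$-fold cover; this is exactly the paper's first step. Here the page has boundary $B$, all $k$ copies meet along $B$, and ``the binding must be handled compatibly'' is precisely what you leave undone.

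\textbf{The $F\cup_B F$ workaround fails.} A closed hypersurface $\cong F\cup_B F$ in $M$ is the boundary of a tubular neighborhood $F\times[-\epsilon,\epsilon]$ of the page. It \emph{separates} $M$, so cyclically regluing identical copies along it just returns $k$ copies of $M$. The gluing that actually produces $M_k$ matches $F_+$ of one copy to $F_-$ of the next. That gluing is along $F$, not along the closed manifold $F\cup_B F$, and the binding is exactly where the branching happens.

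\textbf{Two ways to close the gap.}

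\emph{The paper's route} avoids the binding entirely.
\begin{enumerate}[(i)]
\item $[M,f]-[-M,f]=2\alpha$ is a difference of two open books with the same binding.
\item Attaching $B\times D^2\times I$ along the two binding neighborhoods gives a bordism over $X$ to the union of the two mapping-torus parts along $B\times S^1$. This fibers over $S^1$ with fiber $F\cup_B F$.
\item The closed-fiber argument then gives all multiples $2n\alpha$ with bounded complexity.
\item Odd multiples are handled as $2n\alpha+\alpha$.
\end{enumerate}

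\emph{Completing your direct route} also works.
\begin{enumerate}[(i)]
\item Pull back $T_\phi\to S^1$ along $h$. The boundary of $h^*T_\phi$ consists of $\bigsqcup_k T_\phi\sqcup -T_{\phi^k}$ together with the vertical part $B\times\Sigma$.
\item Cap this off with $B\times V$, where $V$ is a handlebody bounded by $\Sigma$ capped with $k+1$ disks.
\item Map $B\times V\to X$ by $f\circ(\mathrm{id}_B\times H)$. Here $H\colon V\to D^2$ extends the map $\partial V\to D^2$ that is $h$ on $\Sigma$, the identity on $k$ of the disks, and $z\mapsto z^k$ on the last. Such an $H$ exists because $D^2$ is contractible.
\end{enumerate}
This gives a bordism over $X$ from $k(M,f)$ to $(M_k,f\circ p)$. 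So the branched cover along a binding needs no correction at all. Your complexity estimate, which is fine, then finishes the proof for all multiples at once.
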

\begin{proof}
  Suppose first that $M$ has an open book decomposition with empty binding, in
  other words it fibers over $S^1$ and we can take a $k$-fold cyclic cover
  $p:\tilde M \to M$.  Let $h:\Sigma \to S^1$ be a cobordism over $S^1$ between
  $k$ copies of the identity map and a connected $k$-fold cover.  Then the
  pullback $N$ of the bundle $M \to S^1$ along $h$, equipped with the map
  $f \circ \tilde h:N \to X$, is a bordism between $kf$ and $f \circ p$.  The
  cyclic cover $\tilde M$ has the same Morse complexity as $M$.

  In general, the difference of two open books with the same binding is
  cobordant to a manifold that fibers over the circle with fiber the union of
  two pages, one from each book.  Applying this to $f$ and $-f$ gives a
  representative of $2[f]$ which fibers over the circle.

  This shows that even multiples of $\alpha$ have bounded Morse complexity.
  Odd multiples can be represented as $2n\alpha+\alpha$.
\end{proof}

For even $d$, the answer depends only on the fundamental group of $X$ by the
theorem below.  In particular, this proves Theorem \ref{intro:vanish}(ii).  This does not seem to be obvious even when $X$ is simply connected, but Ranicki's algebraic surgery theory implies:

\begin{thm} \label{thm:sig}
  For a finite complex $X$ and $d \neq 4$, the Morse complexity of a class in
  $\Omega_d(X)$ is determined up to a uniformly bounded error by the image of
  its higher signature class in $KO_d(K(\pi_1(X),1); \mathbb Q)$.
\end{thm}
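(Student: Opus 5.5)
We need two things: if two classes in $\Omega_d(X)$ have the same higher signature image in $KO_d(B\pi;\QQ)$, with $\pi=\pi_1(X)$, then their Morse complexities differ by a bounded amount; and Morse complexity is well defined up to bounded error on the image lattice. Since $\Omega_d(X)$ is finitely generated, it suffices to show the following. The subgroup $\mathcal K\subset\Omega_d(X)$ of classes whose higher signature vanishes rationally has representatives of uniformly bounded Morse complexity. Moreover, adding an element of $\mathcal K$ to any class costs only bounded extra complexity. The second point is automatic from the first, because Morse complexity is subadditive up to a constant under connected sum, and connected sum represents the bordism sum once $X$ is connected.

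We may assume $d$ is even, since odd $d$ is Proposition~\ref{prop:odd-d}. The plan is to analyze $\mathcal K$ via the assembly map. Rationally, $\Omega_d(X)\otimes\QQ\cong\bigoplus_i H_{d-4i}(X;\QQ)\otimes\Omega_{4i}\otimes\QQ$. The higher signature is the composite of the map $\Omega_d(X)\to \Omega_d(B\pi)$, the $L$-orientation $\Omega_*\to KO_*$ (equivalently $\Omega^{SO}\to L^\bullet$), and $KO_d(B\pi;\QQ)$. Its kernel is therefore generated, rationally, by three kinds of classes:
\begin{enumerate}[(a)]
\item classes dying in $\Omega_d(B\pi)\otimes\QQ$, which are supported on the part of $X$ beyond its $2$-skeleton;
\item classes of the form $[N\to B\pi]\times P$ where $P$ has signature zero, i.e.\ the kernel of $\Omega_*\otimes\QQ\to L_*\otimes\QQ$;
\item torsion.
\end{enumerate}
Torsion and finite-index issues are handled by finite generation: we only need one bounded representative of each generator of a finite-index subgroup, and we add a finite list of coset representatives.

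The main step is to produce bounded representatives for $\mathcal K$. Here I would use surgery and open books rather than generators. By Ranicki's algebraic surgery, a class in $\Omega_d(X)$ whose symmetric signature $\sigma^*(M)\in L^d(\ZZ\pi)$ vanishes, modulo the image of the assembly kernel, is bordant over $X$ to something we can surger to a highly connected map. Concretely, first do surgery below the middle dimension to make $f\colon M\to X$ an $(d/2)$-equivalence onto the $2$-skeleton-plus-$\pi_1$ data. Every bordism class has such a representative with Morse complexity bounded in terms of the handles of $X$ below the middle dimension, plus the middle-dimensional handles. The middle-dimensional handles carry a $(-1)^{d/2}$-quadratic/symmetric form over $\ZZ\pi$, and the class is determined by that form in $L^d(\ZZ\pi)$.

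If the rational higher signature vanishes, one would like the form to be stably hyperbolic up to bounded rank. This is exactly where the Novikov conjecture is an issue: $KO_d(B\pi;\QQ)\to L_d(\ZZ\pi)\otimes\QQ$ need not be injective. So I would instead argue that $\mathcal K$ lies in the kernel of assembly on the normal-invariant level, i.e.\ it comes from $\Omega_d$-classes whose image in $H_*(B\pi;L_\bullet)\otimes\QQ$ vanishes. Hence, rationally, each such class is a combination of products $N\times P$ with $\operatorname{sign}(P)=0$ and of classes killed in $B\pi$. For products, $P$ is rationally bordant to a manifold admitting a fibration over $S^1$ or an open book, because signature-zero rational bordism is generated by mapping tori. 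So $N\times P$ has an open book and Lemma~\ref{lem:openbook} bounds its multiples. For classes killed in $B\pi$, we use a fixed nullbordism over $B\pi$, lifted to $X$ by surgery and doubling, to write the class as a boundary plus a bounded correction. The condition $d\neq4$ enters where we need Smale/Wall-type handle manipulations, as in Theorem~\ref{thm:she}.

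The main obstacle I expect is the converse direction: showing that classes with the same rational higher signature are not merely rationally bordant modulo bounded things, but actually have bounded difference in Morse complexity integrally. Specifically, this means controlling the middle-dimensional forms uniformly, so that the complexity depends on the $KO$-image alone, including across multiples $k\alpha$. I would first try to reduce everything to open-book representatives of a finite generating set of $\mathcal K$ and invoke Lemma~\ref{lem:openbook}.
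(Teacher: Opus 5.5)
Your reduction matches the paper's. Finite generation of $\Omega_d(X)$ together with connected sum means it suffices to show that a finite-index subgroup of $\mathcal K$ is generated by classes whose multiples have bounded Morse complexity. Your rational splitting of $\mathcal K$ into (a) classes dying in $\Omega_d(B\pi)\otimes\QQ$ and (b) products with a signature-zero factor is also correct. Type (b) is fine too, given a Burdick/Neumann-type theorem that signature-zero bordism is rationally represented by manifolds fibering over $S^1$: then $N\times P$ fibers over the circle and Lemma~\ref{lem:openbook} applies. Your closing worry about the ``converse direction'' is already settled by your own connected-sum argument.

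The gap is type (a), which carries most of the content: for simply connected $X$ it is every signature-zero class. A nullbordism over $B\pi$ cannot be ``lifted to $X$ by surgery and doubling''. If it lifted, $\alpha$ would already vanish in $\Omega_d(X)$, and nothing in your sketch produces representatives of $k\alpha$ whose complexity is independent of $k$.

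The missing input is an open book existence theorem whose obstruction depends only on $\pi_1$. This is Winkelnkemper in the simply connected case and Quinn in general, with Ranicki's identification of the obstruction as the asymmetric signature, the image of the symmetric signature $\sigma^*(M)\in L^d(\ZZ\pi)$, once surgery has made $\pi_1 M\to \pi_1 X$ an isomorphism. Since $\sigma^*(M)$ is the assembly of the $L$-homology class of $M$ pushed to $B\pi$, whose rationalization is the higher signature, it vanishes on a finite-index subgroup of $\mathcal K$. That subgroup, types (a) and (b) together, is therefore generated by open books, and Lemma~\ref{lem:openbook} finishes. This is the paper's entire proof, and it makes your decomposition and middle-dimensional form discussion unnecessary.

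Your Novikov-conjecture concern also points the wrong way. You only need that a class vanishing (up to torsion) in $KO_d(B\pi;\QQ)$ has vanishing image in $L$-theory, which holds because assembly is a homomorphism. Injectivity of assembly would matter only for lower bounds, which this theorem does not assert. Likewise, $d\neq 4$ enters through the dimension hypothesis of Quinn's theorem, not through Theorem~\ref{thm:she}.
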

The \strong{higher signature class} $s \in \bigoplus_i H_{d-4i}(K(\pi_1(X),1); \mathbb Q)$ of a map $f:M \to X$ is defined by
\[s=f_*(L(M) \cap [M]),\]
where $L(M)$ is the Hirzebruch $L$-class.  Rationally, this can be viewed as living in $KO_d$ since elements of $KO^d$ are determined by their Pontryagin classes.  (It is well-known that this is the homology $L$-class of the symbol of the signature operator on $M$.)

For example, if $X$ is simply connected, the result is that
$\MC(\beta) = \Sign(\beta) + O(1)$, where $\Sign$ is the signature of any
representative manifold for the class $\beta$.  That no other classes obstruct
is a consequence of Winkelnkemper's theorem \cite{Wink} (reproved by Quinn
\cite{Quinn}) that any simply connected manifold of dimension $>4$ with
signature $0$ is an open book. The signature obviously obstructs because
signature is a cobordism invariant and gives lower bounds for Betti numbers.

\begin{proof}[Proof of Theorem \ref{thm:sig}]
  Let $f:M^d \to X$ be a map.  We can ensure via surgeries that
  $\pi_1(M)=\pi_1(X)$.  We will show that every class in the kernel of the
  higher signature can be represented by manifolds of uniformly bounded Morse
  complexity.  The theorem follows.

  The class $L(M) \cap [M]$, when viewed in the appropriate $L$-group, is the
  \emph{symmetric signature} first constructed by Mischenko \cite{Mis}.  By
  results of Ranicki \cite[\S29]{Ranicki}, $f_*(L(M) \cap [M])$ has a further
  image, the \emph{asymmetric signature}, which is exactly the obstruction to
  the bordism class of $f:M \to X$ having an open book representative.
  Therefore, the kernel of the map
  \[[f:M \to X] \mapsto f_*([M] \cap L(M))\]
  is finitely generated by open books.  The result follows by Lemma
  \ref{lem:openbook}.
\end{proof}

In dimension $4$, we can use Kreck surgery to prove an analogous but slightly weaker result, which is still sufficient to prove Theorem \ref{intro:vanish}(ii) in dimension $4$.
\begin{thm}
  For a finite complex $X$, let $\alpha \in \Omega_4(X)$ be a class whose
  higher signature class is zero.  Then the Morse complexity of $n\alpha$ grows
  at most as $\log^2(n)$.
\end{thm}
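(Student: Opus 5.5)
Let $\pi=\pi_1(X)$. I would first reduce the dimension-$4$ problem to a bordism statement over $B\pi$ and then control complexity by halving. By the Atiyah--Hirzebruch spectral sequence, rationally $\Omega_4(X)\otimes\QQ \cong H_4(X;\QQ)\oplus H_0(X;\QQ)$. The vanishing of the higher signature class kills the signature component and the image in $H_4(K(\pi,1);\QQ)$. The remaining classes differ from open books or from trivial classes in three ways: torsion, classes in $H_4(X)$ dying in $H_4(B\pi)$, and lower-filtration pieces coming from $H_{4-i}(X;\Omega_i)$. Since $\Omega_4(X)$ is finitely generated, I would fix finitely many generators $\alpha_1,\dots,\alpha_r$ of the subgroup of classes with vanishing higher signature. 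It then suffices to show that for each generator $\alpha$, $\MC(n\alpha)=O(\log^2 n)$. Sums of $r$ such classes, taken by connected sum, add only $O(r\log^2 n)$.

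The core construction is a doubling step. Given a representative $M$ of $m\alpha$, I want a representative of $2m\alpha$ whose Morse complexity exceeds $\MC(M)$ by at most $O(\log m)$, or even by $O(1)$ plus something controlled. Writing $n$ in binary and adding at most $\log n$ intermediate summands then yields the bound. To get this, I would use Kreck's modified surgery. $M\#M$ represents $2m\alpha$ with complexity about $2\MC(M)$, which is too big. Instead, I would exhibit a bordism $W$ over $X$ (more precisely over the normal $1$-type $B\pi\times BSO$, or $B\pi\times BSpin$ adjusted) from $M\#M$ to a manifold $M'$ obtained from $M$ by a bounded modification. Kreck's theorem says that two $4$-manifolds with the same normal $1$-type that are bordant over it become diffeomorphic after connected sum with copies of $S^2\times S^2$. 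The Euler characteristics must then agree, and the higher signature hypothesis ensures the obstruction in $l_5$ is not an issue stably. Concretely, $M\#M \# k(S^2\times S^2)\cong M''\# k'(S^2\times S^2)$ for some cheap $M''$. The task is to bound $k'$.

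The main obstacle is exactly this bookkeeping of how many $S^2\times S^2$ summands the stabilization costs. My first attempt is to exploit that the middle-dimensional homology of a vanishing-signature representative can be split into hyperbolic pieces. Over $\ZZ\pi$ I would use the realization of the $2$-skeleton of $B\pi$ (with fixed complexity $c_X$) plus $\#_j S^2\times S^2$. Doubling then costs a number of new hyperbolic summands bounded linearly in the current number of such summands' \emph{generators} rather than their count, which is roughly $O(\log m)$ after the $j$-th step. Summing $O(\log m)$ over $\log n$ binary steps gives $O(\log^2 n)$. I expect verifying this per-step $O(\log n)$ estimate, rather than $O(1)$, to be the delicate point. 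It should come from explicitly building the Kreck bordism with handles of index $2,3$ only and counting them via the binary expansion of the coefficient in $H_2$ of the normal $1$-type.
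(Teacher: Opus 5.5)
\textbf{There is a genuine gap: the doubling step.} That step is the entire content of the theorem, and the mechanism you propose for it cannot give sublinear growth. Kreck's theorem only produces stable diffeomorphisms. So your new representative of $2m\alpha$, namely $M''\#k'(S^2\times S^2)\cong N\#N\#k(S^2\times S^2)$, where $N$ is your current representative of $m\alpha$, is just a stabilization of $N\#N$. (Incidentally, $N\#N$ must first be surgered to fundamental group $\pi$, since $\pi_1(N\#N)=\pi*\pi$ gives a different normal $1$-type.)

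This forces linear growth. For a closed oriented $4$-manifold with $\pi_1=\pi$ we have $\chi=2-2b_1(\pi)+b_2$. Connecting two copies of $N$ and doing $r\ge b_1(\pi)$ circle surgeries gives $\chi=2\chi(N)-2+2r$, hence $b_2\ge 2b_2(N)$, and stabilization only increases this. So your representatives $N_j$ of $2^j\alpha$ satisfy $b_2(N_j)\ge 2^j b_2(M)$. Since $\MC\ge b_2$, their Morse complexity grows linearly in $n$, however you bound $k'$.

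The claimed $O(\log m)$ cost per step ``via the binary expansion of the coefficient in $H_2$'' has no mechanism behind it. If you really had a bordism over $X$ from $N\#N$ to a cheap $M''$, you would simply take $M''$; Kreck would play no role, and you would have assumed what is to be proved. Relatedly, the hypothesis does not enter through an $l_5$ obstruction, since there is none for stable diffeomorphism. It enters by making the relevant manifolds bordant over the normal $1$-type.

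\textbf{The missing idea} is to let the degree of a map, rather than the number of copies of $M$, carry the multiplicity, and to apply Kreck's theorem only once, to the fixed representative $M$ of $\alpha$. The paper's argument runs as follows.
\begin{enumerate}[(1)]
\item Form $M'$ from $M\sqcup M$ by surgeries identifying the two copies of $\pi$. It has a fold map $g\colon M'\to M$ of degree $2$.
\item Since $\Sign(M)=0$ and $[M]\mapsto 0\in H_4(B\pi;\QQ)$, the class of $M$ in $\Omega_4^{\mathrm{Spin}}(B\pi)$ is torsion. After restricting to the finite-index subgroup of spin classes and passing to a multiple, $M'$ and $M$ are bordant over the normal $1$-type.
\item Kreck then gives $M'\#a(S^2\times S^2)\cong M\#b(S^2\times S^2)$. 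Transporting $g$ yields a degree-$2$ map $M\#k(S^2\times S^2)\to M$ with $k$ depending only on $M$. The Euler characteristic is no obstruction here, because this is a map, not a diffeomorphism.
\item Iterating gives degree-$2^p$ maps $M\#pk(S^2\times S^2)\to M$, so $\MC(2^p\alpha)\le \MC(M)+2pk$.
\item Summing over the $O(\log n)$ binary digits of $n$ gives $O(\log^2 n)$.
\end{enumerate}
Each doubling costs a constant $2k$ determined by $M$, not something that grows with the current representative. Your preliminary reductions (rational AHSS, finitely many generators, connected sums of generators) are fine.
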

\begin{proof}
  Let $f:M^4 \to X$ be a map representing $\alpha$.  As before, we can assume
  that $\pi_1(M)=\pi_1(X)=\pi$.

  Note that $\Omega_4(X)$ is rationally $H_4(X) \oplus H_0(X)$, with the latter
  summand representing the signature.  Thus, equivalently, $\Sign(M)=0$ and
  \[f_*[M] \in \ker(H_4(X;\QQ) \to H_4(B\pi;\QQ)).\]

  Suppose first that $M$ is spin.  Kreck \cite[Theorem C]{Kreck} (as elaborated
  in \cite[\S2.1.1]{KPT}) then shows that any $M'$ spin-cobordant to $M$ over
  $B\pi$ is diffeomorphic to $M$ after stabilization by connected sum with
  copies of $S^2 \times S^2$.

  Let $M'$ be obtained by surgery from $M \sqcup M$ which identifies the two
  copies of the fundamental group.  In particular, there is a natural degree
  $2$ map $g:M' \to M$, and $f \circ g$ again sends
  $[M'] \mapsto 0 \in H_*(B\pi;\QQ)$.  Thus $M'$ is stably bordant to $M$ over
  $B\pi$, giving a degree $2$ map $M \mathbin{\#} k(S^2 \times S^2) \to M$ for
  some $k$.  Iterating this procedure gives maps
  \[M \mathbin{\#} pk(S^2 \times S^2) \to M\]
  of degree $2^p$, so that $2^p[M] \in \Omega_4(X)$ has Morse complexity
  $2pk+MC(M)$.  Using the binary expansion of $n$, we get more generally that
  $n\alpha$ has Morse complexity $O(\log^2(n))$.

  Since classes with spin representatives form a subgroup of finite index, this
  completes the proof.
\end{proof}

\begin{rmk}
  Not all higher signatures give rise to lower bounds on Morse complexity.  For
  example, Gromov's question can be rephrased as asking whether the top
  cohomology class does, in the case that $X$ is a closed oriented manifold.
  When $X=T^n$, it obviously does not, for example because tori have
  self-covers of arbitrary degree.
\end{rmk}

\subsection{Lower bounds via local systems}\label{sec:local-systems}
Now we work our way towards the proof of Theorem \ref{intro:main}.  A source of higher signatures that \emph{do} give lower bounds is from local systems.  If $\mathcal L$ is a local system of (perhaps skew-)symmetric or Hermitian quadratic forms on $K(\Gamma,1)$, then for a $2k$-manifold $M$ equipped with a map $M \to K(\Gamma,1)$ one gets an inner product pairing on $H^k(M; \mathcal L)$ whose signature is an invariant of elements of $\Omega_{2k}(K(\Gamma,1))$.  This signature gives a lower bound on the rank of the $k$th homology and therefore on $\MC(M)$.  

The signature of local systems arises in showing that signature is not
multiplicative on bundles \cite{Atiyah,Meyer}.  Using Atiyah's examples of this phenomenon, we show the following proposition; its proof generalizes to the much more powerful Theorem \ref{thm:MC-lower-bound}.
\begin{prop} \label{prop:PiSigmas}
  Let $\Sigma$ be a surface of genus at least $2$.  Then the signature
  associated to the fundamental cohomology class $[\Sigma]$ gives a lower bound
  on Morse complexity of bordism classes.
\end{prop}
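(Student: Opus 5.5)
The plan is to produce a local system $\mathcal L$ of (skew-)Hermitian forms on $K(\pi_1\Sigma,1)=\Sigma$ whose twisted signature, for any closed oriented $2k$-manifold $f:M\to\Sigma$, is a nonzero multiple of the higher signature $\langle L(M)\cup f^*[\Sigma],[M]\rangle$. Since the twisted signature of $H^k(M;f^*\mathcal L)$ is a bordism invariant over $\Sigma$, and its absolute value is bounded by $\dim H^k(M;f^*\mathcal L)\le \rk(\mathcal L)\cdot\#\{\text{index-}k\text{ critical points}\}$, it will give a lower bound on $\MC$. Passing to a Morse function and the cellular cochain complex with local coefficients makes this last inequality immediate. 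Dividing by $n$ then gives a lower bound for $\MC(\beta)$ on real homology classes as well, since both sides scale linearly.

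For $\mathcal L$ I take the flat bundle coming from the symplectic representation $\rho:\pi_1\Sigma\to \Sp(2g,\RR)$, or rather a variant of it, following Atiyah's and Meyer's examples. The natural choice is the holonomy of a hyperbolic structure, $\pi_1\Sigma\to \SL(2,\RR)=\Sp(2,\RR)$, acting on $\RR^2$ with its symplectic form. Then $H^k(M;\mathcal L)$ carries a symmetric form when $k$ is odd and a skew form otherwise. To handle both parities I will complexify, using $\U(1,1)\cong$ an indefinite Hermitian group, so that the twisted signature is always defined.

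The key computation is the twisted signature theorem (Atiyah–Singer, or Lusztig/Meyer):
\[
\Sign(M;f^*\mathcal L)=\big\langle \ch^{\pm}(\mathcal L)\cup L(M),[M]\big\rangle,
\]
where $\ch^\pm(\mathcal L)=\ch(V^+)-\ch(V^-)$. Here $V^\pm$ is the splitting of the flat bundle into maximal positive and negative subbundles, obtained by reducing the structure group to the maximal compact subgroup $\U(1)$. For the $\SL(2,\RR)$ representation, $V^+$ is a complex line bundle whose Chern class is the Euler class of the representation. By Milnor–Wood, equality holds for the Fuchsian representation, so $e=\pm(2-2g)[\Sigma]^*\neq 0$. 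Thus $\ch^\pm$ has degree-$2$ part a nonzero multiple of $[\Sigma]^*$, and no higher terms appear, since $H^{>2}(\Sigma)=0$. The degree-$0$ part contributes $\rk(V^+)-\rk(V^-)=0$. Hence the twisted signature equals $c\,\langle L(M)\cup f^*[\Sigma]^*,[M]\rangle$ with $c=2(2g-2)\neq0$, up to normalization.

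The main obstacle is this index-theoretic identification. The first thing I would do is cite Lusztig's (or Meyer's) formula for signatures with coefficients in flat indefinite-unitary bundles. I would pay careful attention to the sign and parity conventions, and to the fact that the flat bundle is not unitary, so one must use $V^+-V^-$ rather than $\mathcal L$ itself. If that citation proves awkward, a fallback is to check the formula directly on Atiyah's Kodaira fibrations and argue by bordism. By Theorem \ref{thm:sig}, $\Omega_*(\Sigma)\otimes\QQ$ is detected by higher signatures, and both sides are bordism invariant and additive. So it suffices to verify the formula on generators: manifolds with $f$ constant, where both sides reduce to $2\Sign$ and $0$ correctly, and products $N\times\Sigma\to\Sigma$, where a Künneth computation gives $\Sign(N)\cdot\Sign(\Sigma;\mathcal L)$ and Meyer's surface computation applies.
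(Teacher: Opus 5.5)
Your argument is correct, but it is not the route the paper takes for this proposition.

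\textbf{The paper's route.} The paper starts from an Atiyah--Kodaira surface bundle $S\to E\to\Sigma$ with $\Sign(E)\neq 0$. Atiyah's formula $\Sign(E)=\langle c_1(\xi)\cup L(B),[B]\rangle$, applied to the pullback $f^*E\to M$, gives $\Sign(f^*E)=\Sign(E)\,\langle f^*[\Sigma]\cup L(M),[M]\rangle$ by naturality. Meyer's theorem then identifies $\Sign(f^*E)$ with the signature of $M$ twisted by the rank-$2h$ local system $H^1(S;\RR)$, where $h$ is the genus of $S$. This is bounded by the middle-dimensional twisted cohomology.

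In that route the twisted signature is the ordinary signature of a closed manifold. Bordism invariance and the characteristic-class identity therefore come packaged in statements about fibre bundles. The price is that one needs a surface bundle of nonzero signature over $\Sigma$ itself, or over a surface onto which $\Sigma$ maps with nonzero degree. That is nontrivial geometric input.

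\textbf{Your route.} You use the rank-$2$ Fuchsian system $\pi_1\Sigma\to\SL(2,\RR)\subset\U(1,1)$ and apply Lusztig's twisted signature theorem for flat indefinite-unitary bundles directly on $M$. Nonvanishing comes from the Euler class of the uniformization. This works uniformly for every $g\ge 2$ and needs no surface bundles. It is precisely the $n=1$ case of the Chern-signature argument the paper later uses for Theorem~\ref{thm:MC-lower-bound}: $G=\Sp_2(\RR)$, $\gamma(U)=e^{x}-e^{-x}$, compact dual $S^2$, Matsushima class $[\Sigma]$. The cost is relying on the index theorem for non-unitary flat bundles. Atiyah's formula rests on essentially the same index-theoretic input anyway.

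\textbf{Small corrections.}
\begin{enumerate}
\item A lift of the Fuchsian holonomy to $\SL(2,\RR)$ exists because $2g-2$ is even. Its Euler number on $\RR^2$ is $\pm(g-1)$, since Milnor's bound $|e|\le g-1$ is attained. The value $\pm(2g-2)$ is the number for the $\mathrm{PSL}(2,\RR)$-action on $\RR P^1$. Only nonvanishing matters, so this is harmless.
\item In your fallback, Theorem~\ref{thm:sig} does not say that $\Omega_*(\Sigma)\otimes\QQ$ is detected by higher signatures, and it is not: already $\Omega_8\otimes\QQ$ has rank $2$. What you actually use is that $\Omega_{2k}(\Sigma)\otimes\QQ$ is generated by $N\to\mathrm{pt}$ and $N\times\Sigma\to\Sigma$. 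This follows from the rational collapse of the Atiyah--Hirzebruch spectral sequence.
\item Your remark about real homology classes holds only in degree $2$. In higher degrees, a multiple of any rational class is represented by a stably framed manifold, for which $\langle L(M)\cup f^*[\Sigma],[M]\rangle=0$. So there the bound applies only to the (stable) Morse complexity of bordism classes.
\end{enumerate}
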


\begin{proof}
  Let $f:M^{4n+2} \to \Sigma$ be a map, and let $S$ be another surface of genus
  at least $2$.  According to Atiyah \cite[\S4]{Atiyah}, bundles $E \to B$ with
  fiber $S$ satisfy the equation
  \[\Sign(E)=\langle c_1(\xi) \cup L(B), [B]\rangle,\]
  where $\xi$ is the induced vector bundle over $B$ with fiber
  $H^1(S; \mathbb R)$.

  Let $p:E \to \Sigma$ be an $S$-bundle such that $E$ has nonzero signature, as
  constructed by Atiyah.  Then the above identity implies, by functoriality,
  that
  \[\langle f^*[\Sigma] \cup L(M), [M] \rangle = \frac{\Sign(f^*E)}{\Sign(E)}.\]
  By \cite[Satz~I.2.2]{Meyer} $\Sign(f^*E)$ gives a lower bound on
  $\rk(H^{2n+1}(M))\rk(H^1(S))$, and therefore a lower bound on the Morse
  complexity.
\end{proof}

On the other hand, suppose $X$ is a product of two surfaces, and let $\omega$ be a cup product of one-dimensional classes from different surfaces.  Then the associated signature \emph{does not} give a lower bound on Morse complexity: $\omega$ pairs nontrivially with a torus, and the signature of maps to this torus does not give a lower bound on Morse complexity, as already discussed.

Lusztig, in his thesis \cite[\S5]{Lusztig}, showed that Chern characters of local systems on $\Sp(2n, \mathbb Z)$ surject onto the rational cohomology of $B\Sp(2n, \mathbb R)$.  By the same argument as in Prop.~\ref{prop:PiSigmas}, signatures associated to such cohomology classes give lower bounds on Morse complexity.  In particular, if a manifold maps to a nontrivial cycle in this group, then its fundamental class in real homology has nonzero complexity.  Gromov, in \cite[\S8$\frac{1}{2}$,~pp.~139--140]{GrPC}, gives other examples, including all products of even-dimensional hyperbolic manifolds.

As Gromov suggests, Lusztig's calculations actually generalize to all $G$ which have discrete series representations (equivalently, $K \backslash G/\Gamma$ of nonzero Euler characteristic).  The calculations are given in Section \ref{S:calc}.

To formulate the result, it is necessary to discuss a well-known aspect of the homology of locally symmetric spaces (see e.g.~the discussion of the Matsushima formula in \cite[\S4.2]{Serre}; our target is the piece corresponding to the trivial representation).  Every symmetric space $K \backslash G$ of noncompact type has a compact dual, which we denote $K \backslash G_u$---and the real cohomology of a compact locally symmetric space $K \backslash G/\Gamma$ always contains $H^*(X)$ as a subalgebra.  This subalgebra consists of the cohomology classes whose harmonic representatives, when lifted to the universal cover $K \backslash G$, are invariant under the action of the whole group $G$.  We are interested in the map on homology dual to this inclusion.

In these terms, the Lusztig-type calculations described above imply Theorem \ref{intro:main}, stated with more precision as follows:
\begin{thm} \label{thm:MC-lower-bound}
  Let $M$ be an $m$-manifold such that:
  \begin{itemize}
  \item $\pi_1(M)$ is (or maps to) a lattice $\Gamma$ in a Lie group $G$
    admitting discrete series representations.
  \item The higher signature class of $M$ has nontrivial image in the homology
    of the dual symmetric space of $K \backslash G$, i.e.~in
    $\bigoplus_i H_{m-4i}(K \backslash G_u;\mathbb Q)$.
  \end{itemize}
  Then the Morse complexity of the fundamental class $[M]$ can be bounded below
  by this image, and is not stably trivial.  In particular, $M$ does not have
  an open book decomposition.

  More generally, such a lower bound holds whenever $M \to BG$ is a map from an
  even-dimensional manifold $M$ and the higher signature class has nontrivial
  image.  Consequently, such an $M$ likewise does not admit an open book
  decomposition.
\end{thm}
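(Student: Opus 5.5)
We generalize the argument of Proposition~\ref{prop:PiSigmas}, replacing Atiyah's surface bundles by flat bundles coming from representations of $\Gamma$. Suppose $\rho:\Gamma\to G'$ is a finite-dimensional representation into the isometry group $G'$ of a nondegenerate symmetric or skew-symmetric (or Hermitian) form, so $G'$ is $O(p,q)$, $\Sp(2n,\RR)$ or $\U(p,q)$. Then $\rho$ gives a flat local system $\mathcal L_\rho$ of forms on $B\Gamma$.

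For $f:M^{2k}\to B\Gamma$, the pairing on $H^k(M;f^*\mathcal L_\rho)$ has a signature $\Sign(M;\rho)$. This signature satisfies the following:
\begin{enumerate}[(a)]
\item It is a bordism invariant of $[f]\in\Omega_{2k}(B\Gamma)$.
\item It is bounded in absolute value by $\rk(\mathcal L_\rho)\cdot\dim H^k(M;\RR\text{-twisted})$. A Morse function with $c_k$ critical points of index $k$ gives a twisted cellular chain complex of rank $c_k\rk\mathcal L_\rho$ in degree $k$. Hence $|\Sign(M;\rho)|\le \rk(\mathcal L_\rho)\,\MC(M)$.
\item By the Atiyah--Singer/Lusztig index formula for the twisted signature operator,
\[
\Sign(M;\rho)=\langle \ch^{\pm}(\rho)\cup L(M),[M]\rangle ,
\]
where $\ch^\pm(\rho)$ is the Chern character of the virtual bundle $V_+-V_-$ obtained from a maximal positive/negative splitting of the form. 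In the symplectic case one uses the $\U(n)$-reduction, as in Meyer and Lusztig.
\end{enumerate}
Since both sides scale linearly under $M\mapsto$ multiples, (b) and (c) together give
\[
\MC([M])\ \ge\ \frac{|\langle \ch^\pm(\rho)\cup L(M),[M]\rangle|}{\rk\mathcal L_\rho}
\]
even for the rationally defined Morse complexity of the class, and hence for $\MC(kf)/k$. So the class is not stably trivial. Since an open book yields, by Lemma~\ref{lem:openbook}, bounded complexity of all multiples, no open book exists.

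It therefore suffices to show that the classes $\ch^\pm(\rho)$, pulled back along $B\Gamma\to BG^\delta$, span the Matsushima subalgebra $H^*(K\backslash G_u;\RR)\subset H^*(B\Gamma;\RR)$, or at least enough of it that a higher signature with nonzero image in $\bigoplus_i H_{m-4i}(K\backslash G_u;\QQ)$ pairs nontrivially with some $\ch^\pm(\rho)\cup L$.

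\textbf{Reduction to representations of $G$.} We take $\rho$ to be restrictions to $\Gamma$ of finite-dimensional representations of $G$ preserving a form. The associated flat bundle over $K\backslash G/\Gamma$ is then isomorphic, as a topological bundle, to the bundle induced from the $K$-representation. Its characteristic classes are the Chern--Weil images under $H^*(BK)\to H^*(K\backslash G_u)$, that is, Matsushima classes. This also handles the ``more generally'' clause via $M\to BG$ with $G$ discrete: the same classes live on $BG^\delta$, and the argument above applies verbatim.

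\textbf{Surjectivity.} Because $G$ has discrete series, $\rk K=\rk G$. Consequently $H^*(K\backslash G_u)=H^*(BK)/(H^{>0}(BG))$ is generated by $H^*(BK)$, and is generated as a ring by Chern classes of $K$-representations. The task is to show that the differences $V_+-V_-$, restricted from $G$-representations with invariant forms, generate this quotient in the relevant degrees; then $\ch(V_+)-\ch(V_-)$ ranges over a spanning set.

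\textbf{Main obstacle.} I expect the surjectivity step to be the hard part. It must be checked case by case over the simple $G$ with $\rk K=\rk G$: $\SO(p,q)$ with $pq$ even, $\SU(p,q)$, $\Sp(2n,\RR)$, $\Sp(p,q)$, $\SO^*(2n)$, and the exceptional cases. For each, I would follow Lusztig's computation for $\Sp(2n,\RR)$. One takes the standard representation and its exterior and symmetric powers, tracks the Cartan decomposition $\mathfrak p$-parity to identify $V_\pm$ as $K$-modules, and uses the splitting principle on the common maximal torus. This shows that the resulting $\ch^\pm$ generate, modulo $H^{>0}(BG)$-relations, the graded ring $H^*(BK)_{W_G}$-coinvariants.

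Products of factors are then handled by tensor products of local systems, since signatures multiply for external tensor products of forms.
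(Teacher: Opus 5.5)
Your outer argument matches the paper's reduction to Theorem~\ref{thm:rational-surjectivity} and Corollary~\ref{cor:dense-image}: bordism invariance of the twisted signature, the bound $|\Sign(M;\rho)|\le\rk\mathcal L_\rho\cdot\MC(M)$, the index formula, and reduction to surjectivity of Chern signatures of $G$-representations onto $H^*(K\backslash G_u)$. The gap is in the surjectivity step for $G=\SO(p,q)$. There, the representations you propose (the standard representation, its exterior and symmetric powers, and tensor products of these) cannot suffice.

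All of these extend to $O(p,q)$, and their positive/negative splittings are $O(p)\times O(q)$-invariant. Concretely, $\Lambda^k\CC^{p,q}=\bigoplus_{a+b=k}\Lambda^a\CC^p\otimes\Lambda^b\CC^q$ with sign $(-1)^b$. Each $\ch(\Lambda^a\CC^p)$ is a symmetric function of the $e^{\pm x_i}$, so it is invariant under every single sign change $x_i\mapsto -x_i$. Hence every Chern signature you produce lies in the Pontryagin subring. The Euler classes $e=x_1\cdots x_{m_1}$ and $f=y_1\cdots y_{m_2}$ are odd under a single sign change, so they are not in the closure of the image.

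This is fatal for the headline case. For $G=\SO_0(2n,1)$ the compact dual is $S^{2n}$, and all Pontryagin classes restrict to zero on it. The only positive-degree Matsushima class is therefore the image of $e$. So for even-dimensional hyperbolic manifolds your plan detects only the ordinary signature, which vanishes.

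The missing idea is to use the spin and half-spin representations of $\Spin(p,q)$:
\begin{itemize}
\item They are Hermitian, with $K$-splitting $S^+=S_1^e\otimes S_2$, $S^-=S_1^o\otimes S_2$ (and the analogous splitting for $S^e,S^o$ when $p,q$ are both even).
\item Their Chern signature $\gamma(S)=\sum_{\epsilon,\delta}\bigl(\prod_i\epsilon_i\bigr)e^{(\epsilon\cdot x+\delta\cdot y)/2}$ has lowest-degree term a nonzero multiple of $e$, and $\gamma(S^e)-\gamma(S^o)$ produces $f$ (Lemmas~\ref{lem:spin-chern} and~\ref{lem:even-spin-chern}).
\item They do not factor through $\SO(p,q)$. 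So you also need that a finite-index subgroup of $\Gamma$ lifts to the double cover, which follows from residual finiteness of the linear group, or else you must use induced local systems. This is why Theorem~\ref{thm:rational-surjectivity} is stated for some group with the same Lie algebra.
\end{itemize}

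Relatedly, your plan does not say which representations to use for the exceptional groups. The paper uses fundamental or adjoint representations. Their restrictions to $K$ contain half-spin summands exactly where an Euler class must be hit ($E_{6(-14)}$, $E_{7(-5)}$, $E_{8(8)}$). It combines these with $\Lambda^2$ and Adams operations to separate generators of equal degree, such as $p_3$ and $e$ for $E_{7(-5)}$.
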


In particular, even-dimensional hyperbolic manifolds satisfy this theorem.  More generally, one can consider locally symmetric manifolds associated to lattices in $O(m,n)$, if $mn$ is even, $SU(m,n)$, $Sp(m,n)$, $SO^*(2n)$, and $Sp_{2n}$, as well as certain exceptional Lie groups.

We now return to Gromov's question of lower-bounding Morse complexity in a homology class, rather than a bordism class.  Rationally, $H_n(X)$ is isomorphic to the group of \emph{framed} bordism classes of $n$-manifolds in $X$, since the latter is $\pi_n^s(X)$.  In other words, every homology class can be stably represented by a manifold with trivial $L$-class, and therefore by Theorem \ref{thm:sig} the stable Morse complexity of a class in $H_n(X)$ is determined by its image in $H_n(K(\pi_1(X),1);\QQ)$.

In particular, for example, the stable Morse complexity is always trivial in simply connected manifolds, and manifolds with nonzero signature may have fundamental classes with trivial stable Morse complexity.  This can be seen easily for, e.g., $\CC P^2$, which actually has nontrivial self-maps of degree $d^2$ for any $d$.

On the other hand, the calculation used to prove Theorem \ref{thm:MC-lower-bound} also gives lower bounds, given a manifold $M$ satisfying the hypotheses, on the stable Morse complexity of homology classes that pair nontrivially with the cohomology of the compact dual of $K \backslash G$.

\subsection{Morse complexity of nullcobordisms}
Gromov \cite[\S5$\frac{5}{7}$.I]{GrPC} suggested studying the minimal Morse complexity of a nullcobordism of a manifold $M$ and sketched a way of showing that, when the tangent bundle of $M$ has trivial characteristic classes, then this is linear in the Morse complexity of $M$.  Here is a result in this direction that derives from a simple case of this sketch:
\begin{thm}
  If $M$ is a framed $n$-manifold for $n \not\equiv 3 \mod 4$, then it is
  nullcobordant via
  \[\leq 2\rk \pi_1(M)+\sum_{2 \leq i<n/2}(\rk H_i(M)+2\#\{\text{torsion generators of }H_i(M)\})+\text{const}_n\]
  surgeries.
\end{thm}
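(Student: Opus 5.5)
Since $M$ is framed, the tangent bundle is stably trivial and the class $[M]\in\Omega^{fr}_n=\pi_n^s$ is finite for $n>0$; the only obstructions to framed nullcobordism are the stable homotopy class and, in dimensions $\equiv 3 \bmod 4$, the subtlety that surgery below the middle dimension cannot always finish (the Kervaire--Milnor $bP$ issue for the boundary). The plan is to perform framed surgery below the middle dimension to kill homotopy, counting surgeries, and then finish with a bounded number of additional moves depending only on $n$. Each surgery on $M$ is the trace of attaching a handle to $M\times[0,1]$, so the total count of surgeries bounds the nullcobordism's handle count.

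\emph{Step 1: fundamental group.} Represent a generating set of $\pi_1(M)$ by $\rk\pi_1(M)$ disjoint embedded circles (for $n\ge 3$); since $M$ is framed, each has trivial normal bundle compatible with the framing, so we can do framed surgery on them, killing $\pi_1$. This kills the generators but may create new classes; I will account for the resulting change in $H_2$ by an additional $\rk\pi_1(M)$ surgeries. More precisely, after surgery the new $H_2$ acquires at most one new class per circle, namely the belt spheres/relations, giving the factor $2$.

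\emph{Step 2: homology in degrees $2\le i<n/2$.} Now $M$ is simply connected. Proceed inductively in $i$: by Hurewicz, $\pi_i\cong H_i$ once lower homology vanishes. For each free generator of $H_i$ do one framed surgery on an embedded $S^i$ (embedding by general position since $2i<n$; the normal bundle is trivial by the framing, using that $\pi_i(\SO)\to$ stable is injective in this range, with framing chosen to extend). A free generator dies with no new homology in degree $i$ when $2i+1<n$. For a torsion generator of order $k$, surgery on $S^i$ kills it but introduces a new class in $H_{i+1}$ of the corresponding order; a second surgery then kills that. This explains the count $\rk H_i+2\#\{\text{torsion}\}$. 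The surgeries in degree $i$ also affect $H_{i+1}$ only through these accounted classes, by the long exact sequence of the trace, so the induction tallies correctly.

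\emph{Step 3: finishing, the main obstacle.} After Step 2 the manifold $M'$ is $(\lfloor n/2\rfloor-1)$-connected and framed, so it is a highly connected framed manifold whose homology is concentrated in the middle one or two degrees, with uncontrolled rank. I need to show it is framed nullcobordant with $\text{const}_n$ handles independent of this rank. I would first use Lemma \ref{lem:openbook}/Proposition \ref{prop:odd-d}-style reasoning or directly Kervaire--Milnor: for $n$ even, middle-dimensional surgery reduces to an algebraic form (with signature zero since framed), and the obstruction lies in $L_n(1)$, so one can reduce to a homotopy sphere after surgeries; but counting these surgeries requires care. My first attempt is to rather argue that $M'$ is framed cobordant, via the trace of the Step 2 surgeries run in reverse, to a representative from a finite list of framed manifolds realizing $\pi_n^s$ (finite group), each with bounded nullcobordism, and that the difference is handled by Winkelnkemper/Quinn: highly connected signature-zero framed manifolds for $n\not\equiv 3 \bmod 4$ are boundaries of handlebodies built from only middle handles plus a constant, by a plumbing-type description. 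The hard point is ensuring the middle-dimensional handles needed equal those already counted (they are dual to the ones attached), which I expect uses that the exclusion $n\equiv 3\bmod 4$ avoids the nontrivial $bP_{n+1}$ linking-form obstruction.
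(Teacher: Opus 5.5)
\textbf{Steps 1 and 2.} These are the same Kervaire--Milnor bookkeeping the paper uses: at most one new $H_2$ class per circle, hence $2\rk\pi_1(M)$, and two surgeries per torsion generator. One detail needs correcting. Surgery on a sphere representing a torsion class $x$ of order $k$ creates a \emph{free} class in $H_{i+1}$, not one of order $k$. The reason is that in the exact sequence of the trace $W$, the kernel of $H_{i+1}(W,M)\cong\ZZ\to H_i(M)$, $1\mapsto x$, is $k\ZZ$. This matters for your count: if the new class were torsion, killing it would spawn yet another class one degree higher, and the count would not close.

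\textbf{Step 3 is a genuine gap.} You try to nullcobord the $(\lfloor n/2\rfloor-1)$-connected manifold $M'$ with $\text{const}_n$ handles regardless of its middle homology. That is impossible. If $\partial W=N$, the exact sequence of $(W,N)$ together with Lefschetz duality gives $\sum_i b_i(N)\le 2\sum_i b_i(W)$, so $W$ needs at least $\frac12\sum_i b_i(N)$ handles. For example, take $n=5$ and $M=\#_k(S^2\times S^3)$. Your Steps 1--2 leave it untouched, since $2i+1<5$ leaves no $i\ge 2$, and it needs at least $k+1$ handles.

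The middle-dimensional surgeries must be counted, and the statement already budgets for them. For $n=2m+1$ the range $2\le i<n/2$ includes the middle degree $i=m$, which your Step 2 never uses. The paper simply continues Kervaire--Milnor's procedure through the middle dimension until a homotopy sphere is reached. The term $\text{const}_n$ only pays for nullcobording that sphere, of which there are finitely many possibilities. In $M^{2m+1}$ the middle-dimensional count goes as follows.
\begin{itemize}
\item A primitive free class in $H_m$ dies with one surgery, because a dual class in $H_{m+1}$ makes the meridian null-homologous in the complement.
\item For a torsion class $x$ with $m$ even, linking of disjoint $m$-cycles is skew-symmetric. The linking of $x$ with its framing push-off is also symmetric, since the push-offs along $v$ and $-v$ are isotopic in the complement of $x$. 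Hence that self-linking is $0$. One surgery then trades $x$ for a free meridian class, and a second surgery kills it, giving the factor $2$.
\item For $m$ odd, i.e.\ $n\equiv 3\bmod 4$, the self-linking need not vanish. Kervaire--Milnor's surgery then only lowers the order of the torsion, so the number of steps is not bounded by the number of generators.
\end{itemize}
That last point is why $n\equiv 3\bmod 4$ is excluded. It is not a $bP_{n+1}$ obstruction: odd-dimensional framed surgery always reaches a homotopy sphere.

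For even $n$, the paper likewise just continues with middle-dimensional Kervaire--Milnor surgery rather than looking for a constant-size filling. The Winkelnkemper/Quinn/plumbing route you sketch does not supply the missing count.
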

\begin{proof}
  One follows the arguments of Kervaire and Milnor \cite{KM} to surger the
  manifold to a homotopy sphere.  In dimension $1$, one kills a generating set
  for $\pi_1$, but this may produce a corresponding set of extra generators for
  $H_2$.  In higher dimensions $k$, one kills a generating set for $H_k$, but
  killing torsion classes produces a $\ZZ$-summand one dimension higher.

  The nullcobordism of the resulting exotic sphere requires at most
  $\text{const}_n$ handles since there are a finite number of possibilities.
\end{proof}
When $n \equiv 3 \mod 4$, Kervaire and Milnor's construction \cite[\S6]{KM} in the middle dimension does not necessarily kill torsion one generator at a time, but merely reduces its size.  So while one can still bound the number of handles required, there is an extra term based on the number of prime factors in the cardinality of the torsion subgroup of $H_{\frac{n-1}{2}}(M)$, which is not bounded by the number of handles of $M$.  It remains to be seen how far the linearity result can be generalized.

In contrast, we now give the proof of Theorem \ref{intro:ex}, showing that in general, the minimal Morse complexity of at least a nullbordism over $B\pi_1M$ need not even be bounded as a function of the Morse complexity of $M$.

\begin{ex}
  According to Browder and Livesay \cite{BL} there are infinitely many smooth
  manifolds simple homotopy equivalent to $\mathbb{RP}^{4k+3}$ that are cobordant
  over $B\mathbb Z_2$.  (Since $\Omega_i(B\mathbb Z_2)$ is finite for odd $i$,
  the last condition does not need to be added explicitly, but nevertheless such
  cobordisms can be explicitly constructed.)  Chang and Weinberger \cite{ChW}
  extend this construction to any oriented closed manifold $M^{4k+3}$ whose
  fundamental group contains torsion.

  By Theorem \ref{thm:she}, manifolds in this family all have the same Morse
  complexity.  However, by work of Hirzebruch \cite{Hirz}, the Browder--Livesay
  invariant can be equivalently defined as twice the signature of the
  cobordism between them minus the signature of its twofold cover (in the more
  general situation of Chang and Weinberger, this is replaced by an $L^2$
  signature of the cobordism).  This gives a lower bound on the Morse complexity
  of the cobordism, and in particular shows that for non--simply connected
  targets one cannot always bound the Morse complexity of the smallest
  nullcobordism of a manifold in terms of the Morse complexity of the manifold.

  Note that this argument can be viewed as a twisted signature argument, like
  the proofs of Prop.~\ref{prop:PiSigmas} and Theorem \ref{thm:MC-lower-bound},
  using bundles with finite structure group; although in the setting of closed
  manifolds, using finite structure group would give trivial invariants since
  signature is multiplicative in covers.  In contrast, similar examples of
  $M^{4k+3}$ that are hard to cobord over $B\pi$ can be constructed in the
  setting of lattices in $BG$ using the bundles that arise in that setting, in
  particular examples where the group is torsion-free.
\end{ex}

One can give other examples with torsion-free fundamental group using surgery theory and products of surface groups.  Let $M$ be a $5$-manifold whose fundamental group $\pi$ is the product of three genus $2$ surface groups.  The product of these surfaces with the Milnor manifold constructed from $k$ copies of $E_8$, mapping to $S^8$, gives an element of the group $L_{14}(\pi) \cong L_6(\pi)$.  Using Wall's realization theorem for this element of $L_6(\pi)$, one obtains a cobordism from $M$ to $M'$, a simple homotopy equivalent manifold, whose surgery obstruction is the given element.  Now the argument is the same as above: the normal cobordism ``needs'' to have enough handles that a product of surface bundles over it can have large signature.

\subsection{CW complexity}
An additional complexity measure in a similar spirit is the CW complexity of a cell complex: the minimal number of cells required to construct it (up to homotopy equivalence, naturally).  Similar to the Morse complexity of bordism classes, one can also define the CW complexity of a class in $H_d(X)$ to be the minimal CW complexity of its pseudomanifold representatives.  This is clearly bounded above by the Morse complexity of the homology class (times a dimensional constant).  Under closer examination, however, it turns out to be uniformly bounded in most cases when $X$ is a finite complex.  In all odd dimensions, this follows from Prop.~\ref{prop:odd-d} and the fact that bordism rationally surjects onto homology.  In even dimensions, there is a bit more work to do:
\begin{thm}
  For any even $d\ge6$ and any finite CW complex $X$, elements of $H_d(X)$ have
  uniformly bounded CW complexity.
\end{thm}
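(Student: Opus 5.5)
The approach is to exploit the extra flexibility of pseudomanifolds: they may have singularities, and then the signature-type obstructions of Theorem \ref{thm:sig} and Theorem \ref{thm:MC-lower-bound} no longer apply. Since $H_d(X)$ is finitely generated, it suffices to show that for each generator $\alpha$ (or a rational basis), all multiples $n\alpha$ have pseudomanifold representatives of bounded CW complexity. Indeed, the CW complexity of a sum is at most the sum of the complexities plus a constant, since pseudomanifolds can be joined by connected sum along top cells.

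First I would represent $k\alpha$ by a manifold $f:M\to X$ with $\pi_1(M)\to\pi_1(X)$ an isomorphism. By the discussion after Theorem \ref{thm:MC-lower-bound}, rationally every homology class is represented by a framed manifold, so we may take $M$ stably framed with trivial $L$-class. The only remaining obstruction to Morse triviality is then the image of $f_*[M]$ in $H_{d-4i}(B\pi;\QQ)$, and only the top piece $f_*[M]\in H_d(B\pi;\QQ)$ survives. To kill it, I would pass to a pseudomanifold. Using the open-book technology of Lemma \ref{lem:openbook} and Winkelnkemper/Quinn, the plan is to cut $M$ along a codimension-one submanifold to obtain an ``almost open book.'' As in the Remark after Proposition \ref{prop:openbook}, $M$ is then an almost open book whose failure is concentrated in the middle-dimensional form, which is a finite algebraic datum.

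Next, I would replace the part of $M$ carrying the middle-dimensional homology by a cone, or a more general singular piece. If a codimension-zero submanifold $W\subset M$ has boundary $\partial W$, coning off $\partial W$ produces a pseudomanifold still mapping to $X$, provided $W\to X$ extends over the cone. It does extend whenever $\partial W\to X$ is null-homotopic, or at least when $\partial W$ maps through a simply connected piece. Let $M_k\to M$ be the $k$-fold branched cover along the binding of the almost open book. The page pieces then contribute bounded complexity, and the unbounded middle-dimensional homology of $M_k$ lives in a region that is a $k$-fold plumbing or cyclic gluing of copies of a fixed cobordism. I would collapse this region by coning it off along its boundary, which has bounded topology because it is the binding neighborhood. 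For $d\ge 6$ the binding has dimension $d-2\ge 4$. Surgery below the middle dimension then lets me arrange that the relevant boundaries are simply connected and that the maps to $X$ over them are null-homotopic after adjusting by finitely many cells. The resulting pseudomanifold represents $k\alpha$, up to a degree count, and has a cell structure with boundedly many cells independent of $k$.

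The main obstacle is making the coning step actually compatible with the map to $X$ while keeping the homology class correct. Coning off a piece changes the fundamental class by the class of that piece relative to its boundary, so I must check that the removed region maps to a degenerate (null) relative cycle in $X$. I would try to arrange this by choosing the region to factor through a neighborhood of the binding, where the map to $X$ factors through a lower-dimensional complex (the $(d-2)$-dimensional binding times a disk). This is where $d\ge6$ should enter, as in the failure of the analogous statement in dimension 4. If that fails, my fallback is to use singularities of dimension $2$, as mentioned in the introduction. In that approach I would kill the middle-dimensional surgery obstruction by attaching cones on links that are homology spheres with prescribed fundamental group, which exist in dimensions $\ge 5$. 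This would again give bounded cell counts.
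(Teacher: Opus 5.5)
There is a genuine gap at the coning step, which is the core of your argument. In the $k$-fold branched cover $M_k$ of an almost open book, the unbounded middle-dimensional homology sits in the $k$ cyclically glued copies of the fixed non-product cobordism $W$. Up to product collars, these copies make up the whole complement of the binding neighborhood, so they carry the class $k\alpha$. This leaves you with a dilemma.

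\emph{Coning off each copy of $W$ separately.} This introduces on the order of $k$ cones, so the number of cells is unbounded. The map also does not extend over these cones: $\partial W$ contains pages, whose loops are essential in $X$. So no surgery compatible with the map to $X$ can make these boundaries simply connected.

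\emph{Coning off the whole union along $B\times S^1$.} This is the only boundary of bounded topology available, but coning there throws away essentially all of $k\alpha$.

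Your own criterion, that the excised region be a null relative cycle in $X$, fails precisely because of the higher-signature obstruction of Theorem \ref{thm:sig}: if those pieces were null, no singularities would be needed. The fallback via homology-sphere links is not developed. Homology spheres also have perfect fundamental groups, so their $\pi_1$ cannot be prescribed freely. Finally, the paper does not say the statement fails in dimension $4$; that case is left open.

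The missing idea is to put the singularities on a \emph{fixed} low-dimensional complex, rather than on pieces that multiply with $k$. Removing such a complex kills the open-book obstruction entirely.

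Take any manifold representative $f:M\to X$ of $\alpha$; no framing is needed. Let $(M^c,\partial M^c)$ be the complement of a regular neighborhood of the $2$-skeleton $M^{(2)}$. Since $M^{(2)}$ has codimension at least $3$, the map $\pi_1(\partial M^c)\to\pi_1(M^c)$ is an isomorphism. For such manifolds, Quinn's theorem \cite[Theorem 1.1(4)]{Quinn} lets an open book on the odd-dimensional $\partial M^c$ (which exists by Lawson) extend over $M^c$. There is no middle-dimensional residue at all.

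Running the argument of Lemma \ref{lem:openbook} for pairs then gives degree-$k$ maps $h_k:(M_k,\partial M_k)\to(M^c,\partial M^c)$ with $\MC(M_k)$ uniformly bounded. Let $g:\partial M^c\to M^{(2)}$ be the tubular-neighborhood projection. Then $P_k=M_k\cup_{g h_k}M^{(2)}$ is a pseudomanifold representing $k\alpha$ with boundedly many cells.
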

\begin{proof}
  It suffices to show that for $\alpha \in H_d(X)$, integer multiples $k\alpha$
  have uniformly bounded CW complexity.  Then the theorem follows by finite
  generation.  Indeed we need only show this for $\alpha$ which have manifold
  representatives, since the subgroup of such $\alpha$ has finite index.

  Let $f:M \to X$ be a representative of $\alpha$, and fix a cell structure for
  $M$.  Let $(M^c, \partial M^c)$ be the complement of a tubular neighborhood
  of the $2$-skeleton $M^{(2)}$.  Then $\partial M^c$ has an open book
  decomposition since it is odd-dimensional.  Moreover, by Quinn's
  \cite[Theorem 1.1(4)]{Quinn}, since $\pi_1(\partial M^c)=\pi_1(M^c)$, one can
  find an open book decomposition on $\partial M^c$ which extends to $M^c$.

  The proof of Lemma \ref{lem:openbook} holds verbatim for bordism of pairs, in
  particular for the identity map $(M^c, \partial M^c) \to (M^c, \partial M^c)$.
  Thus for every $k$, we have a map
  $h_k:(M_k, \partial M_k) \to (M^c,\partial M^c)$ such that the $M_k$ have
  bounded Morse complexity.  Let $g:(M^c, \partial M^c) \to (M,M^{(2)})$ extend
  the projection map from the boundary of the tubular neighborhood to $M^{(2)}$.
  Then $P_k=M_k \cup_{gh_k} M^{(2)}$ is a pseudomanifold with singular set
  $M^{(2)}$, and $fgh_k$ factors through a representative $p_k:P_k \to X$ of
  $k\alpha$.
\end{proof}
CW complexity of $H_2(X)$ is equivalent to simplicial volume and hence nontrivial.  Dimension $4$ rests as the only remaining open case.

\section{Rational surjectivity of the Chern signature} \label{S:calc}
We now discuss Theorem \ref{thm:MC-lower-bound} and its proof in more detail.

Let $G$ be a semisimple, real Lie group and fix a maximal compact subgroup $K$. Write $R(G)$ for the Grothendieck ring generated by isomorphism classes of finite-dimensional, complex Hermitian representations of $G$. The Chern signature is the composite ring homomorphism
\[\gamma:\xymatrix@R=0pc{R(G) \ar[r]^-{\mathrm{sig}} & K(BG) \ar[r]^-{\ch} & H^{\ev}(BG) \\
  V \ar@{|->}[r] & \mathbb V^+-\mathbb V^- \ar@{|->}[r] & \ch(\mathbb V^+-\mathbb V^-).}
\]
To define $\mathrm{sig}$, we write $V=V^+\oplus V^-$ for the decomposition of $V$ into positive/negative-definite summands with respect to the action of $K<G$, and we write $\mathbb V^{\pm}:=(EG\times V^{\pm})/G \to BG$ for the associated vector bundles. As usual, $\ch$ denotes the Chern character. We write $H^{\ev}(X):=\prod_{i\ge0} H^{2i}(X;\mathbb Q)$, and view this as a topological space with the product topology. The main result of this section is as follows. 

\begin{thm}\label{thm:rational-surjectivity}
Let $G_0$ be a semisimple, real Lie group. Assume that $G_0$ has discrete series representations. Then there is a group $G$ with finite center and the same Lie algebra as $G_0$ such that the morphism $\gamma\otimes\mathbb Q: R(G)\otimes\mathbb Q \to H^{\ev}(BG)$ has dense image. 
\end{thm}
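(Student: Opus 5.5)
The plan is to reduce the statement to a computation on the maximal compact subgroup and its complexification, following Lusztig's argument for $\Sp(2n,\RR)$. Since $BG\simeq BK$, we have $H^{\ev}(BG)=H^{\ev}(BK)=\widehat{\QQ[\mathfrak t]}^{W_K}$ for a maximal torus $T\subset K$. Discrete series exist if and only if $\rk K=\rk G$ (Harish-Chandra), so $T$ is also a compact Cartan subgroup of $G$. I will use this freedom to replace $G_0$ by a finite cover or quotient $G$ with finite center, chosen so that the finite-dimensional representations of $G$ are exactly the restrictions of holomorphic representations of a complexification $G_\CC$. We also want $K_\CC\subset G_\CC$, with the compact form $G_u\supset K$ sharing the torus $T$. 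For example, take $G$ to be the real points of the simply connected complex group, or its adjoint quotient, adjusting as needed.

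The key step is to compute $\mathrm{sig}(V)\in R(K)$ for an irreducible $V$ of $G$. The Hermitian form on $V$ is the $G$-invariant form that is definite on each $K$-isotypic piece. Concretely, with a Cartan involution $\theta$ and $\mathfrak g=\mathfrak k\oplus\mathfrak p$, the sign on a $K$-type of $V$ is governed by the eigenvalue of the element of $T$ (or of $Z(K)$, after possibly passing to a cover) implementing $\theta$. Thus there is an element $z$ of $T$, with $\mathrm{Ad}(z)=\theta$, such that $\mathrm{sig}(V)=\pm z\cdot V|_K$, where $z$ acts by $\pm1$ on the $K$-types. Its character is $\chi_V(zt)$ for $t\in T$.

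Taking Chern characters, $\gamma(V)(X)=\chi_V(z\exp X)$ for $X\in\mathfrak t$, expanded as a power series in $X$. So it suffices to show that the functions $X\mapsto \chi_V(z\exp X)$, for $V$ ranging over irreducible representations of $G_u$, have Taylor expansions at $0$ whose truncations span all $W_K$-invariant polynomials of each bounded degree. This is the statement that $\QQ$-linear combinations of such characters, restricted to the coset $zT$, are dense in $W_K$-invariant formal power series at $z$. The characters of $G_u$ span the $W_G$-invariant Fourier polynomials on $T$. The centralizer of $z$ in $G_u$ is exactly $K$ (its Lie algebra is $\mathfrak k$), so the stabilizer of $z$ in $W_G$ is $W_K$, at least when $Z(G)$ is chosen appropriately. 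Averaging a $W_K$-invariant trigonometric polynomial over $W_G$, the terms supported near the other points $wz$ can be separated from those near $z$ by multiplying by $W_G$-invariant functions vanishing to high order at $wz\neq z$. Taylor polynomials of trigonometric polynomials are dense (Stone--Weierstrass, or the exponentials $e^{\lambda(X)}$ spanning power series via finite differences). This gives density, and rationality follows since characters have integer weights.

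The main obstacle is the sign computation in the second paragraph. One must check that the invariant Hermitian form really has signature given by evaluation at $z$, and that $z$ can be realized inside $G$ (not just its adjoint group) with centralizer exactly $K$ and $W_G$-stabilizer exactly $W_K$. This is where the choice of $G$ with the same Lie algebra matters, and where Lusztig's $\Sp$ argument must be generalized. If a uniform argument via $\theta=\mathrm{Ad}(z)$ fails, I would fall back on a case-by-case check over the classification of equal-rank real forms, which the introduction suggests may be unavoidable.
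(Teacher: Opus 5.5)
Your argument is correct in outline, and it takes a genuinely different, classification-free route. The paper works group by group. For each simple $G$ with $\rk G=\rk K$ it picks explicit representations (standard, spin, the $27$- and $56$-dimensional ones, adjoint) and computes $\gamma$ of these and of their exterior powers from weights. It then extracts generators of $H^{\ev}(BK)$ as lowest-order terms via Adams operations (Lemmas~\ref{lem:adams-commute}--\ref{lem:adams-ops}, Corollary~\ref{cor:dense}).

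You instead use three ingredients:
\begin{itemize}
\item equal rank makes $\theta$ inner, so $\theta=\mathrm{Ad}(z)$ with $z\in T$;
\item $\gamma$ can then be evaluated on \emph{all} irreducible representations at once, as $X\mapsto\epsilon_V\chi_V(ze^X)$;
\item density reduces to the fact that the completion of $\CC[T_\CC]^W$ at the image of $z$ in $T_\CC/W$ is $\CC[[\mathfrak t]]^{W_z}$, together with $W_z=W_K$.
\end{itemize}
This gives exactly the uniform proof the paper says it lacks, handles all semisimple $G$ simultaneously, and explains the paper's case analysis. For instance, for $G_u=\SO(p+q)$ one has $Z_{G_u}(z)=S(\mathrm{O}(p)\times\mathrm{O}(q))$, whose Weyl group changes the sign of the Euler classes; that is why the paper needs spin representations for $\SO(p,q)$. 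The paper's computations, in exchange, give small explicit generating sets (one or two representations plus Adams operations), i.e.\ concrete flat bundles realizing the Matsushima classes.

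The uniform argument does not fail, so no case-by-case fallback is needed. Four points should be made precise.

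\begin{enumerate}
\item \textbf{The Hermitian form.} Since $z^2$ is only central, on an irreducible $V$ use $A=\epsilon_V\rho(z)$, with $\epsilon_V$ a root of unity chosen so that $A^2=1$. For the $G_u$-invariant inner product $\langle\,,\rangle_u$, $\mathfrak k$ acts skew-adjointly and $\mathfrak p$ self-adjointly. $A$ commutes with $\rho(\mathfrak k)$ and anticommutes with $\rho(\mathfrak p)$. Hence $\langle Av,w\rangle_u$ is $G$-invariant and Hermitian, and $V^\pm$ are the $\pm1$-eigenspaces of $A$. This settles your ``main obstacle'', and $\epsilon_V\mu(z)=\pm1$ shows each $\gamma(V)$ is rational.

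\item \textbf{Choice of $G$ and $W_z=W_K$.} The equality $W_z=W_K$ needs $Z_{G_u}(z)$ to be connected, not merely to have Lie algebra $\mathfrak k$. Take $G_u$ simply connected, and take $G$ to be the identity component of the real points of the simply connected complexification. Bott's theorem says centralizers of elements in simply connected compact groups are connected, so $Z_{G_u}(z)=K$.

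\item \textbf{The separation step.} The multipliers cannot be $W$-invariant, since such a function takes the same values at $z$ and at $wz$. Instead, use the Chinese remainder theorem in $\CC[T_\CC]$ to choose a trigonometric polynomial with the prescribed $W_K$-invariant jet at $z$ that vanishes to high order at the other points of $Wz$. Then average over $W$.

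\item \textbf{Rationality.} Each $\gamma(V)$ has rational coefficients, so surjectivity of the truncation maps over $\CC$ implies surjectivity over $\QQ$.
\end{enumerate}
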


Theorem \ref{thm:rational-surjectivity} generalizes Lusztig's result for $G=\Sp_{2n}(\RR)$, as discussed in \S\ref{sec:local-systems}.  Theorem \ref{thm:rational-surjectivity} implies Theorem \ref{thm:MC-lower-bound}. To explain this, we use the following Proposition \ref{prop:UmodK} and Corollary \ref{cor:dense-image}.

\begin{prop}\label{prop:UmodK}
Let $K\subset U$ be compact, connected simple real Lie groups. If $\mathrm{rank}(K)=\mathrm{rank}(U)$, then the odd Betti numbers of $U/K$ are zero, and the natural map $H^*(BK)\to H^*(U/K)$ is surjective. 
\end{prop}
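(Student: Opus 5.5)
The proof rests on two classical facts: the fibration $U/K \to BK \to BU$, and the Weyl-group description of rational cohomology of classifying spaces. Fix a maximal torus $T \subset K$. Since $\mathrm{rank}(K)=\mathrm{rank}(U)$, $T$ is also a maximal torus of $U$. Rationally,
\[H^*(BK;\QQ)\cong H^*(BT;\QQ)^{W_K}, \qquad H^*(BU;\QQ)\cong H^*(BT;\QQ)^{W_U},\]
where $W_K \subset W_U$ are the respective Weyl groups. Both rings are polynomial rings on even-degree generators (Chevalley). Moreover $H^*(BT)^{W_K}$ is a free module over $H^*(BT)^{W_U}$ of rank $|W_U|/|W_K|$.

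For the odd Betti numbers, I would use the Bruhat/Morse argument. Equal rank means $T$ acts on $U/K$ with isolated fixed points, namely the finite set $W_U/W_K$. A generic circle $S^1 \subset T$ has the same fixed-point set. For a generic element $X$ of the Lie algebra of $T$, the height function $\langle \mathrm{Ad}(g)X, X_0\rangle$ is then a perfect Morse function whose critical points are exactly these fixed points. All indices are even, because the negative normal spaces are complex $T$-representations. Hence $H^{\mathrm{odd}}(U/K)=0$ and $\chi(U/K)=|W_U/W_K|$. A quicker alternative: the localization theorem gives $\dim H^*(U/K)\ge$ the number of fixed points with equality iff equivariantly formal, and comparing with $\chi$ forces the odd cohomology to vanish.

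For surjectivity, consider the Serre spectral sequence of $U/K \to BK \to BU$ with rational coefficients. $BU$ is simply connected, so there is no local-coefficient issue. The $E_2$ term is $H^*(BU)\otimes H^*(U/K)$, which is concentrated in even total degree by the first part, so the spectral sequence degenerates at $E_2$. Degeneration means the fiber is totally non-homologous to zero, so by Leray--Hirsch the restriction $H^*(BK)\to H^*(U/K)$ is surjective. Its kernel is the ideal generated by $H^{>0}(BU)$, giving $H^*(U/K)\cong H^*(BK)\otimes_{H^*(BU)}\QQ$. As a sanity check, this matches $\dim = |W_U|/|W_K| = \chi(U/K)$.

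The main obstacle is the vanishing of the odd cohomology, since surjectivity follows formally once it is known. I would present the Morse-theoretic (or localization) argument carefully, citing Bott or Borel for the perfect Morse function on $U/K$, or equivalently Borel's computation of $H^*(U/K)$ in the equal-rank case. Simplicity of $U$ and $K$ is not really needed: connectedness and equal rank suffice.
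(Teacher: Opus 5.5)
Your surjectivity step is the paper's argument: the Serre spectral sequence of $U/K\to BK\to BU$ degenerates because everything sits in even total degree, so $H^*(BK)\to E_\infty^{0,*}=H^*(U/K)$ is onto. Your remark that simplicity is never used is also correct, and it matters, since the paper applies the proposition to non-simple $K$ such as $S(\U(p)\times\U(q))$.

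The paper does not prove the vanishing of odd Betti numbers; it cites Borel. The two arguments you offer for it do not work as stated.

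\emph{The Morse function.} The height function $g\mapsto\langle \mathrm{Ad}(g)X,X_0\rangle$ descends to $U/K$ only if $K$ centralizes $X$. For generic $X\in\mathfrak t$ the centralizer is $T$, so what you have described is the standard perfect Morse function on the flag manifold $U/T$, not on $U/K$. In fact, when $K$ is semisimple of full rank, no nonzero $X\in\mathfrak u$ is centralized by $K$: such an $X$ would commute with $\mathfrak t$, hence lie in $\mathfrak t\subseteq\mathfrak k$, and be central in $\mathfrak k$. So under the hypotheses of the proposition, $U/K$ is never an adjoint orbit. An example is $S^{2n}=\SO(2n+1)/\SO(2n)$, which for $n\ge 2$ is not even symplectic.

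\emph{The localization alternative.} This is circular. Localization gives $\dim H^*(U/K)\ge\#(U/K)^T=\chi(U/K)$, i.e.\ $b_{\mathrm{even}}+b_{\mathrm{odd}}\ge b_{\mathrm{even}}-b_{\mathrm{odd}}$, which holds for every space. Concluding $b_{\mathrm{odd}}=0$ requires the reverse inequality, and that is precisely equivariant formality, the thing you are trying to prove.

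A minimal repair keeps your Morse idea but applies it where it is valid, on $U/T$. Then pass to $U/K$ through the bundle $\pi\colon U/T\to U/K$ with fiber $K/T$. Since $\chi(K/T)=|W_K|\neq 0$, the Becker--Gottlieb transfer satisfies $\tau\circ\pi^*=|W_K|\cdot\mathrm{id}$. Hence $\pi^*$ is injective on rational cohomology, and $H^{\mathrm{odd}}(U/K;\QQ)$ embeds in $H^{\mathrm{odd}}(U/T;\QQ)=0$.

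Equivalently, Leray--Hirsch applies. The map $H^*(BT)\to H^*(K/T)$ is onto, and it factors through $H^*(U/T)$ because the $T$-bundle $U\to U/T$ restricts to $K\to K/T$.

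Another route uses the freeness of $H^*(BK)$ over $H^*(BU)$ that you already stated. The Eilenberg--Moore spectral sequence then collapses to $H^*(U/K)\cong\QQ\otimes_{H^*(BU)}H^*(BK)$, which gives both parts of the proposition at once.

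Or simply cite Borel, as the paper does.
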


\begin{proof}
The first statement is explained in \cite[\S26]{borel}. For the second statement, consider the Serre spectral sequence for the fibration $U/K\to BK\to BU$. Since $U/K$ and $BU$ have cohomology concentrated in even degrees, there are no nontrivial differentials out of the groups $E_r^{0,q}$ for $r\ge2$ and $q\ge0$. Thus $E_2^{0,q}\cong H^q(U/K)$ survives to the $E_\infty$ page, for each even $q\ge0$, which implies that $H^*(BK)\to H^*(U/K)$ is surjective. 
\end{proof}

For the next corollary, recall that for a symmetric space $G/K$, the compact dual symmetric space has the form $U/K$, where $U$ is a maximal compact subgroup of the complexification of $G$.

\begin{cor}\label{cor:dense-image}
Let $G$ be as in the statement of Theorem \ref{thm:rational-surjectivity}. 
Write $U/K$ for the compact dual symmetric space of $G/K$. 
Then the composition 
\[R(G)\otimes\QQ\to H^{\ev}(BG)\cong H^{\ev}(BK)\to H^{\ev}(U/K)\cong H^*(U/K)\]
is surjective. 
\end{cor}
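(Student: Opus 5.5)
The plan is to combine Theorem \ref{thm:rational-surjectivity} with Proposition \ref{prop:UmodK}, using finite-dimensionality of $H^*(U/K)$ to turn ``dense image'' into ``surjective''. The composition factors as
\[R(G)\otimes\QQ \xrightarrow{\gamma\otimes\QQ} H^{\ev}(BG) \xrightarrow{\cong} H^{\ev}(BK) \xrightarrow{r} H^{\ev}(U/K).\]
The middle isomorphism comes from the homotopy equivalence $BK\simeq BG$, since $G/K$ is contractible. The last map $r$ is restriction along $U/K\to BK$, the classifying map of the principal $K$-bundle $U\to U/K$. The last identification $H^{\ev}(U/K)\cong H^*(U/K)$ is the first statement of Proposition \ref{prop:UmodK}: the odd Betti numbers vanish because $\operatorname{rank}K=\operatorname{rank}U$. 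That rank equality is exactly the Harish-Chandra criterion for $G$ to have discrete series.

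First, I show that $r$ is surjective. If $G$ is simple, this is the second statement of Proposition \ref{prop:UmodK}. In general, $G$ with finite center is, up to isogeny, a product of simple factors, and $U/K$ is correspondingly a product $\prod U_i/K_i$. Each factor has equal rank, since discrete series of $G$ forces discrete series of every factor. Compact factors contribute a point. Passing to finite covers changes nothing rationally, so the Künneth formula on both sides gives surjectivity of $H^*(BK)\to H^*(U/K)$. Some care is needed where $K_i$ is not semisimple (Hermitian case) or $U_i$ is not simply connected. The spectral sequence argument of Proposition \ref{prop:UmodK} still works because it only uses even concentration of $H^*(BU)$ and $H^*(U/K)$; rationally $H^*(BU)$ is polynomial on even generators for any compact connected $U$.

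Second, I check continuity. The map $r$ is continuous for the product topology on $H^{\ev}(BK)=\prod_i H^{2i}(BK;\QQ)$, because $U/K$ is a finite complex. Indeed $r$ factors through the projection onto the finitely many factors $\prod_{2i\le\dim U/K}H^{2i}$, and is linear on that finite-dimensional space. Hence $r\circ(\gamma\otimes\QQ)$, with the isomorphism inserted, has image dense in $H^*(U/K)$.

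Finally, the image is a $\QQ$-linear subspace of the finite-dimensional $\QQ$-vector space $H^*(U/K;\QQ)$. Here the topology is the product topology of discrete (or, equivalently for this purpose, finite-dimensional) factors, as in the definition of $H^{\ev}$. Any linear subspace of a finite-dimensional space is closed, so dense means all of it. The main obstacle is bookkeeping the topology. If each $H^{2i}(\,\cdot\,;\QQ)$ carries the discrete topology, then density in a finite product is literally equality on the relevant finitely many degrees. If instead the real topology is intended, one notes that a $\QQ$-subspace of $\QQ^n$ that is dense in $\RR^n$ need not be everything. In that case I would use that the image comes from representations defined over $\QQ$-rational Chern characters, so it is a $\QQ$-subspace whose $\RR$-span is everything. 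That forces it to contain a $\QQ$-basis, giving surjectivity onto $H^*(U/K;\QQ)$.
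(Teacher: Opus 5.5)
Your proposal is correct and follows essentially the same route as the paper. The paper lifts $x\in H^{2k}(U/K)$ to $BG$ via Proposition \ref{prop:UmodK}, then uses density of $\gamma\otimes\QQ$ to find an element agreeing with the lift up to degree $\dim U/K$, and then projects. Here density means agreement up to any finite degree, i.e.\ the product of discrete factors, as in the proof of Lemma \ref{lem:densely-generate}. Your extra care with non-simple $G$ and non-semisimple $K$ only strengthens this.

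Your closing worry about the real topology is unnecessary, and one claim in it is false. A proper $\QQ$-subspace of $\QQ^n$ lies in a rational hyperplane, so it is never dense in $\RR^n$.
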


\begin{proof}[Proof of Corollary \ref{cor:dense-image}]
By a theorem of Harish-Chandra (see e.g.\ \cite[Thm.\ 12.20]{knapp}), $G$ has discrete series representations if and only if rank$(G)=$ rank$(K)$, where rank denotes the maximum dimension of a Cartan subalgebra of the Lie algebra. Furthermore, the rank of $G$ is the rank of the maximal compact subgroup $U$ in the complexification of $G$. 
Now $\mathrm{rank}(K)=\mathrm{rank}(U)$ implies that $H^{\ev}(U/K)\cong H^*(U/K)$ by Proposition \ref{prop:UmodK} and because $U/K$ is a manifold. To show surjectivity, fix $x\in H^{2k}(U/K)$. By Proposition \ref{prop:UmodK}, we can lift $x$ to $x'\in H^{2k}(BG)$. Because we assume $\gamma$ has dense image, we can find $x''\in R(G)\otimes\QQ$ such that $\gamma(x'')$ agrees with $x'$ up to the dimension of $U/K$. Then $x''$ and $x'$ have the same image in $H^*(U/K)$, which is $x$ by construction. 
\end{proof}

To deduce Theorem \ref{thm:MC-lower-bound}, by the preceding results, if $\gamma\otimes\mathbb Q: R(G)\otimes\mathbb Q \to H^{\ev}(BG)$ has dense image, then each cohomology class in the image of the Matsushima homomorphism $H^*(U/K)\to H^*(K\backslash G/\Gamma)$ is the Chern signature of a local system on $K(\Gamma,1)\simeq K\backslash G/\Gamma$. Accordingly, the signatures associated to these cohomology classes give lower bounds on the Morse complexity of elements in $\Omega_*(K(\Gamma,1))$ whose higher signature class has nontrivial image in the homology of $U/K$. Compare with the discussion in \S\ref{sec:local-systems}. The same argument works if we replace $\Gamma$ by a lattice in a finite cover $G_0\to G$. If instead $\Gamma$ is a lattice in a (finite) quotient $G\twoheadrightarrow G_0$, then since $G$ is linear the pre-image of $\Gamma$ in $G$ is residually finite and projects to $\Gamma$ with finite kernel, so a finite-index subgroup of $\Gamma$ lifts to $G$, which suffices.



\subsection{Generalities related to the proof of Theorem \ref{thm:rational-surjectivity}}\label{sec:setup}
Our proof proceeds by direct computation for each simple $G$ with discrete series representations. (A less computational approach would be most desirable.) Applying Harish-Chandra's criterion $\mathrm{rank}(G)=\mathrm{rank}(K)$, of the classical simple $G$, those with discrete series are 
$\Sp_{2n}(\mathbb R)$, $\SO^*(2n)$, $\SU(p,q)$, $\Sp(p,q)$, and also $\SO(p,q)$ with $pq$ even. Among the exceptional $G$, we obtain the list in Table \ref{except-table}. 
\begin{table}
  \centering
\begin{tabular}{c|c|c|c|c}
$G$& $\text{Lie}(K)$& rank$(G)$ & rank$(K)$ & discrete series?\\\hline
$G_{2(2)}$&$\mathfrak{su}(2)\times\mathfrak{su}(2)$&2&2&yes\\
$F_{4(4)}$ & $\mathfrak{sp}(3)\times\mathfrak{su}(2)$&4&4&yes\\
$F_{4(-20)}$&$\mathfrak{so}(9)$&4&4&yes\\
$E_{6(6)}$&$\mathfrak{sp}(4)$&6&4&no\\
$E_{6(2)}$&$\mathfrak{su}(6)\times\mathfrak{su}(2)$&6&6&yes\\
$E_{6(-14)}$&$\mathfrak{so}(10)\times\mathfrak{so}(2)$&6&6&yes\\
$E_{6(-26)}$&$\mathfrak f_4$&6&4&no\\
$E_{7(7)}$&$\mathfrak{su}(8)$&7&7&yes\\
$E_{7(-5)}$&$\mathfrak{so}(12)\times\mathfrak{su}(2)$&7&7&yes\\
$E_{7(-25)}$&$\mathfrak e_6\times\mathfrak{so}(2)$&7&7&yes\\
$E_{8(8)}$&$\mathfrak{so}(16)$&8&8&yes\\
$E_{8(-24)}$&$\mathfrak e_7\times\mathfrak{su}(2)$&8&8&yes\\
\end{tabular}
\caption{Exceptional simple Lie groups.} \label{except-table}
\end{table}



While our proof requires a separate computation for each $G$, there are common ingredients. Some intermediate problems related to proving Theorem \ref{thm:rational-surjectivity} include computing $H^{\ev}(BG)$, finding ``sufficiently many" Hermitian representations of $G$, and determining criteria for density of a subring of $H^{\ev}(BG)$ that will be useful in our setting. In this section we address these, and provide the framework to understand the proof of Theorem \ref{thm:rational-surjectivity} in the next section.

\subsubsection*{ The rings $H^{\ev}(BG)$}\label{sec:coho}

In general, the ring 
\[H^*(BG)\cong H^*(BK)\equiv\bigoplus_{i\ge0}H^i(BK;\mathbb Q)\] is isomorphic to $H^*(BT)^W$, where $T\subset K$ is a maximal torus and $W=N_K(T)/T$ is the Weyl group. In each case, $H^*(BK)$ is isomorphic to a polynomial ring in finitely many variables, so $H^{\ev}(BK)\cong H^{\ev}(BG)$ is the ring of formal power series in the same variables. The computations we need are given in Table \ref{coh-table}. 
\begin{table}
\centering

  {\renewcommand{\arraystretch}{1.5}
\begin{tabular}{|c|c|c|c|c|}
\hline
$G$& $T$& $W$& $H^*(BT)^W$\\\hline\hline
$\U(n)$&$\U(1)^{\times n}$&$S_n$&$\mathbb Q[x_1,\ldots,x_n]^{S_n}$\\\hline
$\SU(n)$&$\U(1)^{\times (n-1)}$&$S_n$&$\frac{\mathbb Q[x_1,\ldots,x_n]^{S_n}}{(x_1+\cdots+x_n)}$\\\hline
$\Sp(n)$&$\U(1)^{\times n}$&$(\mathbb Z_2)^n\rtimes S_n$&$\mathbb Q[x_1^2,\ldots,x_n^2]^{S_n}$\\\hline
$\SO(2n+1)$&$\SO(2)^{\times n}$&$(\mathbb Z_2)^n\rtimes S_n$&$\mathbb Q[x_1^2,\ldots,x_n^2]^{S_n}$\\\hline
$\SO(2n)$&$\SO(2)^{\times n}$&$(\mathbb Z_2)^{n-1}\rtimes S_n$&$\langle\mathbb Q[x_1^2,\ldots,x_n^2]^{S_n},x_1\cdots x_n\rangle$\\\hline
$E_6$&&&$\mathbb Q[I_2,I_5,I_6,I_8,I_9,I_{12}]$\\\hline
$E_7$&&&$\mathbb Q[I_2,I_6,I_8,I_{10},I_{12},I_{14},I_{18}]$\\\hline
\end{tabular}}
\caption{Cohomology rings of $BG$ for some $G$; see e.g.\ \cite[Ch.\ 1]{adams}.} \label{coh-table}
\end{table}

For the classical groups, we can take $T$ to be a (block) diagonal subgroup. Then the group $W$ is a group of (signed) permutation matrices, and the variable $x_i\in H^2(BT)$ is defined as the first Chern class $x_i=c_1(BL_i)$ of the line bundle $BL_i:BT\to B\U(1)$ induced by the $i$-th coordinate function $L_i: T\to \U(1)\cong\SO(2)$. 

To further explain the exceptional group examples, first consider $E_7$. We choose $T\hookrightarrow E_7$ to be the standard torus that factors through the subgroup $\big(\Spin(12)\times\SU(2)\big)/\mathbb Z_2\hookrightarrow E_7$. Here $\big(\Spin(12)\times\SU(2)\big)/\mathbb Z_2$ is the maximal compact subgroup of the real form $E_{7(-5)}$ and the complexification $E_{7(-5)}\to E_7(\mathbb C)$ restricts to an embedding between maximal compact subgroups $\big(\Spin(12)\times\SU(2)\big)/\mathbb Z_2\hookrightarrow E_7$. There are various choices of generating polynomials for $H^*(BT)^W$. One choice is the polynomials $I_2,I_6,I_8,I_{10},I_{12},I_{14},I_{18}$, where 
\[I_{2k}=\sum_{i=1}^62\big[(x_i+y)^{2k}+(x_i-y)^{2k}\big]+\sum_{\substack{\epsilon\in\{\pm1\}^6\\ \prod\epsilon_i=-1}}\left[\frac{1}{2}(\epsilon_1x_1+\cdots+\epsilon_5x_5)\right]^{2k}.\]
Up to a scalar, $I_{2k}$ is equal to the degree-$2k$ term in the power series $\sum e^{c_1(B\lambda_i)}$, where $\lambda_i:T\to\U(1)$ are the weights of the fundamental representation of $E_7$. The fact that the polynomials $I_2,I_6,I_8,I_{10},I_{12},I_{14},I_{18}$ freely generate $H^*(BT)^W\cong H^*(BE_7)$ follows from \cite{mehta}, as explained in \cite[\S3.3]{pontlocsym}. 

A similar discussion applies to $E_6$, where we choose $T\hookrightarrow E_6$ to be the standard torus that factors through the subgroup $\big(\Spin(10)\times\SO(2)\big)/\mathbb Z_4\hookrightarrow E_6$, where $\big(\Spin(10)\times\SO(2)\big)/\mathbb Z_4$ is the maximal compact subgroup of the real form $E_{6(-14)}$. 
In this case $H^*(BT)^W$ is the polynomial algebra freely generated by $I_2,I_5,I_6,I_8,I_9,I_{12}$, where 
\[I_k=(-4y)^k+\sum_{i=1}^5\big[(2y-x_i)^k+(2y+x_i)^k\big]+\sum_{\substack{\epsilon\in\{\pm1\}^5\\ \prod\epsilon_i=1}}\left[\frac{1}{2}(\epsilon_1x_1+\cdots+\epsilon_5x_5)-y\right]^k.\]

\subsubsection*{Hermitian representations, weights, and Chern character}\label{sec:reps}

The proof of Theorem \ref{thm:rational-surjectivity} requires finding Hermitian representations $V$ of $G$ and computing $\gamma(V)=\ch(\mathbb V^+-\mathbb V^-)$. 

In terms of finding Hermitian representations, for the classical simple groups $G$, the standard representation of $G$ is obviously Hermitian in every case of interest. We will see that this representation, its exterior powers, and its orbit under the Adams operations, are \emph{almost} enough to prove the theorem (the exception is $\SO(p,q)$ where we will also need to use spin representations). For the exceptional groups $G$, we will either use the (minimal positive-dimensional) fundamental representation or the adjoint representation, together with exterior powers and Adams operations. The adjoint representation is always Hermitian via the Killing form. To see that the fundamental representation $U$ is Hermitian, recall that it is equivalent to show that the dual and conjugate representations are isomorphic $U^*\cong \overline{U}$. This isomorphism is automatic for exceptional $G$ that are not real forms of $E_6$ for the simple reason that $U$ is determined uniquely by its dimension. For real forms of $E_6$ as small additional argument is needed. 

We recall, for later use, the general relationship between weights of a representation and its Chern character. Specific computations will appear in the proof of Theorem \ref{thm:rational-surjectivity}. Fix a $G$-representation $V\cong\mathbb C^m$. When restricted to a maximal torus $T\subset K\subset G$, the representation is diagonalizable and has weights $\lambda_i:T\to\mathbb C^\times$ for $1\le i\le m$. These weights determine line bundles $B\lambda_i:BT\to B\mathbb C^\times=B\GL_1(\mathbb C)$, and the Chern character is 
\[\ch(V)=\sum_{i=1}^m e^{c_1(B\lambda_i)}\in H^{\ev}(BT)^W\cong H^{\ev}(BG),\] where $c_1(B\lambda_i)\in H^2(BT)$ is the first Chern class. Compare with  \cite[\S9]{BH}. 

\subsubsection*{ Dense image for $R(G)\otimes\mathbb Q\to H^{\ev}(BG)$}

Here we focus on ways to show that a subalgebra of $H^{\ev}(BG)$ is dense. For this we will make good use of Adams operations. 

We work in a formal power series ring $H=\mathbb Q[[t_1,\ldots,t_r]]$, where each $t_i$ has a formal degree $d_i$. The degree of a monomial $t_I^{\alpha}=t_{i_1}^{\alpha_{i_1}}\cdots t_{i_m}^{\alpha_{i_m}}$ is defined as $\alpha_{i_1}d_{i_1}+\cdots+\alpha_{i_m}d_{i_m}$. We assume that each $d_i$ is even (as is true for $H=H^{\ev}(BG)$).

Now we make some remarks about some additional structure for the rings $R(G)$, $H^{\ev}(BG)$ and the map $\gamma: R(G)\otimes\mathbb Q\to H^{\ev}(BG)$. The ring $R(G)$ is a $\lambda$-ring (direct sum, tensor product, exterior powers). As a $\lambda$-ring, $R(G)$ has Adams operations $\psi^k:R(G)\to R(G)$, which are ring homomorphisms and are given by $\psi^k(L)=L^{\otimes k}$ when $L$ is 1-dimensional. Power series rings $\mathbb Z[[x_1,\ldots,x_n]]$ are also $\lambda$-rings, but this does not apply to $H^{\ev}(BG)$ (since we take cohomology with $\mathbb Q$ coefficients), and the map $\gamma$ is not a map of $\lambda$-rings. Even so, $H^{\ev}(X)$ does have Adams operations, which are simple to describe: for $p\in H^{\ev}(BG)$, $\psi^k(p)$ acts on the degree-$2d$ term of $p$ by multiplication by $k^d$. The map $\gamma\otimes\mathbb Q$ commutes with Adams operations in the following sense. 

\begin{lem}\label{lem:adams-commute}
Let $V$ be a Hermitian representation of $G$. Recall that $V=V^+\oplus V^-$ denotes the decomposition in positive/negative definite subspaces with respect to $K<G$, and $\mathbb V^{\pm}$ denote the induced bundles over $BK$. Then 
\[\gamma(\psi^kV)=\psi^k\big(\ch \mathbb V^+\big)+(-1)^k\psi^k\big(\ch \mathbb V^-\big).\]
\end{lem}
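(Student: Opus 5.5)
The plan is to reduce the identity to two facts. First, the signature map $\mathrm{sig}$ intertwines Adams operations up to the sign $(-1)^k$ on the negative part. Second, the Chern character commutes with Adams operations on honest vector bundles over $BK$.

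\textbf{Step 1 (setup).} For $\psi^kV$ to make sense as an element of $R(G)$ carrying Hermitian data, I will regard $R(G)$ as generated by Hermitian representations. Direct sum, tensor product and exterior powers all inherit invariant Hermitian forms. On such representations, $\mathrm{sig}$ is additive. It is also multiplicative: $(V\otimes W)^+=V^+\otimes W^+\oplus V^-\otimes W^-$ and $(V\otimes W)^-=V^+\otimes W^-\oplus V^-\otimes W^+$, as $K$-representations. The key computation is
\[
\Lambda^j V\cong\bigoplus_{a+b=j}\Lambda^aV^+\otimes\Lambda^bV^-,
\]
where the induced form on the summand $\Lambda^aV^+\otimes\Lambda^bV^-$ has sign $(-1)^b$. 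In generating-function form this reads
\[
\mathrm{sig}\,\lambda_t(V)=\lambda_t(\mathbb V^+)\,\lambda_{-t}(\mathbb V^-).
\]

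\textbf{Step 2 (Adams operations).} The Adams operations are obtained from $\lambda_t$ by the Newton formula, i.e.\ as a logarithmic derivative, $\sum_k(-1)^{k}\psi^k t^{k}=-t\frac{d}{dt}\log\lambda_t$ (up to the standard normalization). Applying $\mathrm{sig}$ to this formula and using Step~1, the $\mathbb V^+$ factor contributes $\psi^k\mathbb V^+$. The $\mathbb V^-$ factor has $t$ replaced by $-t$, so it contributes $(-1)^k\psi^k\mathbb V^-$. This gives
\[
\mathrm{sig}(\psi^kV)=\psi^k\mathbb V^+ +(-1)^k\psi^k\mathbb V^-\quad\text{in }K(BK).
\]
As a sanity check I will verify this on a line: if $V^-=L$ is one-dimensional, then $\psi^kL=L^{\otimes k}$ carries the form $(-1)^k h$, which matches the formula. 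A cleaner route to the same conclusion is the splitting principle. Restrict to the torus $T\subset K$. There $V^\pm$ split into weight lines with definite forms of sign $\pm$, and $\psi^k$ sends each weight line to its $k$-th tensor power, whose form has sign $(\pm1)^k$. Since $H^{\ev}(BK)\to H^{\ev}(BT)$ is injective, this torus computation suffices.

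\textbf{Step 3 (Chern character).} I will apply $\ch$ and use $\ch(\psi^kE)=\psi^k\ch(E)$ for vector bundles $E$ over $BK$, where on the right $\psi^k$ multiplies degree-$2d$ classes by $k^d$. This follows from $\ch(\psi^kE)=\sum_i e^{k c_1(\lambda_i)}$ over the weights $\lambda_i$ (see \cite[\S9]{BH}). The lemma then follows from Step~2.

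\textbf{Main obstacle.} I expect the main difficulty to be the bookkeeping in Step~1. One must make precise what $\psi^kV$ means, since it is a virtual representation with a Hermitian structure on each summand. One must then check that $\mathrm{sig}$ is well defined on such virtual combinations, which requires additivity on orthogonal sums and the sign rule for tensor products. I would handle this by working $T$-equivariantly with weight lines from the start, which makes all the signs transparent.
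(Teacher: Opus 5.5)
Your proof is correct, but its main line differs from the paper's. The paper works entirely on the torus. By naturality and injectivity of $H^{\ev}(BG)\to H^{\ev}(BT)$ it reduces to $i^*V$, and splits this as an orthogonal sum of definite weight lines $L_i$ (positive) and $J_j$ (negative). It then uses only additivity of $\psi^k$, $\psi^k(L)=L^{\otimes k}$, and the fact that $J_j^{\otimes k}$ carries a form of sign $(-1)^k$. Both sides are then visibly $\sum_i e^{kc_1(\mathbb L_i)}+(-1)^k\sum_j e^{kc_1(\mathbb J_j)}$. This is exactly the ``cleaner route'' you mention at the end of Step 2.

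Your primary route instead proves the identity already in $K(BK)$, before applying $\ch$. The generating-function identity $\mathrm{sig}\,\lambda_t(V)=\lambda_t(\mathbb V^+)\lambda_{-t}(\mathbb V^-)$ is right: the summands $\Lambda^aV^+\otimes\Lambda^bV^-$ are mutually orthogonal for the determinant form, and the form on each has sign $(-1)^b$. Since $t\frac{d}{dt}$ commutes with $t\mapsto -t$, the log-derivative gives $(-1)^k\mathrm{sig}(\psi^kV)=(-1)^k\psi^k\mathbb V^+ +\psi^k\mathbb V^-$, which is the claim. The compatibility $\ch\circ\psi^k=\psi^k\circ\ch$ then finishes.

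What your version buys:
\begin{enumerate}[(a)]
\item It gives a slightly stronger, $K$-theoretic statement.
\item It makes explicit why $\psi^kV$, a virtual integer combination of exterior powers with induced forms, has the signature claimed. The paper uses this implicitly when it writes $\psi^k(i^*V)$ as a sum of tensor powers of lines.
\item Your identity for $\mathrm{sig}\,\lambda_t$ is also what underlies the paper's later formula for $\gamma(\Lambda^kU)$ in the $\Sp_{2n}(\mathbb R)$ case.
\end{enumerate}
The paper's version is shorter. It needs neither the Newton formula nor the separate compatibility of $\ch$ with $\psi^k$, which is itself proved by the same reduction to lines.
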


\begin{proof}
To aid in this computation we use the following diagram, where the horizontal maps are those used to define $\gamma$, and the vertical maps are given by restriction to a fixed maximal torus $T<K<G$. 
\[\begin{xy}
(-25,0)*+{R(G)}="A";
(0,0)*+{K(BG)}="B";
(25,0)*+{H^{\ev}(BG)}="C";
(-25,-15)*+{R(T)}="D";
(0,-15)*+{K(BT)}="E";
(25,-15)*+{H^{\ev}(BT)}="F";
{\ar"A";"B"}?*!/_3mm/{};
{\ar "B";"C"}?*!/_3mm/{};
{\ar "D";"E"}?*!/_3mm/{};
{\ar "E";"F"}?*!/_3mm/{};
{\ar "A";"D"}?*!/^3mm/{i^*};
{\ar "B";"E"}?*!/^3mm/{i^*};
{\ar@{^{(}->} "C";"F"}?*!/_3mm/{};
\end{xy}\]
The right vertical map is injective, so it suffices to check the equality in $H^{\ev}(BT)$. Thus, it suffices to show (after using the naturality of $\gamma$, $\psi^k$, and $\ch$) that 
\begin{equation}\label{eqn:adams-commute}\gamma\big (\psi^k (i^*V)\big)=\psi^k\big(\ch (i^*\mathbb V^+)\big)+(-1)^k\psi^k\big(\ch (i^*\mathbb V^-)\big).\end{equation}

We separately compute the left- and right-hand sides of (\ref{eqn:adams-commute}) and see that they agree. 

For the left-hand side of (\ref{eqn:adams-commute}), the $T$-representation $i^*V$ splits as a sum of 1-dimensional representations 
\[i^*V=L_1\oplus \cdots\oplus L_p\oplus J_1\oplus\cdots\oplus J_q,\]
where, in our notation, the Hermitian form is positive on the $L_i$ and negative on the $J_i$. By properties of the Adams operations (group homomorphism, action on 1-dimensional representations), we have  
\[\psi^k(i^*V)=L_1^{\otimes k}\oplus \cdots\oplus L_p^{\otimes k}\oplus J_1^{\otimes k}\oplus\cdots\oplus J_q^{\otimes k}.\]

The induced Hermitian form on $L_i^{\otimes k}$ is positive definite and the Hermitian form on $J_i^{\otimes k}$ is positive/negative-definite when $k$ is even/odd (recall that the Hermitian form induced on a tensor product is given by multiplication, i.e.\  $\beta_{X\otimes Y}(x\otimes y, x'\otimes y')=\beta_X(x,x')\cdot \beta_Y(y,y')$). Using this, if we write $\mathbb L_i,\mathbb J_j$ for the line bundles over $BT$ associated to $L_i,J_j$, then we have  
\[\gamma\big(i^*\psi^k(V)\big)=\sum_i e^{kc_1(\mathbb L_i)}+(-1)^k\sum_je^{kc_1(\mathbb J_j)}.\]

For the right-hand side of (\ref{eqn:adams-commute}), the bundles $\mathbb V^\pm$ over $BK$ pull back to $BT$ and decompose $i^*\mathbb V^+=\mathbb L_1\oplus\cdots\oplus \mathbb L_p$ and $i^*\mathbb V^-=\mathbb J_1\oplus\cdots\oplus \mathbb J_q$. Accordingly, $\ch(\mathbb V^+)=\sum e^{c_1(\mathbb L_i)}$ and by the formula for the Adams operations on cohomology, we have $\psi^k(\ch\mathbb V^+)=\sum_i e^{kc_1(\mathbb L_i)}$ and $\psi^k(\ch\mathbb V^-)=\sum_j e^{kc_1(\mathbb J_j)}$. Thus, the two sides of Equation (\ref{eqn:adams-commute}) agree. 
\end{proof}

\begin{lem}[densely generating]\label{lem:densely-generate}
Let $H=\mathbb Q[[t_1,\ldots,t_r]]$ be a formal power series ring with variables $t_1,\ldots, t_r$. Write $d_i\ge1$ for the formal degree of $t_i$. If $a_i\in H$ has lowest-degree nonzero term $t_i$ for $i=1,\ldots,r$, then $1,a_1,\ldots,a_r$ generate a dense subalgebra of $H$. 
\end{lem}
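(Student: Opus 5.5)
We show that the closure $\overline{A}$ of the subalgebra $A$ generated by $1,a_1,\ldots,a_r$ (in the product, i.e.\ $\mathfrak m$-adic, topology on $H$) is all of $H$. Write $F_n\subset H$ for the closed ideal of power series all of whose monomials have degree $\ge n$. Since a basic neighborhood of $p\in H$ is $p+F_n$, density means: for every $p\in H$ and every $n$ there is $q\in A$ with $p-q\in F_n$. Because each $d_i\ge1$, there are only finitely many monomials of degree $<n$, so $H/F_n$ is a finite-dimensional $\QQ$-vector space. It therefore suffices to show that the image of $A$ in $H/F_n$ is everything, for every $n$.

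The key step is a triangularity argument by induction on degree. First, for a monomial $t^\alpha=t_1^{\alpha_1}\cdots t_r^{\alpha_r}$ of degree $D=\sum\alpha_i d_i$, consider $a^\alpha:=a_1^{\alpha_1}\cdots a_r^{\alpha_r}\in A$. Each $a_i=t_i+(\text{terms of degree}>d_i)$, since $t_i$ is the lowest-degree nonzero term. Hence $a^\alpha=t^\alpha+(\text{terms of degree}>D)$, because a product of lowest terms is the lowest term of the product (the leading forms multiply to $t^\alpha\ne0$, and all cross terms have strictly larger degree). The constant $1$ handles $\alpha=0$.

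Now I prove by induction on $D$ that every monomial of degree $D$ lies in $A+F_{n}$ for any fixed $n$. Take the statement for all degrees $>D$ first (downward induction starting from $D\ge n$, where it is trivial since the monomial lies in $F_n$). Given $t^\alpha$ of degree $D<n$, we have $t^\alpha=a^\alpha-e$ with $e\in F_{D+1}$. Modulo $F_n$, $e$ is a finite $\QQ$-linear combination of monomials of degrees in $(D,n)$, each of which lies in $A+F_n$ by the inductive hypothesis. Hence $t^\alpha\in A+F_n$. Since $A+F_n$ is a $\QQ$-subspace containing all monomials of degree $<n$, it equals $H$, giving density.

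There is no real obstacle here; the only point requiring care is the claim that the lowest-degree term of $a^\alpha$ is exactly $t^\alpha$. This uses that the degree grading is multiplicative on lowest-degree homogeneous parts and that $H$ is an integral domain, so the product of the nonzero leading forms $t_1^{\alpha_1}\cdots t_r^{\alpha_r}$ is nonzero. I would also remark in the proof that evenness of the $d_i$ is not needed, only $d_i\ge1$.
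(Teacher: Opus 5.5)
Your proof is correct and uses the same key fact as the paper's: each product $a_1^{\alpha_1}\cdots a_r^{\alpha_r}$ has lowest-degree term $t^\alpha$, so the subalgebra is triangular with respect to the monomial basis. The paper inducts upward, successively correcting the lowest-degree discrepancy $b-b_{n-1}$ of a given $b$, whereas you show by downward induction that every monomial of degree $<n$ lies in $A+F_n$; this is the same argument organized differently.
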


\begin{proof}
A subset $H_0\subset H$ of a power series ring is dense if for any $b\in H$ and $n\ge0$, there exists $b_n\in H_0$ so that $b-b_n$ vanishes up to degree $n$. 

Let $H_0$ be the subalgebra generated by $1,a_1,\ldots,a_r$. Fixing $b\in H$, we define the desired $b_n$ inductively. For $n=0$, we take $b_0$ to be the constant term of $b$. For the induction step, the power series $b-b_{n-1}$ vanishes up to degree $n-1$, so the lowest-degree nonzero term of $b-b_{n-1}$ is a linear combination of monomials of the $t_I^{\alpha}=t_{i_1}^{\alpha_{i_1}}\cdots t_{i_m}^{\alpha_{i_m}}$ with total degree $\alpha_{i_1}d_{i_1}+\cdots+\alpha_{i_m}d_{i_m}=n$. Each such monomial is the lowest-degree nonzero term of $a_I^{\alpha}=a_{i_1}^{\alpha_{i_1}}\cdots a_{i_m}^{\alpha_{i_m}}$, and we define $b_n'$ to be the corresponding linear combination of the $a_I^{\alpha}$. By construction, for $b_n=b_{n-1}+b_n'$, the power series $b-b_n$ vanishes up to degree $n$. 
\end{proof}

\begin{lem}[spanning with Adams operations]\label{lem:adams-ops}
Let $H=\mathbb Q[[t_1,\ldots,t_r]]$ be a formal power series ring with $\deg(t_i)$ even for each $i$. Fix $c\in H$ and write $c=c_0+c_1+\cdots$, where $c_d$ is the degree $2d$ term of $c$. Let $H_0$ be the linear subspace of $H$ generated by the orbit of $c$ under Adams operations $\{c=\psi^1(c),\psi^2(c),\psi^3(c),\ldots\}$. Then for each $d$, the subspace $H_0$ contains an element whose lowest-degree nonzero term is $c_d$. 
\end{lem}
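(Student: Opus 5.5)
We will work with finite linear combinations of the elements $\psi^k(c)$ and use a Vandermonde-type elimination. By definition of the Adams operations on $H$, we have
\[\psi^k(c)=\sum_{j\ge0}k^j c_j .\]
Fix $d$. We want rational coefficients $\mu_1,\ldots,\mu_{d+1}$ such that
\[\sum_{k=1}^{d+1}\mu_k k^j=\begin{cases}0 & 0\le j<d,\\ 1 & j=d.\end{cases}\]
Such coefficients exist because the square matrix $(k^j)_{1\le k\le d+1,\,0\le j\le d}$ is a Vandermonde matrix on the distinct nodes $1,\ldots,d+1$. Hence it is invertible over $\QQ$, and we may solve for the coefficient vector directly.

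With this choice, set
\[e=\sum_{k=1}^{d+1}\mu_k\psi^k(c)\in H_0 .\]
Its degree-$2j$ part is $\bigl(\sum_k\mu_k k^j\bigr)c_j$. This vanishes for $j<d$, equals $c_d$ for $j=d$, and is some rational multiple of $c_j$ for $j>d$. So $e=c_d+(\text{terms of degree}>2d)$.

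If $c_d\neq0$, the lowest-degree nonzero term of $e$ is therefore exactly $c_d$, as required. If $c_d=0$, the statement is vacuous or can be read as trivially satisfied: for example, $0\in H_0$ works under the convention that $0$ has lowest term $0$. We will note this convention in the write-up.

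The only point needing care is the invertibility of the Vandermonde system, and this is standard. An explicit choice also exists: the $d$-th finite difference
\[\sum_{i=0}^{d}(-1)^{d-i}\binom{d}{i}\psi^{i+1}\]
applied to $c$ kills the $k^j$ contributions for $j<d$ and produces $d!\,c_d$ in degree $2d$. Dividing by $d!$ then gives the desired element. We could present this version to avoid citing Vandermonde.
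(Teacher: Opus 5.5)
Your proposal is correct and takes essentially the same approach as the paper: invert the Vandermonde matrix $(k^j)_{1\le k\le d+1,\,0\le j\le d}$ to find a rational combination of $\psi^1(c),\ldots,\psi^{d+1}(c)$ whose degree-$2j$ parts vanish for $j<d$ and equal $c_d$ for $j=d$. Your explicit $d$-th finite-difference formula and your remark about the degenerate case $c_d=0$ are correct refinements that the paper does not spell out.
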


\begin{proof}
Let $\hat H_0$ be the subspace of $H$ spanned by $c_0, c_1, c_2,\ldots$.

By definition, $\psi^k(c)=c_0+kc_1+k^2c_2+k^3c_3\cdots$. Fix $d\ge1$ and consider the $(d+1)\times(d+1)$ matrix, whose $k$-th row gives the coefficients of $\psi^k(c)$ expressed as a linear combination of the $c_i$, up to degree $2d$. 
\[\left[
\begin{array}{ccccccc}
1&1&1^2& 1^3&\cdots & 1^d\\
1&2&2^2& 2^3&\cdots & 2^d\\
\vdots&\vdots&\vdots&\ddots & \vdots\\
1&d+1&(d+1)^2& (d+1)^3&\cdots & (d+1)^d\\
\end{array}
\right].\]
This matrix is invertible (Vandermonde determinant), so the standard basis vectors are in the row-span. Hence the span of $\{\psi^k(c)\}_{0\le k\le d}$ contains a power series whose lowest-degree nonzero term is $c_i$ for $i=0,\ldots,d$. Since $d$ was arbitrary, this completes the proof. 
\end{proof}


\begin{cor}\label{cor:dense}
Let $H=\mathbb Q[[t_1,\ldots,t_r]]$ be a formal power series ring with $\deg(t_i)$ even for each $i$. Assume that $\deg(t_i)\neq \deg(t_j)$ for $i\neq j$. Fix $c\in H$ and assume that the $\deg(t_i)$-term of $c$ contains a nonzero multiple of $t_i$ as a summand. Then the algebra spanned by $1$ and $\{\psi^k(c)\}_{k\ge0}$ is dense in $H$. 
\end{cor}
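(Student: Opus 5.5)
The plan is to reduce to Lemma \ref{lem:densely-generate}. That lemma needs, for each variable $t_i$, an element $a_i$ of the algebra whose lowest-degree nonzero term is exactly $t_i$. Lemma \ref{lem:adams-ops} supplies the raw material: let $H_0$ be the linear span of the Adams orbit $\{\psi^k(c)\}_{k\ge 1}$. For each $d$, this span contains an element whose lowest-degree nonzero term is $c_d$, the degree-$2d$ homogeneous component of $c$. So the task is to manufacture, for each $i$, an element with leading term $t_i$ from these building blocks together with products of them.

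Order the variables so that $\deg t_1<\deg t_2<\cdots<\deg t_r$; this is possible because the degrees are distinct. We proceed by induction on $i$. Let $2d_i=\deg t_i$. By hypothesis $c_{d_i}=\lambda_i t_i+q_i$, where $\lambda_i\neq 0$ and $q_i$ is a homogeneous polynomial of degree $2d_i$ in which $t_i$ does not appear linearly. Since the degrees are distinct, every other monomial of degree $2d_i$ is a product of at least two variables. Such a monomial cannot involve $t_i$ or any higher-degree variable, since that would push the degree above $2d_i$. Hence $q_i$ is a polynomial in $t_1,\dots,t_{i-1}$ alone, with no linear terms in them either.

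By induction we already have elements $a_1,\dots,a_{i-1}$ in the algebra $A$ generated by $1$ and the $\psi^k(c)$, with leading terms $t_1,\dots,t_{i-1}$. Substitute $a_j$ for $t_j$ in $q_i$ to get $q_i(a_1,\dots,a_{i-1})\in A$. Each monomial of $q_i$ has weighted degree exactly $2d_i$, so this element equals $q_i$ plus terms of degree $>2d_i$. Now take $e_i\in H_0\subset A$ with leading term $c_{d_i}$, as provided by Lemma \ref{lem:adams-ops}, and set
\[a_i=\lambda_i^{-1}\bigl(e_i-q_i(a_1,\dots,a_{i-1})\bigr).\]
This element lies in $A$, and its lowest-degree nonzero term is $t_i$. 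The base case $i=1$ is the same argument with $q_1=0$, since no monomial other than $t_1$ has degree $\deg t_1$. Lemma \ref{lem:densely-generate} then shows that $1,a_1,\dots,a_r$ generate a dense subalgebra. That subalgebra is contained in $A$, so $A$ is dense.

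The only delicate point is the bookkeeping in the induction step. One must check two things. First, $e_i$ can have lowest term $c_{d_i}$ only if $c_{d_i}\neq 0$, which holds because $\lambda_i\neq 0$. Second, the correction $q_i(a_1,\dots,a_{i-1})$ must not create any new terms of degree $\le 2d_i$. This second point uses the fact that $q_i$ is weighted-homogeneous of degree $2d_i$, so every monomial of $q_i(a_1,\dots,a_{i-1})$ has leading degree exactly $2d_i$. One should also note that the $c_0$ and $\psi^0$ contributions are harmless: constants lie in $A$ anyway, since $1\in A$.
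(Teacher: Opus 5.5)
Your proof is correct and follows the paper's argument essentially step for step. Order the $t_i$ by degree and induct, using Lemma~\ref{lem:adams-ops} to get an element with leading term $c_{d_i}=\lambda_i t_i+q_i(t_1,\ldots,t_{i-1})$; then subtract $q_i(a_1,\ldots,a_{i-1})$, rescale by $\lambda_i^{-1}$, and finish with Lemma~\ref{lem:densely-generate}. Your bookkeeping is slightly cleaner than the paper's: the paper writes $c_i$ where it means the element $a_i'$ from Lemma~\ref{lem:adams-ops}, and it omits the normalizing constant in the base case.
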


\begin{proof}
Assume without loss of generality that the $t_i$ are ordered by increasing degree. 
Write $H_0$ for the algebra spanned by 1 and $\{\psi^k(c)\}_{k\ge0}$. By Lemma \ref{lem:densely-generate}, it suffices to show that $H_0$ contains a power series $a_i$ whose lowest-degree nonzero term is $t_i$ for $i=1,\ldots,r$. We argue by induction on $i$. 

Write $c=\sum_{j\ge0} c_j$, where $c_j$ is the degree $2j$ term of $c$. For the base case, since $t_1$ is the monomial with smallest positive degree, we can take $a_1=c-c_0=\psi^1(c)-\psi^0(c)$. For the induction step, assume that we have found $a_1,\ldots,a_{i-1}$ such that the lowest-degree nonzero term of $a_j$ is $t_j$. By Lemma \ref{lem:adams-ops}, there is $a_i'$ in the span of the $\psi^k(c)$ whose lowest-degree nonzero term is $c_i$. By assumption we can write 
\[c_i=rt_i+\delta(t_1,\ldots,t_{i-1}),\]
where $r$ is a nonzero constant and $\delta\in\mathbb Q[t_1,\ldots,t_{i-1}]$ is a polynomial, each of whose monomials has (weighted) degree equal to $\deg(t_i)$. Observe that $\delta(a_1,\ldots,a_{i-1})$ is a power series whose lowest-degree nonzero term is equal to $\delta(t_1,\ldots,t_{i-1})$. Therefore $a_i:=\frac{1}{r}\big[c_i-\delta(a_1,\ldots,a_{i-1})\big]$ has lowest-degree nonzero term equal to $t_i$, as desired. 
\end{proof}

\begin{rmk}\label{rmk:constant}
For the purpose of applying the results of this subsection, we note that the image of $\gamma$ always contains $1\in H^{\ev}(BG)$, which is the image of the trivial representation $\mathbb C$. 
\end{rmk}

\subsection{\boldmath Dense image for $R(G)\otimes\mathbb Q\to H^{\ev}(BG)$}\label{sec:image}

In this section we prove Theorem \ref{thm:rational-surjectivity}. We begin with some notation. Recall that $K<G$ is a maximal compact subgroup. For representations, we use the shorthand $\sigma_k^\ell=\Sym^\ell(\mathbb C^k)$ and $\lambda_n^k=\Lambda^k(\mathbb C^n)$ and $1=\mathbb C$ (trivial representation); $V^*$ and $\overline V$ denote the dual and complex conjugate representations, respectively. When $n$ is fixed, we write $c_i\in \mathbb Q[x_1,\ldots,x_n]^{S_n}$ and $p_i\in\mathbb Q[x_1^2,\ldots,x_n^2]^{S_n}$ for  the degree-$i$ elementary symmetric polynomials in the given variables (these are Chern and Pontryagin classes). When $K=K_1\times K_2$ is a product (even up to isogeny) we will always use a ``diagonal" torus $T=T_1\times T_2\hookrightarrow K_1\times K_2$. For our computations we will (sometimes implicitly) use coordinates $H^*(BT_1)=\mathbb Q[x_1,\ldots,x_p]$ and $H^*(BT_2)=\mathbb Q[y_1,\ldots,y_q]$, unless otherwise specified. When $p$ or $q$ is 1, we drop the subscript. 

\begin{rmk}\label{rmk:isogeny}
When $K$ is the quotient of $K_1\times K_2$ by a finite group (as will occur below), we don't literally get a diagonal torus. But in practice this does not matter. 
An isogeny $T'\to T$ between tori induces an injection $H^1(T;\mathbb Z)\to H^1(T';\mathbb Z)$, and a basis for $H^1(T;\mathbb Z)$ gives one for $H^1(T';\mathbb Z)$ after rescaling, and this does not ultimately change our computation (of the closure of the image of $\gamma:R(G)\otimes\mathbb Q\to H^{\ev}(BG)$). 
\end{rmk}

\subsubsection*{\boldmath$\Sp_{2n}(\mathbb R)$ and $\SO^*(2n)$} 

We begin with 
\[\Sp_{2n}(\mathbb R)=\{A\in\SL_{2n}(\mathbb R): A^tJA=J\}\>\>\>\text{ where }J=\left(\begin{smallmatrix}
0 & I_n\\
-I_n & 0
\end{smallmatrix}\right).\]
This case was originally proved by Lusztig \cite{Lusztig}. We present his argument, in part for completeness, and also in order to contrast it with the argument in other cases. Notably Lusztig's argument does not use Adams operations. For many of the other cases, e.g.\ $\SU(p,q)$, trying to argue density without Adams operations seems to lead to a proof that is overly complicated. Similarly, using Adams operations for $\Sp_{2n}(\mathbb R)$ does not seem to be judicious.

\begin{proof}[Proof of Theorem \ref{thm:rational-surjectivity} for $\Sp_{2n}(\mathbb R)$]

Here $K=\U(n)$, so $H^{\ev}(BG)\cong\mathbb Q[[c_1,\ldots,c_n]]$. The standard representation $U=\mathbb C^{2n}$ is Hermitian. Indeed the action of $\Sp_{2n}(\mathbb R)$ on $\mathbb C^{2n}$ preserves the Hermitian form $iJ$ because $A\in\Sp_{2n}(\mathbb R)$ satisfies $A^tJA=J$ and $A^*=A^t$. As a representation of the maximal compact $K=\U(n)$, the representation $U$ decomposes $U=\lambda_n^1+(\lambda_n^1)^*$. Then 
\[\gamma(U)=\sum_{i=1}^n \left(e^{x_i}-e^{-x_i}\right).\]
Consequently, $\gamma(\Lambda^kU)$ is the degree-$k$ elementary symmetric polynomial in 
\[\{e^{x_1},-e^{-x_1},\ldots,e^{x_n},-e^{-x_n}\}.\]
Let $s_k$ be the degree-$k$ elementary symmetric polynomial in  $\{e^{x_1}-e^{-x_1},\ldots,e^{x_n}-e^{-x_n}\}$. 

The proof reduces to two observations (that are each easy to check): 
\begin{enumerate}
\item $s_k$ is a power series whose lowest-degree term is $2c_k$. 
\item $\gamma(\Lambda^kU)=s_k+\delta(s_0,\ldots,s_{k-1})$, where $\delta\in\mathbb Q[s_0,\ldots,s_{k-1}]$ is a polynomial. 
\end{enumerate} 
To conclude, (i) implies that subalgebra of $H^{\ev}(BG)$ generated by $s_0, s_1,\ldots, s_n$ is dense in $H^{\ev}(BG)$ (Lemma \ref{lem:densely-generate}), and (ii) implies that $s_k$ is in the subalgebra spanned by the $\gamma(\Lambda^iU)$ for $k=0,\ldots,n$ (proved inductively). 
\end{proof}

The proof of Theorem \ref{thm:rational-surjectivity} for 
\[\SO^*(2n)=\{
A\in \GL_{2n}(\mathbb C): A^*JA=J\text{ and }A^tA=I\},\>\>\>\text{ where }J= \left(\begin{smallmatrix}
0 & I_n\\
-I_n & 0
\end{smallmatrix}\right)\] follows from the argument for $\Sp_{2n}(\mathbb R)$. By definition $\SO^*(2n)$ is a subgroup of $\U(n,n)$, so the standard representation $U=\mathbb C^{2n}$ is Hermitian. Its restriction to $K=\U(n)$ decomposes $U=\lambda_n^1+(\lambda_n^1)^*$. This is the same computation as for $\Sp_{2n}(\mathbb R)$, and so the same argument applies here to show that $\gamma\otimes\mathbb Q$ has dense image for $G=\SO^*(2n)$. 

\subsubsection*{\boldmath$\SU(n_1,n_2)$ and $\Sp(n_1,n_2)$} 

The proofs are similar for 
\[\SU(n_1,n_2)=\{
A\in \SL_{n_1+n_2}(\mathbb C): A^*Q_{n_1,n_2}A=Q_{n_1,n_2}\},\]
where $Q_{n_1,n_2}= \left(\begin{smallmatrix}
I_{n_1} & 0\\
0 & -I_{n_2}
\end{smallmatrix}\right)$, and
\[\Sp(n_1,n_2)=\{A\in\GL_{2n}(\mathbb C): A^tJA=A\text{ and }A^*K_{p,q}A=K_{p,q}\},\] 
where $J= \left(\begin{smallmatrix}
0 & I_n\\
-I_n & 0
\end{smallmatrix}\right)$ and 
$K_{p,q}= \left(\begin{smallmatrix}
I_{n_1} & \\
 & -I_{n_2}\\
 &&I_{n_1}\\
 &&&-I_{n_2}
\end{smallmatrix}\right)$.

\begin{proof}[Proof of Theorem \ref{thm:rational-surjectivity} for $\SU(n_1,n_2)$]
  Here $K=S(\U(n_1)\times\U(n_2))$. Instead of using the standard coordinates for $H^{\ev}(BG)$, it will be convenient to use
  \[H^{\ev}(BG)\cong\mathbb Q[[p_1,\ldots,p_{n_1},q_2,\ldots,q_{n_2}]],\]
  where for $m\ge0$, we define 
\[p_m=\frac{1}{m!}(x_1^m+\cdots+x_{n_1}^m)\>\>\>\text{ and }\>\>\>q_m=\frac{1}{m!}(y_1^m+\cdots+y_{n_2}^m).\]

The standard representation of $\SU(n_1,n_2)$ on $\mathbb C^{n_1,n_2}$ is clearly Hermitian, and  
\[\gamma(U)=\sum_{i=1}^{n_1} e^{x_i}-\sum_{j=1}^{n_2}e^{y_j}=\sum_{m\ge0}(p_m-q_m).\] More generally, by Lemma \ref{lem:adams-commute}, we have \[\gamma(\psi^kU)=\sum_{i=1}^{n_1} e^{kx_i}+(-1)^k\sum_{j=1}^{n_2}e^{ky_j}=\sum_{m\ge0} k^m\big(p_m+(-1)^kq_m\big).\]
In particular, 
\[\gamma(\psi^2U)=\sum_{m\ge0} 2^m\big(p_m+q_m\big).\]

By Lemma \ref{lem:adams-ops} (applied twice), the image of $\gamma\otimes\mathbb Q$ contains elements whose lowest-degree nonzero term is $p_m+q_m$ and $p_m-q_m$ for each $m\ge0$. But then the same is true for $p_m$ and $q_m$ for $m\ge0$, and this implies that the image of $\gamma\otimes\mathbb Q$ is dense by Lemma \ref{lem:densely-generate}. 
\end{proof}

\begin{proof}[Proof of Theorem \ref{thm:rational-surjectivity} for $\Sp(n_1,n_2)$] 
Here $K=\Sp(n_1)\times\Sp(n_2)$, and we identify $H^{\ev}(BG)$ with $\mathbb Q[[p_1,\ldots,p_{n_1},q_1,\ldots,q_{n_2}]]$, where for $m\ge0$, we define 
\[p_m=\frac{2}{(2m)!}(x_1^{2m}+\cdots+x_{n_1}^{2m})\>\>\>\text{ and }\>\>\>q_m=\frac{2}{(2m)!}(y_1^{2m}+\cdots+y_{n_2}^{2m}).\] 
By definition, $\Sp(n_1,n_2)$ is a subgroup of $\U(2n_1,2n_2)$, so the standard representation $U=\mathbb C^{2(n_1+n_2)}$ is Hermitian, and 
\[
\gamma(U)=\sum_{i=1}^{n_1} \left(e^{x_i}+e^{-x_i}\right)-\sum_{j=1}^{n_2}\left(e^{y_j}+e^{-y_j}\right)=\sum_{m\ge0}p_m-q_m.
\]
By Lemma \ref{lem:adams-commute}, $\gamma(\psi^2U)=\sum_{m\ge0} 2^m\big(p_m+q_m\big)$. The formulas $\gamma(U)=\sum p_m-q_m$ and $\gamma(\psi^2U)=\sum 2^m(p_m+q_m)$ are the same as for the $\SU(n_1,n_2)$ case, and the rest of the proof that $\gamma\otimes\mathbb Q$ has dense image is the same. \end{proof}

\subsubsection*{\boldmath$\SO(n_1,n_2)$} 
For 
\[\SO(n_1,n_2)=\{
A\in \SL_{n_1+n_2}(\mathbb R): A^tQ_{n_1,n_2}A=Q_{n_1,n_2}\},\] 
where $Q_{n_1,n_2}= \left(\begin{smallmatrix}
I_{n_1} & 0\\
0 & -I_{n_2}
\end{smallmatrix}\right)$, 
the proof is similar to that for $\Sp(2m_1,2m_2)$, where $m_i=\lfloor n_i/2\rfloor$. Indeed the rings $H^{\ev}(BG)$ in these two cases differ by Euler classes, and we account for the Euler classes for $\SO(n_1,n_2)$ by using spin representations. We give a short explanation of the proof, which is followed by a discussion of spin representations, their Hermitian forms, and their Chern characters. The key results are Lemmas \ref{lem:spin-chern} and \ref{lem:even-spin-chern}. 

\begin{proof}[Proof of Theorem \ref{thm:rational-surjectivity} for $\SO(n_1,n_2)$]

By assumption $n_1$ and $n_2$ are not both odd (otherwise $G$ does not have discrete series). For concreteness, we assume that $n_1$ is even. 

Here $K\dot{=}\SO(n_1)\times \SO(n_2)$ and 
\[H^{\ev}(BG)\cong 
\begin{cases}
\mathbb Q[[p_1,\ldots,p_{m_1-1},e,q_1,\ldots,q_{m_2-1},f]]& n_2 \text{ even}\\
\mathbb Q[[p_1,\ldots,p_{m_1-1},e,q_1,\ldots,q_{m_2-1}]]& n_2 \text{ odd}
\end{cases}\] 
where $e=x_1\cdots x_{m_1}$, $f=y_1\cdots y_{m_2}$, and $p_i,q_i\in H^{4i}$ are Pontryagin classes in the $x,y$ variables, respectively. The standard representation $U=\mathbb R^{n_1,n_2}\otimes\mathbb C$ is Hermitian, and 
\[
\gamma(U)=\sum_{i=1}^{m_1} \left(e^{x_i}+e^{-x_i}\right)-\sum_{j=1}^{m_2}\left(e^{y_j}+e^{-y_j}\right).
\]
Given the similarity of this computation to that of $\Sp(2m_1,2m_2)$, we may use the argument there to reduce the proof to showing that each of the Euler classes $e,f$ appear as the lowest-degree nonzero term of some element of the image of $\gamma\otimes\mathbb Q$. For this we use the spin representations. See Lemmas \ref{lem:spin-chern} and \ref{lem:even-spin-chern} below. \end{proof}

The proof above relies on spin representations for (a cover of) $\SO(n_1,n_2)$. We denote these as 
\[\Spin(n_1,n_2) \to \Spin(n_1+n_2,\mathbb C) \to GL(S),\]
where $S$ is a complex representation of dimension $2^{m}$, where $m=m_1+m_2$ and $m_i=\lfloor \frac{n_i}{2}\rfloor$. (Here we've used that $n_1,n_2$ are not both odd -- in that case $\dim S=2^{m+1}$.) 

Our reference for the following discussion is Deligne \cite{deligne}. For concreteness we assume $n_1$ is even and consider cases when $n_2$ is odd or even. If $n_2$ is odd, then $S$ is an irreducible representation of $\Spin(n_1+n_2,\mathbb C)$, and if $n_2$ is even, then $S$ decomposes $S=S^e\oplus S^o$ (the so-called half-spin representations). The weights of $S$ are 
\begin{equation}\label{eqn:spin-weights}\frac{1}{2}(\pm L_1\pm\cdots\pm L_{m_1}\pm J_1\pm\cdots\pm J_{m_2}),\end{equation}
and the weights of the representation $S^e$ (resp.\ $S^o$) are those in (\ref{eqn:spin-weights}), where there are an even (resp.\ odd) number of ``$-$" signs (the superscripts $e$/$o$ are for even/odd).

\begin{lem}
The spin representation $S$ of $\Spin(2m_1,2m_2+1)$ is Hermitian, as are the half-spin representations $S^e,S^o$ of $\Spin(2m_1,2m_2)$. 
\end{lem}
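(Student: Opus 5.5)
We need to show the spin representation of $\Spin(2m_1,2m_2+1)$, and the half-spin representations of $\Spin(2m_1,2m_2)$, carry an invariant nondegenerate Hermitian form. The plan is to use the Clifford algebra directly. Write $V=\mathbb R^{n_1,n_2}$ with quadratic form $Q$, and let $C(V)$ be its real Clifford algebra, so that $\Spin(n_1,n_2)\subset C^0(V)$ is generated by even products $v_1v_2$ with $Q(v_1)Q(v_2)=\pm1$. The complex Clifford algebra $C(V)\otimes\mathbb C\cong C(n_1+n_2,\mathbb C)$ acts on $S$, irreducibly if $n_1+n_2$ is odd; if $n_1+n_2$ is even, $S=S^e\oplus S^o$ under $C^0$.

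First I construct an invariant Hermitian form on $S$ for the compact form. Take the positive definite form $Q_+=Q$ on $V^+=\mathbb R^{n_1}$ and $-Q$ on $V^-=\mathbb R^{n_2}$. Realize $S$ as a $*$-representation of the compact Clifford algebra $C(\mathbb R^{n_1+n_2},\text{pos.\ def.})\otimes\mathbb C$, using the standard Fock-space model: on $\Lambda^\bullet(W)$ for a maximal isotropic $W$, creation and annihilation operators are adjoint. This gives a positive definite Hermitian form $\langle\,,\rangle$ on $S$ in which each unit vector $e_i$ acts as a self-adjoint involution (up to a sign convention). Next I pass to the indefinite algebra. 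Set $e_j'=ie_j$ for the basis vectors of $V^-$, so that $(e_j')^2=-e_j^2$ realizes the indefinite form. Then $e_i$ for $i\le n_1$ are self-adjoint, while the $e_j'$ are skew-adjoint for $\langle\,,\rangle$.

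Now I twist the form. Let $\omega=e_1e_2\cdots e_{n_1}$ be the volume element of the positive part (here $n_1=2m_1$ is even). Define $\beta(s,t)=\langle \omega s,t\rangle$, possibly multiplied by a scalar $i^{m_1}$ or similar to make $\beta$ Hermitian; I expect $\omega^*=\pm\omega$ with the sign fixed by $m_1$. A vector $e_i\in V^+$ anticommutes with $\omega$ (since $n_1$ is even), while $e_j'\in V^-$ commutes with $\omega$. Combining this with self- or skew-adjointness, every $v\in V$ satisfies $\beta(vs,t)=\pm\beta(s,vt)$ with a uniform sign $\epsilon$: $e_i$ gives $-1$ from anticommutation times $+1$ from self-adjointness, and $e'_j$ gives $+1$ times $-1$. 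So each $v$ is $\epsilon$-adjoint for $\beta$ with the same $\epsilon=-1$. Hence an even product $g=v_1\cdots v_{2k}$ satisfies $\beta(gs,gt)=\beta(s,\bar g g t)$, where $\bar g$ is the reversed product. Since $g\bar g=\pm1$ scalar on $\Spin$, and equals $1$ on the identity component by continuity, $\beta$ is $\Spin(n_1,n_2)$-invariant.

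It remains to treat the half-spin case. Since $\omega$ is even, it preserves $S^e$ and $S^o$. Moreover $S^e\perp S^o$ for $\langle\,,\rangle$ in the Fock model, because the grading by form degree is orthogonal. So $\beta$ restricts to nondegenerate invariant Hermitian forms on each half-spin representation. Nondegeneracy on $S$ holds because $\omega$ is invertible. I expect the main obstacle to be sign bookkeeping, namely choosing the scalar so that $\beta$ is genuinely Hermitian and checking the $\omega$-adjointness. Consistency with Deligne's description of the invariant forms on spinors, which the paper cites, is a useful check. This bookkeeping is also what is needed later to read off which weights lie in $S^\pm$ for the Chern character.
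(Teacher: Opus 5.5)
Your construction is correct, and it takes a genuinely different route from the paper. The paper never writes down a form. It uses the criterion that an irreducible $U$ is Hermitian iff $U^*\cong\overline U$, and reads off from the weights that $S$ is self-dual and that $S^e,S^o$ are self-dual or mutually dual according to the parity of $m_1+m_2$. It then quotes Deligne's table of real, quaternionic, or conjugate-pair structures on spinors, with a case split by $n_1-n_2 \bmod 4$ in the half-spin case.

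You instead exhibit the form $\beta(s,t)=\langle\Omega s,t\rangle$ with $\Omega=i^{m_1}e_1\cdots e_{2m_1}$. The bookkeeping you deferred works out. With self-adjoint $e_i$ one has $\omega^*=(-1)^{m_1}\omega$, so $\Omega$ is self-adjoint with $\Omega^2=1$. Rescaling $\omega$ does not change the (anti)commutation signs. A positive definite $\langle\,,\rangle$ with self-adjoint $e_i$ also exists in odd dimension, e.g.\ by averaging over the finite group $\{\pm e_{i_1}\cdots e_{i_k}\}$.

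What your approach buys:
\begin{enumerate}[(i)]
\item Uniformity: there is no mod $4$ case split and no appeal to irreducibility or an external classification.
\item An explicit signature decomposition: $S^\pm$ are the $\pm1$-eigenspaces of $\Omega$, the chirality operator of $\Spin(2m_1)$. These are $S_1^e\otimes S_2$ and $S_1^o\otimes S_2$ up to order, which is exactly the decomposition the paper feeds into its formula for $\gamma(S)$ and justifies separately.
\end{enumerate}
The paper's route is shorter given the citation, and its criterion is the same one it uses for the exceptional groups.

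One precision point. Since $\beta(gs,gt)=\beta(s,\bar g g\,t)$, invariance holds exactly when $\bar g g=1$. These elements are the real points of the algebraic spin group, the double cover of $\SO^0(n_1,n_2)$, which is the group relevant here. An even product such as $e_1e_1'$, with one positive and one negative unit vector, has $\bar g g=-1$ and flips the sign of $\beta$. By Schur, no invariant Hermitian form on $S^e$ exists for that larger group, so your restriction to the identity component is necessary, not cosmetic.
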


\begin{proof}
First assume $n_2$ is odd. Recall that it suffices to show $S^*\cong \overline{S}$. First observe from the description of the weights that $S^*\cong S$ (the weights of $S^*$ are the negatives of the weights of $S$). Next we use that  $S$ has either a real or quaternionic structure as a representation of $\Spin(n_1,n_2)$; see \cite[\S1.4]{deligne}. This means that there is an antilinear equivariant map $S\to S$ whose square is $\pm1$, so this map gives an isomorphism $S\cong\overline S$. Thus $\overline S\cong S\cong S^*$, so $S$ is Hermitian. 

Next assume $n_2$ is even. By looking at the weights, we see that $S^e$ and $S^o$ are self-dual if $m_1+m_2$ is even, and $S^e$ and $S^o$ are dual to each other if $m_1+m_2$ is odd. 

If $m_1+m_2$ is even, then the signature $n_1-n_2$ is $0\pmod 4$, which implies that $S^e$ and $S^o$ have either an equivariant real or quaternionic structure \cite[loc.\ cit.]{deligne}, so again we conclude $\overline{S^e}\cong S^e\cong (S^e)^*$, and the same holds for $S^o$. 

If $m_1+m_2$ is odd, then $n_1-n_2\equiv 2\pmod 4$, so $S^e$ and $S^o$ are conjugate representations of $\Spin(n_1,n_2)$ \cite[loc.\ cit.]{deligne}. Thus $\overline{S^e}\cong S^o\cong (S^e)^*$ and $\overline{S^o}\cong S^e\cong (S^o)^*$. 
\end{proof}

Next we compute the $\gamma:R(G)\to H^{\ev}(BG)$ on the spin representations. 

For $\Spin(2m_1,2m_2+1)$, write $S_1$ and $S_2$ for the spin representations of $\Spin(2m_1)$ and $\Spin(2m_2+1)$, respectively. The representation $S_2$ is irreducible, and $S_1$ decomposes $S_1=S_1^e\oplus S_1^o$ with $S_1^e,S_1^o$ irreducible. Accordingly, the irreducible decomposition of $S\cong S_1\otimes S_2$ with respect to the action of the maximal compact $\Spin(2m_1)\times \Spin(2m_2+1)$ is given by 
\[S=(S_1^e\otimes S_2)\oplus (S_1^o\otimes S_2).\]
This also gives the Hermitian decomposition $S$, i.e.\ $S^+=S_1^e\otimes S_2$ and $S^-=S_1^o\otimes S_2$. Given this, we obtain the following expression for the $\gamma(S)=\ch(S^+)-\ch(S^-)$: 
\begin{equation}\label{eqn:spin-chern}
\gamma(S)=\sum_{\epsilon, \delta} \left(\prod_{i=1}^{m_1}\epsilon_i\right) e^{(\epsilon\cdot x+\delta\cdot y)/2}, 
\end{equation}
where $x=(x_1,\ldots,x_{m_1})$ and $\epsilon=(\epsilon_1,\ldots,\epsilon_{m_1})\in\{\pm1\}^{m_1}$ and $\epsilon\cdot x:=\sum \epsilon_ix_i$; similarly, $y=(y_1,\ldots,y_{m_2})$ and $\delta=(\delta_1,\ldots,\delta_{m_2})\in\{\pm1\}^{m_2}$; the sum ranges over all choices of $\epsilon$ and $\delta$. 

\begin{lem}\label{lem:spin-chern}
For the spin representation $S$ of $\Spin(2m_1,2m_2+1)$, the lowest-degree nonzero term of $\gamma(S)$ is a multiple of $e=x_1\cdots x_{m_1}$. 
\end{lem}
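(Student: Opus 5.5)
Starting from formula \eqref{eqn:spin-chern}, the plan is to factor $\gamma(S)$ as a product of an $x$-part and a $y$-part and read off the lowest-degree term of each factor. Since the exponent $(\epsilon\cdot x+\delta\cdot y)/2$ splits as a sum, and the sign $\prod_i\epsilon_i$ depends only on $\epsilon$, we have
\[
\gamma(S)=\left(\sum_{\epsilon}\Big(\prod_{i=1}^{m_1}\epsilon_i\Big)e^{\epsilon\cdot x/2}\right)\left(\sum_{\delta}e^{\delta\cdot y/2}\right)=\prod_{i=1}^{m_1}\left(e^{x_i/2}-e^{-x_i/2}\right)\cdot\prod_{j=1}^{m_2}\left(e^{y_j/2}+e^{-y_j/2}\right).
\]
The second equality is the key step. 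Expand each product over $i$ (respectively $j$) by choosing one of the two summands in each factor. This recovers exactly the sum over $\epsilon\in\{\pm1\}^{m_1}$ (respectively $\delta$), and the choice of $-e^{-x_i/2}$ contributes the sign $\epsilon_i=-1$.

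Next we examine the two factors. Each factor $e^{x_i/2}-e^{-x_i/2}=2\sinh(x_i/2)$ is a power series whose lowest-degree term is $x_i$. Each factor $e^{y_j/2}+e^{-y_j/2}=2\cosh(y_j/2)$ has constant term $2$. Hence the product has lowest-degree term $x_1\cdots x_{m_1}\cdot 2^{m_2}=2^{m_2}e$. All remaining terms have strictly higher degree, since they arise either from higher terms of some $\sinh$ factor or from positive-degree terms of some $\cosh$ factor. Because $2^{m_2}\neq0$, this term is nonzero, and the lemma follows.

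The only point needing care is the bookkeeping in \eqref{eqn:spin-chern}. We must confirm that the identification $S^+=S_1^e\otimes S_2$, $S^-=S_1^o\otimes S_2$ produces the sign $\prod\epsilon_i$, with even-spin weights taken positively. If the convention is instead $(-1)^{\#\{-\}}$ up to a global sign, the conclusion changes only by an overall sign. The remark on isogenies (Remark \ref{rmk:isogeny}) covers the fact that the variables $x_i/2$ live on a cover of the torus. So I expect no real obstacle beyond this factorization.
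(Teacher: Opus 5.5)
Your proof is correct, but it reaches the conclusion by a different mechanism than the paper.

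The paper never factors $\gamma(S)$. It expands the degree-$k$ term $\sum_{\epsilon,\delta}\bigl(\prod_i\epsilon_i\bigr)(\epsilon\cdot x+\delta\cdot y)^k$ and writes the coefficient of a monomial $x_I^{\alpha_I}y_J^{\beta_J}$ as the character sum $\sum_{\epsilon,\delta}\prod_i\epsilon_i^{\alpha_i+1}\prod_j\delta_j^{\beta_j}$. Whenever some $x_{i_0}$ is missing from the monomial, flipping $\epsilon_{i_0}$ cancels the terms in pairs, so nothing survives below degree $m_1$. It then checks that the coefficient of $x_1\cdots x_{m_1}$ is nonzero.

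Your factorization $\gamma(S)=\prod_i 2\sinh(x_i/2)\prod_j 2\cosh(y_j/2)$ packages all of that cancellation into the oddness of $\sinh$, and it gives somewhat more. It shows $\gamma(S)=e\cdot u$ with $u$ a power series of constant term $2^{m_2}$, so every term, not only the lowest, is divisible by $e$. It also pins the leading coefficient at exactly $2^{m_2}$ rather than nonzero ``up to a scalar''.

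The paper's coefficient-extraction argument has the modest advantage of not needing the weight set to be a full product of sign choices. Your method handles the parity-restricted sums as well, though. The same manipulation gives $\gamma(S^e)-\gamma(S^o)=\prod_i 2\cosh(x_i/2)\prod_j 2\sinh(y_j/2)$, whose lowest-degree term is $2^{m_1}f$; this is precisely the computation the paper omits in Lemma \ref{lem:even-spin-chern}.

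Your worry about the sign convention is unnecessary, since $(-1)^{\#\{i:\epsilon_i=-1\}}$ is literally $\prod_i\epsilon_i$. The only remaining choice is which of $S_1^e\otimes S_2$, $S_1^o\otimes S_2$ is $S^+$, and that affects only an overall sign.
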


\begin{proof}
Fix $k\ge0$. We will show that the degree-$k$ term of $\gamma(S)$ is 0 for $k<m_1$. Using (\ref{eqn:spin-chern}), this term is (up to a scalar multiple) equal to 
\[
\sum_{\epsilon,\delta} \left(\prod_{i=1}^{m_1}\epsilon_i\right)\big(\epsilon_1x_1+\cdots+\epsilon_{m_1}x_{m_1}+\delta_1y_1+\cdots+\delta_{m_2}y_{m_2}\big)^k
\]
Fix a monomial 
\[x_I^{\alpha_I}y_J^{\beta_J}:=x_{i_1}^{\alpha_{i_1}}\cdots x_{i_r}^{\alpha_{i_r}}y_{j_1}^{\beta_{j_1}}\cdots y_s^{\beta_{j_s}}\]
with $r+s=k$. The coefficient on this monomial is (up to a scalar multiple, which is a multinomial coefficient) equal to 
\[\sum_{\epsilon,\delta}\left(\prod_{i=1}^{m_1}\epsilon_i\right)\epsilon_1^{\alpha_1}\cdots\epsilon_{m_1}^{\alpha_{m_2}}\delta_1^{\beta_1}\cdots\delta_{m_2}^{\beta_{m_2}}=\sum_{\epsilon,\delta}\left(\prod_{i=1}^{m_1}\epsilon_i^{\alpha_i+1}\right)\left(\prod_{j=1}^{m_2}\delta_j^{\beta_j}\right).\]
Here by convention $\alpha_i=0$ if $i\notin I=\{i_1,\ldots,i_r\}$ and $\beta_j=0$ if $j\notin J=\{j_1,\ldots,j_s\}$. 

If some index $i_0$ does not appear in $I=\{i_1,\ldots,i_r\}$, then since $\alpha_{i_0}=0$, then the terms in the sum with $\epsilon_{i_0}=\pm1$ cancel in pairs. Then in order for the coefficient on $x_I^{\alpha_I}y_J^{\beta_J}$ to be nonzero, we must have $I=\{1,\ldots,m_1\}$, so $k\ge m_1$. 

Finally, we show that the coefficient on $x_1\cdots x_{m_1}$ is nonzero. In terms of the preceding notation, this is the case $I=\{1,\ldots,m_1\}$ and $J$ is empty and $\alpha_i=1$ for each $i$. Then the coefficient is (up to a scalar multiple) equal to 
\[\sum_\epsilon\epsilon_i^2=2^{m_1}.\qedhere\]
\end{proof}

For $\Spin(2m_1,2m_2)$, write $S_1$ and $S_2$ for the spin representations of $\Spin(2m_1)$ and $\Spin(2m_2)$, respectively, and write $S_i=S_i^e\oplus S_i^o$ for the irreducible decomposition of these representations. Accordingly, the irreducible decomposition of $S^e$ and $S^o$ with respect to the action of  $K=\Spin(2m_1)\times \Spin(2m_2)$ is given as follows. 
\[S^e=(S_1^e\otimes S_2^e)\oplus (S_1^o\otimes S_2^o)\>\>\>\text{ and }\>\>\>
S^o=(S_1^e\otimes S_2^o)\oplus (S_1^o\otimes S_2^e).\]
Since each summand is irreducible, we conclude that the above decompositions of $S^e$ and $S^o$ agree with the Hermitian decomposition. Given this, we obtain, similar to the $\SO(2m_1,2m_2+1)$ case,  
\begin{equation}\label{eqn:even-spin-chern}
\gamma(S^e)=\sum_{\epsilon, \delta} \left(\prod_{i=1}^{m_1}\epsilon_i\right) e^{(\epsilon\cdot x+\delta\cdot y)/2}, 
\end{equation}
where the sum is over $\epsilon\in\{\pm1\}^{m_1}$ and $\delta\in\{\pm1\}^{m_2}$ such that $\bigl(\prod_{i=1}^{m_1}\epsilon_i\bigr)\bigl(\prod_{j=1}^{m_2}\delta_j\bigr)=1$, 
and 
\begin{equation}\label{eqn:odd-spin-chern}
\gamma(S^o)=\sum_{\epsilon, \delta} \left(\prod_{i=1}^{m_1}\epsilon_i\right) e^{(\epsilon\cdot x+\delta\cdot y)/2}, 
\end{equation}
where the sum is over $\epsilon,\delta$ such that $\bigl(\prod_{i=1}^{m_1}\epsilon_i\bigr)\bigl(\prod_{j=1}^{m_2}\delta_j\bigr)=-1$. 

\begin{lem}\label{lem:even-spin-chern}
Let $S^e,S^o$ be the half-spin representations of $\Spin(2m_1,2m_2)$. The lowest-degree nonzero term of $\gamma(S^e)+\gamma(S^o)$ is a multiple of $e=x_1\cdots x_{m_1}$. The lowest-degree nonzero term of $\gamma(S^e)-\gamma(S^o)$ is a multiple of $f=y_1\cdots y_{m_2}$. 
\end{lem}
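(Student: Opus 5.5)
The plan is to reduce both claims to the computation already done in the proof of Lemma \ref{lem:spin-chern}. The key step is to combine the two formulas (\ref{eqn:even-spin-chern}) and (\ref{eqn:odd-spin-chern}) into single sums over all of $\{\pm1\}^{m_1}\times\{\pm1\}^{m_2}$. The constraints on $(\epsilon,\delta)$ in these two formulas are complementary, namely $\prod\epsilon_i\prod\delta_j=1$ and $\prod\epsilon_i\prod\delta_j=-1$.

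\emph{The sum.} Adding the two formulas gives
\[\gamma(S^e)+\gamma(S^o)=\sum_{\epsilon,\delta}\Bigl(\prod_{i=1}^{m_1}\epsilon_i\Bigr)e^{(\epsilon\cdot x+\delta\cdot y)/2},\]
with the sum over all $\epsilon,\delta$. This is literally the expression (\ref{eqn:spin-chern}), so the argument of Lemma \ref{lem:spin-chern} applies verbatim. First expand the degree-$k$ term. The coefficient of a monomial $x^{\alpha}y^{\beta}$ is a multinomial coefficient times $\sum_{\epsilon,\delta}\prod_i\epsilon_i^{\alpha_i+1}\prod_j\delta_j^{\beta_j}$. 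This vanishes unless every $\alpha_i$ is odd (and every $\beta_j$ is even), so nonzero terms have degree at least $m_1$. Finally, the coefficient of $x_1\cdots x_{m_1}$ is a nonzero multiple of $2^{m_1}\cdot 2^{m_2}$.

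\emph{The difference.} Subtracting, each term is multiplied by $+1$ when $\prod\epsilon_i\prod\delta_j=1$ and by $-1$ otherwise, i.e.\ by the factor $\prod\epsilon_i\prod\delta_j$. Since $\epsilon_i^2=1$, this gives
\[\gamma(S^e)-\gamma(S^o)=\sum_{\epsilon,\delta}\Bigl(\prod_{j=1}^{m_2}\delta_j\Bigr)e^{(\epsilon\cdot x+\delta\cdot y)/2}.\]
This is the same expression with the roles of $(x,\epsilon,m_1)$ and $(y,\delta,m_2)$ interchanged. Running the same parity argument, the coefficient of $x^\alpha y^\beta$ is proportional to $\sum\prod_i\epsilon_i^{\alpha_i}\prod_j\delta_j^{\beta_j+1}$. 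This is zero unless every $\beta_j$ is odd, so the lowest possible degree is $m_2$. The coefficient of $f=y_1\cdots y_{m_2}$ is a nonzero multiple of $2^{m_1+m_2}$.

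\emph{Expected obstacle.} The only real point is the sign bookkeeping: checking that the stated formulas (\ref{eqn:even-spin-chern}) and (\ref{eqn:odd-spin-chern}), with their common sign factor $\prod\epsilon_i$, combine as claimed. After that, everything is the cancellation-in-pairs argument already used for Lemma \ref{lem:spin-chern}. I would also remark that exchanging the two factors is legitimate even though $K$ is only isogenous to $\Spin(2m_1)\times\Spin(2m_2)$, by Remark \ref{rmk:isogeny}.
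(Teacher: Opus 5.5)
Your proposal is correct and matches the paper's proof: the sum is literally the expression (\ref{eqn:spin-chern}), so Lemma \ref{lem:spin-chern} applies, and the difference is the ``similar'' computation that the paper omits. Your sign identity $\prod_i\epsilon_i\cdot\bigl(\prod_i\epsilon_i\prod_j\delta_j\bigr)=\prod_j\delta_j$ fills in that omitted step, since it lets the same cancellation-in-pairs argument run with the roles of $x$ and $y$ exchanged.
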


\begin{proof}
The sum $\gamma(S^e)+\gamma(S^o)$ is equal to $\gamma(S)$, whose computation is the same as Lemma \ref{lem:spin-chern}. The computation of $\gamma(S^e)-\gamma(S^o)$ is similar, so we omit it. 
\end{proof}

\subsubsection*{\boldmath$G_{2(2)}$ }

The remaining sections treat the exceptional groups. A helpful reference is Adams \cite{adams}, which has many computations that we will use. 

\begin{proof}[Proof of Theorem \ref{thm:rational-surjectivity} for $G_{2(2)}$]

Here $K=[\SU(2)\times\SU(2)]/\mathbb Z_2$, so $H^{\ev}(BG)\cong\mathbb Q[[x^2,y^2]]$. The adjoint representation $A$, when restricted to $K$, decomposes 
\begin{align*}
A&=\sigma_2^3\otimes\lambda_2^1+\big[\mathfrak{su}(2)+\mathfrak{su}(2)\big]\\[3mm]
&=\sigma_2^3\otimes\lambda_2^1+\big[\sigma_2^2\otimes 1+1\otimes\sigma_2^2\big],\end{align*}
\cite[\S22]{FH}, and we compute 
\[\begin{array}{rl}
\gamma(A) &= (e^{3x}+e^x+e^{-x}+e^{-3x})(e^y+e^{-y})-(e^{2x}+e^{-2x}+e^{2y}+e^{-2y}+2)\\[2mm]
&=2+16x^2+(\text{higher order terms})
\end{array}
\]
We compute further
\[
\gamma(\Lambda^2A)=-5 - 16 x^2 - 16 y^2+(\text{h.o.t.})
\]
From this we quickly conclude that the image of $\gamma\otimes\mathbb Q$ is dense (by Lemma \ref{lem:densely-generate}). 
\end{proof}

\subsubsection*{\boldmath$F_{4(4)}$ }

Here $K=\big[\Sp(3)\times\SU(2)\big]/\mathbb Z_2$, and 
$H^{\ev}(BG)$ is isomorphic to $\mathbb Q[[p_1,p_2,p_3,y^2]]$. 

The adjoint representation $A$, when restricted to $K$, decomposes
\begin{align*}
A&=\hat\lambda_6^3\otimes\lambda_2^1+\big[\mathfrak{sp}(3)+\mathfrak{su}(2)\big]\\
&=\hat\lambda_6^3\otimes\lambda_2^1+\big[\sigma_6^2\otimes 1+1\otimes\sigma_2^2\big],
\end{align*}
where $\hat\lambda_6^3$ is the kernel of the contraction $\lambda_6^3\to\lambda_6^1$ given by evaluating the symplectic form. This computation can be done directly from the roots of $F_4$. We compute 
\begin{align*}
\gamma(A)&=\sum_{\epsilon\in\{\pm1\}^4}e^{\epsilon_1x_1+\epsilon_2x_2+\epsilon_3x_3+\epsilon_4y}+\sum_{1\le i\le 3}\>\sum_{\epsilon\in \{\pm1\}^2}e^{\epsilon_1x_i+\epsilon_2y}\\
&\quad{}-\left[4+(e^{2y}+e^{-2y})+\sum_{1\le i\le 3}(e^{2x_i}+e^{-2x_i})
+\sum_{1\le i<j\le 3}\>\sum_{\epsilon\in\{\pm1\}^2}e^{\epsilon_1x_i+\epsilon_2x_j}
\right]\\
&=4+ \big[2 p_1+ 10 y^2\big]+\left[ \frac{14}{3} p_2-\frac{5}{6} p_1^2+ 5 p_1 y^2-\frac{1}{6}y^4 \right]\\
&\quad{} +\left[\frac{23}{30} p_3+\frac{11}{15} p_1 p_2-\frac{29}{180} p_1^3+\frac{7}{6} p_2 y^2+ \frac{5}{12} p_1^2 y^2+\frac{5}{12}p_1 y^4-\frac{5}{36} y^6\right] +(\text{h.o.t.})
\end{align*}
and $\gamma(\Lambda^2A)=-18 -\big[28 p_1-4 y^2\big]+(\text{h.o.t.})$. By Lemma \ref{lem:adams-ops}, we conclude that each of 
\begin{gather*}
28 p_1-4 y^2\\
2 p_1+ 10 y^2\\
\frac{14}{3} p_2-\frac{5}{6} p_1^2+ 5 p_1 y^2-\frac{1}{6}y^4\\
\frac{23}{30} p_3+\frac{11}{15} p_1 p_2-\frac{29}{180} p_1^3+\frac{7}{6} p_2 y^2+ \frac{5}{12} p_1^2 y^2+\frac{5}{12}p_1 y^4-\frac{5}{36} y^6
\end{gather*}
is the lowest-degree nonzero term of some element in the image of $\gamma\otimes\mathbb Q$. Combining the first two, we obtain $p_1,y^2$ as lowest-degree nonzero terms, and then, in turn, we obtain $p_2$ and $p_3$ as lowest-degree nonzero terms. This proves the image of $\gamma\otimes\mathbb Q$ is dense by Lemma \ref{lem:densely-generate}.

\subsubsection*{\boldmath$F_{4(-20)}$ }

Here $K=\Spin(9)$, and $H^{\ev}(BG)\cong\mathbb Q[[p_1,p_2,p_3,p_4]]$. The adjoint representation $A$, when restricted to $K$, decomposes 
\[
A=S+\mathfrak{so}(9)=S+\lambda_9^2,
\]
where $S$ is the spin representation \cite[pg.\ 51]{adams}. We compute
\begin{align*}
\gamma(A)&=\sum _{\epsilon\in\{\pm1\}^4}e^{\frac{1}{2}(\epsilon_1x_1+\epsilon_2x_2+\epsilon_3x_3+\epsilon_4x_4)}-\sum_{i=1}^4\sum_{j=1}^4\sum_{\epsilon\in\{\pm1\}^2}e^{\epsilon_1x_i+\epsilon_2x_j}.\\
&=
-20-5 p_1+\frac{1}{3}p_2 -\frac{13}{24} p_1^2 \\
&\quad {}+\frac{5}{24} p_3 -\frac{1}{48}p_1 p_2 -\frac{11}{576} p_1^3\\
&\quad {}-\frac{221}{10080}p_4+\frac{79}{5040} p_1 p_3 -\frac{83}{40320} p_2^2 -\frac{109}{80640} p_1^2 p_2 -\frac{223}{645120} p_1^4+(\text{h.o.t.})
\end{align*}
Since the coefficient on each of $p_1,p_2,p_3,p_4$ is nonzero, we conclude using Corollary \ref{cor:dense} that the image of $\gamma\otimes\mathbb Q$ is dense.

\subsubsection*{\boldmath$E_{6(2)}$ }

Here $K\dot{=}\SU(6)\times\SU(2)$, and  $H^{\ev}(BG)\cong\mathbb Q[[c_2,\ldots,c_6,y^2]]$. The group $E_{6(2)}$ has a conjugate pair $U,\bar U$ of fundamental representations, $\dim U=27$. When restricted to $K\dot{=}\SU(6)\times\SU(2)$, 
\[U=\lambda_6^1\otimes\lambda_2^1+\lambda_6^4\otimes 1\>\>\>\text{ and }\>\>\>
\overline{U}=\lambda_6^5\otimes\lambda_2^1+\lambda_6^2\otimes 1.\]
See \cite[Table 15]{slanksy}. The representation $U$ of $G$ is Hermitian: from examining the decompositions for $U,\overline U$, we see that $\overline U\cong U^*$ (e.g.\ they have the same weights). Since $U$ splits into only two factors as a $K$-representation, this agrees with the decomposition $U=U^+\oplus U^-$. We compute 
\begin{align*}
\gamma(U)&=\sum_{1\le i\le 6}(e^{x_i+y}+e^{x_i-y})-\sum_{1\le i<j\le 6}e^{-x_i-x_j}\\
&=-3+(2 c_2+6 y^2)+2 c_3\\
&\quad+\left(-\frac{2}{3} c_4-\frac{1}{6}c_2^2-c_2 y^2+\frac{1}{2}y^4\right)
+\left(-\frac{1}{3}c_5+\frac{1}{2}c_3 y^2-\frac{1}{6}c_2 c_3\right)\\
&\quad {}-\frac{7}{30}c_6+\frac{1}{15}c_2 c_4-\frac{1}{120}c_3^2 +\frac{1}{180}c_2^3
-\frac{1}{12}c_2 y^4-\frac{1}{6}c_4 y^2+\frac{1}{12}c_2^2 y^2+\frac{1}{60}y^6\\
&\quad {}+(\text{h.o.t.})
\end{align*}
and $\gamma(\Lambda^2U)=-9+6 c_2-30 y^2+(\text{h.o.t.})$. By Lemma \ref{lem:adams-ops}, we conclude that each of 
\begin{gather*}
2 c_2+6 y^2\\
6 c_2-30 y^2\\
2 c_3\\
-\frac{2}{3} c_4-\frac{1}{6}c_2^2-c_2 y^2+\frac{1}{2}y^4\\
-\frac{1}{3}c_5+\frac{1}{2}c_3 y^2-\frac{1}{6}c_2 c_3\\
-\frac{7}{30}c_6+\frac{1}{15}c_2 c_4-\frac{1}{120}c_3^2 +\frac{1}{180}c_2^3
-\frac{1}{12}c_2 y^4-\frac{1}{6}c_4 y^2+\frac{1}{12}c_2^2 y^2+\frac{1}{60}y^6\\
\end{gather*}
is the lowest-degree nonzero term of some element in the image of $\gamma\otimes\mathbb Q$. Combining the first two, we obtain $c_2,y^2$ as lowest-degree nonzero terms, and then inductively we obtain $c_3,\ldots,c_6$. This proves the image of $\gamma\otimes\mathbb Q$ is dense by Lemma \ref{lem:densely-generate}.

\subsubsection*{\boldmath$E_{6(-14)}$ }

This is a case where it seems that the adjoint representation is insufficient to show that $\gamma\otimes\mathbb Q$ has dense image.

\begin{proof}[Proof of Theorem \ref{thm:rational-surjectivity} for $E_{6(-14)}$]

Here $K=\big[\Spin(10)\times\SO(2)\big]/\mathbb Z_4$, and 
\[H^{\ev}(BG)\cong\mathbb Q[[p_1,p_2,p_3,p_4,e,y]].\] The generators $y,p_1,p_2,e,p_3,p_4$ have dimension $2, 4, 8, 10, 12, 16$. 

The group $E_{6(-14)}$ has a conjugate pair $U,\bar U$ of fundamental representations, $\dim U=27$. When restricted to  $K=\big[\Spin(10)\times\SO(2)\big]/\mathbb Z_4$, 
\[U= 1\otimes\xi^{-4}+\lambda_{10}^1\otimes\xi^2+S^e\otimes\xi^{-1}\>\>\>\text{ and }\>\>\>
\overline{U}= 1\otimes\xi^{4}+\lambda_{10}^1\otimes\xi^{-2}+S^o\otimes\xi^{1},\]  
where $S^e$ is the ``even" half-spin representation, and $\xi\cong\mathbb C$ is the standard representation of $\SO(2)$ \cite[Cor.\ 8.3]{adams}. The representation $U$ of $G$ is Hermitian: from examining the decompositions for $U,\overline U$, we see that $\overline U\cong U^*$ (e.g.\ they have the same weights). The decomposition $U=U^+\oplus U^-$ is either $(\lambda_{10}^1\otimes\xi^2+1\otimes\xi^{-4})+S^e\otimes\xi^{-1}$
or 
$\lambda_{10}^1\otimes\xi^2+(1\otimes\xi^{-4}+S^e\otimes\xi^{-1})$. Other possibilities are ruled out using the fact that the real rank of $E_{6(-14)}$ is 2. Given this, we compute 
\[\gamma(U)=
\pm e^{-4y}+\sum_{1\le i\le 5}\>\sum_{\epsilon=\pm1} e^{\epsilon x_i+2y}-\sum_{\substack{\epsilon\in\{\pm1\}^5\\ \prod\epsilon_i=1}}e^{\frac{1}{2}(\epsilon_1x_1+\cdots+\epsilon_5x_5)-y}.\]
The sign ambiguity on the summand $e^{-4y}$ does not ultimately effect our computation. Write $\gamma(U)=u_0+u_2+u_4+u_6+\cdots$, where $u_{2i}$ is a homogeneous polynomial of (cohomological) degree $2i$. Then 
\begin{align*}
u_2&=-36 y \pm 4 y\\
u_4&=p_1-(12\pm 8) y^2 \\
u_8&= \frac{1}{3}p_2 -\frac{1}{24}p_1^2-p_1 y^2-\left(6\pm \frac{32}{3} \right)y^4 \\
u_{10}&=\frac{1}{2}e
-\frac{5}{24}p_1^2 y+\frac{1}{6}p_2 y-\frac{5}{3} p_1 y^3+\left(\pm \frac{128}{15} -\frac{14}{5}\right)y^5 \\
u_{12}&=\frac{1}{120}p_3+\frac{1}{80}p_1 p_2-\frac{7}{2880} p_1^3  -\frac{7}{48} p_1^2 y^2 +\frac{5}{12}p_2 y^2 -\frac{7}{12}p_1 y^4 - \frac{1}{2}ye-\left(\frac{13}{15} \pm \frac{256}{45}\right) y^6 \\
u_{16}&=\frac{19}{10080} p_4+\frac{1}{5040}p_1 p_3
-\frac{1}{13440}p_2^2+\frac{19}{80640}p_1^2 p_2-\frac{31}{645120} p_1^4-\frac{31}{5760}p_1^3 y^2+\frac{3}{160} p_1 p_2 y^2\\
    & \quad {}-\frac{1}{120}p_3 y^2-\frac{31}{360}p_1 y^6 +\frac{17}{144}p_2 y^4-\frac{31}{576} p_1^2 y^4 - \frac{1}{48}p_1 ye- \frac{1}{12}y^3e-\left(\frac{53}{840}\pm \frac{512}{315}\right) y^8.
\end{align*}
The key point is that since the coefficients on each of $y,p_1,p_2,e,p_3,p_4$ are nonzero, Corollary \ref{cor:dense} implies that the image of $\gamma\otimes\mathbb Q$ is dense. 
\end{proof}

\subsubsection*{\boldmath$E_{7(7)}$ }

Here $K=\SU(8)$, so $H^{\ev}(BG)\cong \mathbb Q[[c_2,\ldots,c_8]]$. The fundamental representation $U$ of $E_{7(7)}$ is 56-dimensional and when restricted to  $K=\SU(8)$, it decomposes
\[U=\lambda_8^2+\lambda_8^6.\]
See \cite[Thm.\ 11.1]{adams}. The representation $U$ of $G$ is Hermitian: $\overline{U}\cong U\cong U^*$ because $E_{7}$ has a unique 56-dimensional representation. Then 
\begin{align*}
\gamma(U)&=\sum_{1\le i<j\le 8} \big(e^{x_i+x_j}-e^{-(x_i+x_j)}\big)\\[3mm]
&=
4 c_3+\left(-\frac{2}{3} c_5-\frac{1}{3}c_2 c_3\right)
+\left(-\frac{7}{45}c_7+\frac{13}{180} c_2 c_5-\frac{1}{90}c_3 c_4+\frac{1}{90}c_2^2 c_3\right)+(\text{h.o.t.})
\end{align*}
We will also use the adjoint representation $A$. When restricted to $K$, there is a decomposition 
\[A=\lambda_8^4+\mathfrak{su}(8),\]
and we compute
\begin{align*}
\gamma(A)&=\sum_{1\le i_1<i_2<i_3<i_4\le 8}e^{x_{i_1}+x_{i_2}+x_{i_3}+x_{i_4}}-\left[7+\sum_{1\le i<j\le 8}(e^{x_i-x_j}+e^{x_j-x_i})
\right]\\
&=7
-4 c_2
+\left(\frac{16}{3} c_4-\frac{2}{3} c_2^2\right)
+\left(-\frac{8}{15} c_6-\frac{4}{5} c_2 c_4+\frac{4}{15} c_3^2+\frac{7}{45} c_2^3\right)\\
& \quad {}+\left(\frac{152}{315}c_8+\frac{16}{315} c_2 c_6+\frac{11}{630} c_3 c_5 -\frac{2}{105}c_4^2+\frac{19}{315} c_2^2 c_4-\frac{43}{1260} c_2 c_3^2-\frac{31}{2520} c_2^4\right).
\end{align*}
Observe that the coefficients of $c_3,c_5,c_7$ in $\gamma(U)$ are nonzero, and the coefficients of $c_2,c_4,c_6,c_8$ in $\gamma(A)$ are nonzero. Then using Corollary \ref{cor:dense} (twice -- separately for the even/odd-index variables), we conclude that the image of $\gamma\otimes\mathbb Q$ is dense.

\subsubsection*{\boldmath$E_{7(-5)}$ }

Here $K=\big[\Spin(12)\times\SU(2)\big]/\mathbb Z_2$, and so $H^{\ev}(BG)$ is isomorphic to $\mathbb Q[[p_1,\ldots,p_5,e,y^2]]$. The generators $p_1,y^2,p_2,p_3,e,p_4,p_5$ have degree $4,4,8,12,12,16,20$, respectively. 

The fundamental representation $U$ of $E_{7(-5)}$ is 56-dimensional and when restricted to  $K=\big[\Spin(12)\times\SU(2)\big]/\mathbb Z_2$, 
\[U=\lambda_{12}^1\otimes\lambda_2^1+S^o\otimes 1,\]
where $S^o$ is the ``odd" half-spin representation \cite[Cor.\ 8.2]{adams}. The representation $U$ of $G$ is Hermitian: $\overline{U}\cong U\cong U^*$ because $E_{7}$ has a unique 56-dimensional representation. Then 
\[
\gamma(U)=\sum_{1\le i\le 6}\>\sum_{\epsilon\in\{\pm1\}^2} e^{\epsilon_1 x_i+\epsilon_2y}- \sum_{\substack{\epsilon\in\{\pm1\}^6\\ \prod\epsilon_i=-1}}e^{\frac{1}{2}(\epsilon_1x_1+\cdots+\epsilon_6x_6)}
\]
Write $\gamma(U)=u_0+u_4+u_8+u_{12}+\cdots$ where $u_{4i}$ is a homogeneous polynomial of (cohomological) degree $4i$. Similarly write $\gamma(\Lambda^2U)=\sum v_{4i}$. We compute  
\begin{align*}
u_4&=4 p_1 -24 y^2\\
u_8&=16 p_2-2 p_1^2  -24 p_1 y^2-24 y^4\\
u_{12}&=12 p_3-360 e-\frac{7}{2}p_1^3+ 18 p_1 p_2  -60 p_1^2 y^2+  120 p_2 y^2 -60 p_1 y^4 -24 y^6\\
u_{16}&=152 p_4-\frac{31}{8}p_1^4+ 19 p_1^2 p_2 -6 p_2^2+ 16 p_1 p_3  -112 p_1^3 y^2+ 336 p_1 p_2 y^2\\
& \quad {}-336 p_3 y^2 -280 p_1^2 y^4+ 560 p_2 y^4 -112 p_1 y^6-24 y^8-840 p_1e\\
u_{20}&=1220 p_5-\frac{127}{32}p_1^5+ \frac{85}{4}p_1^3 p_2 -\frac{35}{2}p_1 p_2^2+ 5 p_1^2 p_3+ 40 p_2 p_3+ 310 p_1 p_4  -180 p_1^4 y^2\\
&\quad {}+ 720 p_1^2 p_2 y -360 p_2^2 y^2 -720 p_1 p_3 y^2+ 720 p_4 y^2
                                                                                                                                                  -840 p_1^3 y^4+ 2520 p_1 p_2 y^4\\
    & \quad {}-2520 p_3 y^4 
-840 p_1^2 y^6+ 1680 p_2 y^6 -180 p_1 y^8-24 y^{10}-945 p_1^2e -1260 p_2e\\
v_4&=
8 p_1 -240 y^2\\
v_8&=128 p_2-16 p_1^2+ 1344 y^4 -960 p_1 y^2\\
v_{12}&=-1056 p_3+ 8640e-292 p_1^3+ 1296 p_1 p_2 + 7680 y^6 -3120 p_1^2 y^2 -960 p_2 y^2+ 
 4800 p_1 y^4.
 \end{align*}
To conclude that $\gamma\otimes\mathbb Q$ has dense image, we use Lemma \ref{lem:adams-ops} to deduce that each of $u_{4i}, v_{4i}$ is the lowest-degree nonzero term of some power series $\hat u_{4i},\hat v_{4i}$ in the image of $\gamma\otimes\mathbb Q$. Combining $\hat u_4,\hat w_4$, we reach the same conclusion for $p_1,y^2$. Using this and $\hat u_8$, we reach the same conclusion for $p_2$. Using $p_1,y^2,p_2$ and $\hat u_{12}$ and $\hat v_{12}$ we obtain $12 p_3-360 e$ and hence $-1056 p_3+ 8640e$, and hence also $p_3$ and $e$ (as lowest degree nonzero terms). Finally, in a similar fashion, we inductively obtain $p_4, p_5$. Altogether, this proves the image of $\gamma\otimes\mathbb Q$ is dense by Lemma \ref{lem:densely-generate}. 
\qed

\begin{rmk}
For the computation of $E_{8(-24)}$, we will also need to know the weights of the adjoint representation of $G=E_{7(-5)}$ restricted to $K=\big[\Spin(12)\times\SU(2)\big]/\mathbb Z_2$, so we record that here. The adjoint representation $A$ decomposes as a $K$-representation 
\[
A=S^e\otimes\lambda_2^1+\big(\mathfrak{so}(12)+\mathfrak{su}(2)\big)=S^e\otimes\lambda_2^1+\big(\lambda_{12}^2\otimes 1+1\otimes \sigma_2^2\big).\]
See \cite[pg.\ 52]{adams}. 
The weights of $S^e\otimes\lambda_2^1$ are $\frac{1}{2}(\pm L_1\cdots\pm L_6)\pm J$, where there are an even number of ``$-$" on the summands $L_1,\ldots,L_6$. The weights of $\big(\lambda_{12}^2+\sigma_2^2\big)$ are $\pm L_i\pm L_j$ for $1\le i<j\le 6$ and $\pm 2J$, and $0$ with multiplicity 7. 
\end{rmk}

\subsubsection*{\boldmath$E_{7(-25)}$ }

Here $K=E_6\times\SO(2)$, and $H^{\ev}(BG)$ is isomorphic to 
\[\mathbb Q[[I_2,I_5,I_6,I_8,I_9,I_{12},z]],\] where $I_k$ is a polynomial in $x_1,\ldots,x_5,y$ of degree $k$ (and hence belongs to an element of $H^{2k}(BG)$).  In particular, $z$, $I_2$, $I_5$, $I_6$, $I_8$, $I_9$, $I_{12}$ have cohomological degrees $2$, $4$, $10$, $12$, $16$, $18$, $24$, respectively. 

The fundamental representation $U$ of $E_{7(-25)}$ is 56-dimensional and when restricted to  $K=E_6\times\SO(2)$, 
\[U=1\otimes\xi^3+\overline{W}\otimes\xi+W\otimes\xi^{-1}+1\otimes\xi^{-3},\]
where $W,\bar W$ are the fundamental representations of $E_6$ \cite[Table 15]{slanksy}. The representation $U$ of $G$ is Hermitian: : $\overline{U}\cong U\cong U^*$ because $E_{7}$ has a unique 56-dimensional representation. Consequently, the decomposition $U=U^+\oplus U^-$ satisfies $\dim U^+=\dim U^-$. This almost determines the decomposition -- we have not determined whether the decomposition pairs $1\otimes \xi^3$ with $\overline{W}\otimes\xi$ or with $W\otimes\xi^{-1}$. In any case, we are able to prove Theorem \ref{thm:rational-surjectivity} for $G=E_{7(-25)}$ without resolving this. 

To compute $\gamma(U)$, we ues the decomposition 
\[W\cong 1\otimes\eta^{-4}+\lambda_{10}^1\otimes\eta^2+S^e\otimes\eta^{-1}\] of $W$ as a representation of $[\Spin(10)\times\SO(2)]/\mathbb Z_4<E_6$ (that appeared in the discussion of $E_{6(-14)}$); here we write $\eta=\mathbb C$ for the standard representation of the $\SO(2)$ factor, and use $\eta$ instead of $\xi$ as above to distinguish the two factors of $\SO(2)$ (one appearing in $[\Spin(10)\times\SO(2)]/\mathbb Z_4\hookrightarrow E_6$ and other appearing in $K=E_6\times\SO(2)$). Similarly, 
\[\overline W\cong 1\otimes\eta^{4}+\lambda_{10}^1\otimes\eta^{-2}+S^o\otimes\eta^{1}.\]

We have 
\begin{align*}
\gamma(U)&=\pm(e^{3z}-e^{-3z})\\
&\quad {} +e^{-4y-z}+\sum_{1\le i\le 5}\>\sum_{\epsilon=\pm1} e^{\epsilon x_i+2y-z}+\sum_{\substack{\epsilon\in\{\pm1\}^5\\ \prod\epsilon_i=1}}e^{\frac{1}{2}(\epsilon_1x_1+\cdots+\epsilon_5x_5)-y-z}\\[3mm]
&\quad {} -\Bigl[e^{4y+z}+\sum_{1\le i\le 5}\>\sum_{\epsilon=\pm1} e^{\epsilon x_i-2y+z}+\sum_{\substack{\epsilon\in\{\pm1\}^5\\ \prod\epsilon_i=-1}}e^{\frac{1}{2}(\epsilon_1x_1+\cdots+\epsilon_5x_5)+y+z}\Bigr].
\end{align*}
Write $\gamma(U)=\sum u_i$ and $\gamma(\Lambda^2U)=\sum v_i$, where $u_i,v_i$ are homogeneous polynomials of degree $i$. 
We compute 
\begin{align*}
u_2&= 54 z\pm 6 z\\
u_4&= 0\\
u_{10}&= -2I_5+\frac{5}{6}I_2^2 z+20 I_2z^3 +(540\pm 486) z^5\\
u_{12}&= 0\\
u_{16}&= 0\\
u_{18}&=  -2 I_9+ 18 I_8z +168 I_6z^3-21I_5I_2 z^2- 252  I_5z^4+ 21 I_2^2 z^5 + 72  I_2 z^7\\
&\quad {} +(39420\pm39366) z^9\\
u_{24} &= 0,
\end{align*}
and working modulo the ideal generated by $z$, we compute  
\begin{align*}
v_2&\equiv  0\\
v_4&\equiv - 4 I_2\\
v_{10}&\equiv 0\\
v_{12}&\equiv - 64 I_6\\
v_{16}&\equiv  - 256 I_8\\
v_{18}&\equiv 0\\
v_{24}&\equiv  - 4096 I_{12}+924 I_5^2I_2.
\end{align*}
Now we conclude that $\gamma\otimes\mathbb Q$ has dense image after applications of Lemmas \ref{lem:densely-generate} and \ref{lem:adams-ops}.

\subsubsection*{\boldmath$E_{8(8)}$ }

Here $K=\Spin(16)/\mathbb Z_2$, so $H^{\ev}(BG)$ is isomorphic to $\mathbb Q[[p_1,\ldots,p_7,e]]$. The fundamental representation of $E_{8(8)}$ is the adjoint representation $A$. When restricted to $K=\Spin(16)/\mathbb Z_2$, 
\[A\cong S^e+\mathfrak{so}(16).\]
See \cite[Ch.\ 6]{adams}. Then 
\[
\gamma(A)=\sum_{\substack{\epsilon\in\{\pm1\}^8\\ \prod\epsilon_i=1}}e^{\frac{1}{2}(\epsilon_1x_1+\cdots+\epsilon_8x_8)}- \sum_{1\le i<j\le 8}\>\sum_{\epsilon\in\{\pm1\}^2}
e^{\epsilon_1 x_i+\epsilon_2x_j}-8.\]
For ease of notation, we write $\gamma(A)=a_0+\frac{1}{4!}a_4+\frac{1}{8!}a_8+\frac{1}{12!}a_{12}+\cdots$, where $a_{4i}$ is a homogeneous polynomial of (cohomological) degree $4i$. We compute 
\begin{align*}
a_4&=4 p_1\\
a_8&={64 p_2 -20 p_1^2}\\
a_{12}&={192 p_3+ 48 p_1 p_2 -26 p_1^3}\\
a_{16}&={20160 e -352 p_4+ 688 p_1 p_3 -104 p_2^2+ 
 12 p_1^2 p_2 -\frac{55}{2}p_1^4}\\
a_{20}&={9920 p_5 -3080 p_1 p_4+ 160 p_2 p_3+ 
 1820 p_1^2 p_3 -430 p_1 p_2^2 -35 p_1^3 p_2 -\frac{223}{8} p_1^5+75600 p_1e}\displaybreak\\
a_{24}&=41952 p_6+ 38568 p_1 p_5+ 9528 p_2 p_4 -13818 p_1^2 p_4 -4344 p_3^2+ 
 1800 p_1 p_2 p_3+ 
 4116 p_1^3 p_3 \\
& \quad {}-206 p_2^3 -\frac{2337}{2} p_1^2 p_2^2 -\frac{753}{8} p_1^4 p_2 -\frac{895}{32} p_1^6+166320 p_2e+ 124740 p_1^2e\\
a_{28}&=1337728 p_7+ 173600 p_1 p_6 -56448 p_2 p_5+ 116690 p_1^2 p_5+ 31640 p_3 p_4\\
&\quad {}+  45010 p_1 p_2 p_4 -\frac{86233}{2}p_1^3p_4 -30548 p_1 p_3^2+ 2394 p_2^2 p_3\\
&\quad {}+ 9471 p_1^2 p_2 p_3+ \frac{65653}{8}p_1^4 p_3-\frac{3241}{2}p_1 p_2^3 -\frac{20573}{8} p_1^3 p_2^2 -\frac{5355}{32}p_1^5 p_2 -\frac{3583}{128} p_1^7\\
&\quad {}+720720 p_3e+ 540540 p_1 p_2e+ 135135 p_1^3e.
\end{align*}
Similarly writing $\Lambda^2(A)=\sum \frac{1}{(4i)!}b_{4i}$, we have 
\begin{align*}
b_4&={-88 p_1}\\
b_8&={512 p_2 -400 p_1^2}\\
b_{12}&={1536 p_3+ 4224 p_1 p_2 -2368 p_1^3}\\
b_{16}&={-2419200 e -187136 p_4+ 82304 p_1 p_3+ 127168 p_2^2 -85664 p_1^2 p_2+ 7220 p_1^4}.
\end{align*}
To conclude that $\gamma\otimes\mathbb Q$ has dense image, we use Lemma \ref{lem:adams-ops} to deduce that each of $a_{4i}$ for $1\le i\le 7$ is the lowest-degree nonzero term of some element in the image of $\gamma\otimes\mathbb Q$. Then inductively we reach the same conclusion for $p_1,p_2,p_3$. Next we use $a_{16}$ and $b_{16}$ to show the same claim for $p_4$ and $e$. Then we continue inductively to obtain the same claim for $p_5,p_6,p_7$. Altogether, this proves the image of $\gamma\otimes\mathbb Q$ is dense by Lemma \ref{lem:densely-generate}.

\subsubsection*{\boldmath$E_{8(-24)}$ }

Here $K=E_7\times\SU(2)$, so $H^{\ev}(BG)$ is isomorphic to 
\[\mathbb Q[I_2,I_6,I_8,I_{10},I_{12},I_{14},I_{18},z^2].\] The generators 
$z^2,I_2,I_6,I_8,I_{10},I_{12},I_{14},I_{18}$ have cohomological degrees $4$, $4$, $12$, $16$, $20$, $24$, $28$, $36$.

The fundamental representation of $E_{8(-24)}$ is the adjoint representation $A$. When restricted to $K=E_7\times\SU(2)$, 
\[
A=U\otimes\lambda_2^1+(\mathfrak e_7+\mathfrak{su}(2)),
\]
where $U$ is the fundamental representation of $E_7$ \cite[pg.\ 54]{adams}. For computation, we use that $\big[\Spin(12)\times\SU(2)\big]/\mathbb Z_2\hookrightarrow E_7$ (a maximal rank subgroup, which appears in the computation for $E_{7(-5)}$), and with respect to this subgroup, we have the following decompositions 
\[U\cong\lambda_{12}^1\otimes\lambda_2^1+S^o\otimes 1\>\>\>\text{ and } \>\>\>
\mathfrak e_7\cong S^e\otimes\lambda_2^1+\big(\lambda_{12}^2\otimes 1+1\otimes \sigma_2^2\big).
\]

Then computing weights of $\big[\Spin(12)\times\SU(2)\big]/\mathbb Z_2\times\SU(2)$, we have
\begin{align*}
\gamma(A) &=\sum_{1\le i\le 6}\>\sum_{\epsilon\in\{\pm1\}^3} e^{\epsilon_1 x_i+\epsilon_2y+\epsilon_3z}+ \sum_{\substack{\epsilon\in\{\pm1\}^6\\ \prod_{i=1}^6\epsilon_i=-1}}e^{\frac{1}{2}(\epsilon_1x_1+\cdots+\epsilon_6x_6)+\epsilon_7z}
\\[3mm]
& \quad{} - \left[8+e^{2y}+e^{-2y}+e^{2z}+e^{-2z}\right. \\
& \qquad\quad\left.+\sum_{\substack{\epsilon\in\{\pm1\}^6\prod_{i=1}^6\epsilon_i=1}} e^{\frac{1}{2}(\epsilon_1 x_1\cdots\epsilon_6 x_6)+\epsilon_7 y}+\sum_{1\le i<j\le 6}\>\sum_{\epsilon\in\{\pm1\}^2}
e^{\epsilon_1 x_i+\epsilon_2x_j}\right].
\end{align*}
Write $\gamma(A)=\sum a_i$, where $u_i$ is a homogeneous polynomial of (cohomological) degree $i$. We compute 
\begin{align*}
a_4&=- I_2+104 z^2 \\
a_{12}&= 4 I_6- \frac{5}{288}I_2^3+ \frac{5}{4} z^2 I_2^2 + 30 z^4 I_2 -16 z^6\\
a_{16}&=- 8 I_8+ \frac{7}{3} I_2 I_6 - \frac{35}{6912}I_2^4 + 56 z^2 I_6 + \frac{35}{6} z^4 I_2^2 + 56 z^6 I_2 -400 z^8 \\
a_{20}&=4 I_{10}+(\text{a polynomial in $z^2,I_2,I_6,I_8$})\\
a_{24}&=32 I_{12}+(\text{a polynomial in $z^2,I_2,I_6,I_8,I_{10}$})\\
a_{28}&=\frac{484}{29}I_{14}+(\text{a polynomial in $z^2,I_2,I_6,I_8,I_{10},I_{12}$})\\
a_{36}&=\frac{114116}{1229}I_{18}+(\text{a polynomial in $z^2,I_2,I_6,I_8,I_{10},I_{12},I_{14}$}).
\end{align*}
And for $\gamma(\Lambda^2A)=\sum b_i$, we have 
\[b_4=14 I_2 -2736 z^2.\]
By Lemma \ref{lem:adams-ops}, we conclude that each of $a_4,b_4,a_{12},a_{16},a_{20},a_{24},a_{36}$ is the lowest-degree nonzero term of some element in the image of $\gamma\otimes\mathbb Q$. Combining the corresponding power series for $a_4,b_4$, we obtain $I_2,z^2$ as lowest-degree nonzero terms, and then inductively we obtain $I_6,I_8,I_{10},I_{12},I_{14},I_{18}$ as lowest-degree nonzero terms. This proves the image of $\gamma\otimes\mathbb Q$ is dense by Lemma \ref{lem:densely-generate}.

\bibliographystyle{amsalpha}
\bibliography{pl}
\end{document}